\documentclass[11pt]{amsart}
\usepackage[T1]{fontenc}
\usepackage{amsmath}
\usepackage{amsthm}
\usepackage{amsfonts}
\usepackage{amssymb}
\usepackage{graphicx}
\usepackage[english]{babel}
\usepackage[margin=1in]{geometry}
\usepackage{indentfirst}
\usepackage{enumerate}
\usepackage{thm-restate}
\usepackage{subcaption}
\usepackage{xcolor}
\usepackage[percent]{overpic}

\theoremstyle{plain}
\newtheorem{theorem}{Theorem}[section]
\newtheorem{corollary}[theorem]{Corollary}
\newtheorem{lemma}[theorem]{Lemma}
\newtheorem{proposition}[theorem]{Proposition}

\newtheorem*{theorem*}{Theorem}
\newtheorem*{corollary*}{Corollary}

\theoremstyle{definition}
\newtheorem{definition}[theorem]{Definition}

\newtheorem{fact}[theorem]{Fact}

\theoremstyle{remark}
\newtheorem{remark}[theorem]{Remark}
\newtheorem*{remark*}{Remark}

\makeatletter
\@ifundefined{acks}{
    \newenvironment{acks}[1][Acknowledgments]
        {\section*{#1}}
        {}
}{}
\makeatother

\definecolor{linkcol}{RGB}{0,70,160}
\definecolor{citecol}{RGB}{0,120,70}
\definecolor{urlcol}{RGB}{150,60,0}

\usepackage[
    colorlinks=true,
    linkcolor=linkcol,
    citecolor=citecol,
    urlcolor=urlcol,
    breaklinks=true,
    bookmarksopen=true,
    bookmarksnumbered=true
]{hyperref}

\newcommand{\dary}{\mathcal T_h}
\newcommand{\dreg}{\widehat {\mathcal T}_h}
\newcommand{\pc}{p_{\mathsf{c}}}
\newcommand{\dist}{\mathrm{dist}}

\title{The $q<1$ Random-Cluster Model on Wired Trees: Uniqueness and Negative Dependence}
\author{Heehyun Park}
\address{Department of Computer Science and Engineering, Pennsylvania State University}
\email{hbp5148@psu.edu}

\begin{document}

\begin{abstract}
The random-cluster model with cluster weight $0<q<1$ is expected to exhibit negative dependence, but without FKG even pairwise negative correlation remains open on general graphs. We study the model on the infinite $\Delta$-regular tree with wired boundary conditions. Write $\widehat p:=p/(p+q(1-p))$ and $p_{\mathsf c}:=q/(\Delta+q-2)$. Classical results identify the product wired state for $p\le p_{\mathsf c}$ and construct a percolative all-wired limit for $p>p_{\mathsf c}$. We prove that the supercritical wired DLR specification has a unique Gibbs measure, namely this all-wired limit. Consequently, the wired DLR phase diagram is complete: the unique measure is Bernoulli bond percolation with parameter $\widehat p$ for $p\le p_{\mathsf c}$, while it percolates for $p>p_{\mathsf c}$.

We also establish negative dependence across wired branches. On a finite wired tree, the vector of branch-connectivity indicators satisfies conditional negative association under positive external fields (CNA+). Hence bounded increasing observables supported on disjoint collections of branches incident to a common vertex have nonpositive covariance. The same inequality holds in the unique infinite-volume wired measure for every $p$, with equality for $p\le p_{\mathsf c}$.
\end{abstract}

\maketitle
\section{Introduction}

The random-cluster model, introduced by Fortuin and Kasteleyn, is a probability measure on random subgraphs that unifies independent percolation with the ferromagnetic Ising and Potts models~\cite{fortuinKasteleyn1972random,grimmett2006randomcluster}. On a finite graph $G=(V,E)$ with parameters $p\in(0,1)$ and $q>0$, it assigns to each configuration $\omega\in\{0,1\}^E$ the probability
\[
    \mu_{G,p,q}(\omega)
    =
    \frac{
        p^{|\omega|}(1-p)^{|E|-|\omega|}q^{\kappa(\omega)}
    }{
        Z_{G,p,q}
    },
\]
where $Z_{G,p,q}$ is the partition function, $|\omega|$ is the number of open edges, and $\kappa(\omega)$ is the number of connected components of the open subgraph. When $q=1$, this is ordinary independent bond percolation; when $q\ge2$ is an integer, it is coupled to the ferromagnetic $q$-state Potts model through the Fortuin--Kasteleyn--Edwards--Sokal representation~\cite{edwardsSokal1988}. The global dependence created by the cluster-counting term places the model at the intersection of probability, statistical physics, combinatorics, and algorithms. In particular, phase transitions and correlation decay on infinite trees have played an important role in the design and analysis of sampling algorithms on sparse random graphs~\cite{blancaEtAl2020sampling,blancaGheissari2021treeUniqueness,blanca2023sampling}.

The behavior of the model changes qualitatively as $q$ crosses one. For $q\ge1$, the FKG inequality gives positive association and a monotone ordering of boundary conditions~\cite{fortuinKasteleynGinibre1971,grimmett2006randomcluster}; these are among the principal tools in the finite- and infinite-volume theory. For $0<q<1$, by contrast, the dependence is expected to be negative. Negative association, in the sense introduced by Joag-Dev and Proschan~\cite{joagDevProschan1983}, and its conditional and external-field variants form a hierarchy of substantially stronger properties than pairwise negative correlation; see~\cite{borceaBrandenLiggett2009,kahnNeiman2010negative,pemantle2000towards} for foundational developments. For the random-cluster model with $0<q<1$, even pairwise negative correlation between two distinct edges remains open on general graphs~\cite{grimmettWinkler2004negative,pemantle2000towards}. Kahn and Neiman conjectured that every random-cluster measure with $0<q<1$ is NA+~\cite{kahnNeiman2010negative}. Our result does not establish this conjecture, but proves a branch-separated negative-association for the wired tree measure. Recent work on lattices improves the available stochastic comparisons for $q<1$ and proves uniqueness of infinite-volume measures in corresponding subcritical and supercritical parameter ranges~\cite{beffaraFaipeurOke2025}; these results concern lattice phase-transition bounds rather than the exact wired-tree DLR classification considered in this paper.

There are, however, important positive results on special graph classes and for related algebraic notions. Semple and Welsh, and independently the Potts--Rayleigh framework developed by Wagner, established pairwise negative-correlation inequalities under arbitrary positive edge weights for series--parallel graphs and more general classes obtained through matroid sums~\cite{sempleWelsh2008negative,wagner2008mason}. On a different front, Br{\"a}nd{\'e}n and Huh proved that the homogenized multivariate Tutte polynomial is Lorentzian when $0<q\le1$~\cite{brandenHuh2020lorentzian}, while strong log-concavity has led to an FPRAS for the partition function of the matroid random-cluster model when $0<q<1$, as well as efficient sampling of random spanning forests~\cite{anariEtAl2024logConcave}. These results reveal substantial algebraic structure below one, but they do not by themselves establish negative association for arbitrary disjoint edge sets.

Trees are a natural setting in which to investigate both the phase structure and the conjectural negative dependence. With free boundary conditions, the random-cluster measure on a finite tree is simply a product measure. Wiring the leaves together changes the geometry fundamentally: after contraction of the wired boundary, distinct branches become parallel routes to a common vertex, producing cycles and genuine dependence. Finite wired trees are series--parallel, so Wagner's Potts--Rayleigh theorem gives pairwise negative correlation between individual edges under arbitrary positive edge activities~\cite{wagner2008mason}; see also~\cite{sempleWelsh2008negative}. It does not, however, control arbitrary increasing observables supported on entire branch collections.

A closely related limiting model is the arboreal gas, obtained from the random-cluster model as $q\to0$ under the scaling $p=\beta q$. On regular trees with wired boundary conditions, Easo proved existence and exhaustion independence of the weak limit, while Ray and Xiao independently obtained a detailed description for the standard wired regular-tree exhaustion. In both works, the subcritical and critical limit is Bernoulli bond percolation, whereas the supercritical phase has additional infinite forest structure~\cite{easo2022wired,rayXiao2022forests}. These results provide a structural precursor to the same phase dichotomy at fixed $q\in(0,1)$. Our setting addresses uniqueness for the wired DLR specification, rather than only the wired finite-volume limit.

The wired-tree phase diagram goes back to H{\"a}ggstr{\"o}m's foundational work~\cite{haggstrom1996homogeneous}. Grimmett's analysis of regular trees constructs the all-wired limit for every $q>0$, computes the tree critical point and percolation probability, proves that the wired limit satisfies the wired DLR specification, and identifies the unique product wired state whenever the percolation probability vanishes~\cite{grimmett2006randomcluster}. The book works explicitly with the binary tree and notes that the corresponding statements hold for general $d$-ary trees. Its subsequent analysis of the remaining uniqueness question assumes $q\ge1$. For $q>2$, H{\"a}ggstr{\"o}m identified a non-uniqueness interval between two tree thresholds and conjectured that uniqueness returns above the upper threshold. Uniqueness sufficiently deep in the low-temperature regime was subsequently proved in~\cite{grimmettJanson2005branching,jonasson1999general}, and the conjectured transition was recently established throughout the full range $q>1$ in~\cite{blanca2026uniqueness}. Tree recursions and fixed-point analysis below the lower uniqueness threshold were also developed in connection with sampling on random regular and unbounded-degree random graphs~\cite{blancaEtAl2020sampling,blanca2023sampling}. The message parametrization and the sufficiently wired proof architecture are adapted from earlier joint work of the author~\cite{blanca2026uniqueness}. The genuinely new $q<1$ inputs are the amplification argument of Lemma~\ref{lem:density-amplification} and the uniform branch-survival argument of Lemma~\ref{lem:uniform-branch-survival}, which replace the FKG and stochastic-domination tools available for $q>1$. The remaining $q<1$ problem is therefore supercritical uniqueness: FKG comparison is unavailable, and one must rule out wired DLR measures other than the all-wired limit.

Our first contribution resolves this supercritical uniqueness problem and combines it with the subcritical--critical description to give the full wired DLR phase diagram for $0<q<1$. We also establish a corresponding negative-dependence theorem. Fix $\Delta\ge3$, put $d=\Delta-1$, and define
\[
    \widehat p
    :=
    \frac{p}{p+q(1-p)},
    \qquad
    \pc
    :=
    \frac{q}{d+q-1}
    =
    \frac{q}{\Delta+q-2}.
\]
Equivalently, $\pc$ is characterized by $d\widehat p=1$.

\begin{theorem*}[Supercritical wired-DLR uniqueness, informal]
Let $0<q<1$, $\Delta\ge3$, and $p>\pc$. The wired DLR specification on the infinite $\Delta$-regular tree admits a unique infinite-volume Gibbs measure, namely the local limit $\mu^*$ of the random-cluster measures on finite balls with all-wired boundary conditions.
\end{theorem*}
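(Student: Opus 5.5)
We argue through the tree recursion for branch-connectivity messages, adapting the sufficiently wired architecture of~\cite{blanca2026uniqueness} to $q<1$. Let $\mu$ be any Gibbs measure for the wired DLR specification on the $\Delta$-regular tree. For a vertex $v$ and a depth $n$, write $T_v^n$ for the subtree of depth $n$ below $v$. Conditionally on the configuration outside the ball $B_n$, the law inside $B_n$ is a finite random-cluster measure with an induced boundary partition. Under the wired specification this partition refines the all-wired one, and clusters touching the boundary may additionally be wired through infinity. The first step is to encode every such boundary condition by a message per vertex: a vector $m_v\in[0,1]$ giving the conditional probability that $v$ connects, inside its branch, to the wired part of the boundary. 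These messages obey the standard recursion. A child's message $m$ is transformed by the edge into $\widehat p\, m$-type quantities after contraction, and the $d$ children combine through the parallel-composition rule that follows from the partition function with weight $q$. We then check two facts. The all-wired boundary gives the maximal message $x^*>0$, where $x^*$ is the positive fixed point of the recursion $F$; this fixed point exists exactly when $d\widehat p>1$, i.e.\ $p>\pc$. Moreover, $\mu^*$ is the measure generated by the constant message $x^*$.

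The second step is to show that under any wired DLR measure, the boundary condition seen from $B_n$ is \emph{sufficiently wired} with probability tending to one. Sufficiently wired means that a positive density of boundary vertices at depth $n$ carry messages at least some $\varepsilon>0$. This is where the two $q<1$ inputs enter. Lemma~\ref{lem:uniform-branch-survival} should give a uniform lower bound, independent of the boundary condition far away, on the probability that a branch rooted at a given vertex reaches the wired boundary. Without FKG, we obtain it by direct computation on the finite wired tree. The idea is that conditioning on the exterior can only reweight branches by factors bounded in terms of $q^{\pm1}$, because contracting a wired set changes the cluster count by at most a bounded amount per branch. Lemma~\ref{lem:density-amplification} then upgrades a small positive density of good messages into messages close to $x^*$ after finitely many generations. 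This uses the fact that for $d\widehat p>1$ the only attracting fixed point of $F$ on $(0,1]$ is $x^*$, while $0$ is repelling.

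The third step is contraction. Once all messages at depth $n$ lie in a neighborhood $[x^*-\delta,x^*]$, we iterate $F$ up to the root. We show that the error decays geometrically, using $|F'(x^*)|<1$ on that neighborhood, or a suitable potential-function metric if the derivative alone is insufficient. Hence the conditional law of any local event given the exterior converges to its $\mu^*$ probability, uniformly on the good event. Averaging over the exterior and using that the bad event has vanishing $\mu$-probability yields $\mu(A)=\mu^*(A)$ for every cylinder event $A$, and therefore $\mu=\mu^*$.

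The main obstacle is the second step. With $q<1$ we cannot dominate $\mu$ from below by a wired-type measure, so a priori a DLR measure could concentrate on boundary conditions whose wiring is extremely sparse, making the messages near zero. Our first approach is a block-by-block argument. Each branch independently, given the rest, has conditional survival probability at least $c(p,q,\Delta)>0$, a bound that holds because the weight $q^{\kappa}$ distorts the product measure by at most $q^{-1}$ per boundary merger. A law-of-large-numbers or second-moment bound across the $d^n$ disjoint branches then produces positive density, which we feed into the amplification lemma.
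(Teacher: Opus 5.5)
Your overall architecture is the same as the paper's: messages, a sufficiently wired exterior-induced boundary, amplification away from the trivial fixed point, contraction near $x^*$, and averaging through the DLR identity. But the step you flag as the main obstacle is not resolved, and it is the heart of the proof.

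\textbf{The missing uniform bound.} What is needed is that for \emph{every} wired DLR measure $\nu$ and every vertex $u$, $\nu(\mathcal B_u\mid\mathcal F_u^{\mathrm{ext}})\ge\theta(p,q,d)>0$. Here $\mathcal B_u$ is the infinite-volume event that $u$ reaches infinity inside its own descendant branch.

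This cannot come from a direct computation on a finite wired tree with an arbitrary far-away boundary condition. If the deep boundary inside the branch is free or very sparsely wired, the connection probability is zero or tiny. Excluding such boundaries under $\nu$ is exactly what the lemma must accomplish, so that route is circular.

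A bounded $q^{\pm1}$ reweighting does not rescue it either:
\begin{enumerate}[(i)]
\item A single factor $q^{\pm1}$ only relates the root-wired and root-free versions of a branch law that is itself unknown under $\nu$.
\item Relative to the free product law $\operatorname{Ber}(\widehat p)$, the distortion is a factor $q$ per additional connection to the wired class, and the number of these is unbounded.
\item What the specification does give uniformly is that each single-edge conditional open probability lies in $\{p,\widehat p\}$, so $\nu$ dominates $\operatorname{Ber}(p)$. But $\pc<1/d$ exactly because $q<1$, and for $p\in(\pc,1/d]$ this comparison is subcritical.
\end{enumerate}

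\textbf{The missing idea.} The paper uses a dichotomy. An edge drops from $\widehat p$ to $p$ only when its lower endpoint is already blue, i.e.\ connected to infinity in its own branch. At a vertex with no blue children, Lemma~\ref{lem:conditional-star-factorization} makes the child edges i.i.d.\ $\operatorname{Ber}(\widehat p)$, so an exploration is supercritical. When blue children exist, one must connect to one of them instead.

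The paper implements this with independent $\operatorname{Ber}(\varepsilon)$ probe marks, where $d(1-\varepsilon)\widehat p>1$. The exploration tests $\mathcal B_w$ at probe children without revealing the probe edge. It shows that on $\mathcal B_u^c$ it selects a probe edge to a blue child with conditional probability at least $\delta$. It then applies the single-edge odds bound $p/(1-p)$ to that unrevealed edge, giving $a_u\ge\frac{p}{1-p}\delta(1-a_u)$. Once you have this bound conditional on $\mathcal F_x^{\mathrm{ext}}$, your law-of-large-numbers step is fine: sequential coupling gives domination by i.i.d.\ $\operatorname{Ber}(\theta)$ on $\partial\dreg$.

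\textbf{Three smaller points.}
\begin{enumerate}[(i)]
\item Amplification does not follow from the trivial fixed point being repelling for the homogeneous map. The inputs are inhomogeneous, and a positive density of wired leaves can be spread so that low vertices see at most one good child. The paper uses the potential $a(v)=\min\{\log f(v),\eta\}/\eta$ and the bound $\log\Phi(e^z)\ge\lambda\min\{z,\eta\}$ with $\lambda\in(1/d,\widehat p)$. With subadditivity of $x\mapsto x^s$ and $\lambda d^s=1$, this gives $a(v)\ge\alpha(v)^s$.
\item The all-wired finite-volume messages are $\phi^{(k)}(\infty)>x^*$, so the contraction must be two-sided. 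The paper sandwiches both message fields between $\phi^{(t)}(1+\varepsilon)$ and $\phi^{(t)}(K)$.
\item Your argument shows only that every wired DLR measure equals $\mu^*$. Existence, i.e.\ that $\mu^*$ itself satisfies the wired DLR condition, does not follow from local convergence, because the induced partition depends on connections to infinity. The paper takes this from Grimmett.
\end{enumerate}
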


The existence of $\mu^*$ and the positivity of its percolation probability are known~\cite{grimmett2006randomcluster}. The new result is uniqueness: no additional wired DLR state appears above the transition when $0<q<1$. Combining this with the classical description at and below the critical point gives the full phase diagram.

\begin{corollary*}[Wired DLR phase diagram, informal]
Let $0<q<1$, $\Delta\ge3$, and $p\in(0,1)$. The wired DLR specification on the infinite $\Delta$-regular tree admits a unique infinite-volume Gibbs measure. Moreover:
\begin{enumerate}[(i)]
    \item if $p\le\pc$, this measure is the product measure
    \[
        \bigotimes_{e\in E(\mathcal T_\infty)}
        \operatorname{Ber}(\widehat p);
    \]
    \item if $p>\pc$, it is the local limit $\mu^*$ of the random-cluster measures on finite balls with all-wired boundary conditions, and
    \[
        \mu^*(\rho\leftrightarrow\infty)>0.
    \]
\end{enumerate}
\end{corollary*}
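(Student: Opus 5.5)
The corollary follows by splitting on whether $p\le\pc$ or $p>\pc$, and combining the classical subcritical--critical results with the supercritical uniqueness theorem. No new probabilistic input is needed beyond the theorem above. The only care required is to check that the objects in the two regimes agree with the definitions used for the wired DLR specification.

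\emph{Case $p\le\pc$.} We rely on the classical analysis recalled in the introduction, as in~\cite{grimmett2006randomcluster}, adapted from the binary tree to general $d=\Delta-1$. It computes the percolation probability $\theta(p)=\mu^*(\rho\leftrightarrow\infty)$ of the all-wired limit as the largest fixed point of the tree recursion for the wired-branch message. We first verify that this recursion has only the trivial fixed point when $d\widehat p\le1$. The map is concave or of branching type, and its derivative at $0$ equals $d\widehat p$, so $\theta(p)=0$ exactly when $p\le\pc$.

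Next we invoke the classical statement that, whenever the percolation probability vanishes, every wired DLR measure equals the product measure. The reasoning behind it is as follows. Each finite-volume wired conditional law, given an outside configuration with no infinite cluster connecting boundary points, factorizes over edges. Each edge is then open with the single-edge wired probability $p/(p+q(1-p))=\widehat p$. Consequently any Gibbs measure under which infinite clusters are absent almost surely must be $\bigotimes_e\operatorname{Ber}(\widehat p)$.

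It remains to rule out a Gibbs measure with infinite clusters when $p\le\pc$. This also follows from the classical argument: a wired DLR measure with infinite clusters would force a nontrivial fixed point of the recursion, which does not exist. This proves (i), including uniqueness.

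\emph{Case $p>\pc$.} Here the supercritical uniqueness theorem stated above applies directly: the wired DLR specification has exactly one Gibbs measure, namely $\mu^*$. That $\mu^*$ satisfies the wired DLR equations and that $\mu^*(\rho\leftrightarrow\infty)>0$ are the classical facts of~\cite{grimmett2006randomcluster}. Positivity holds because $d\widehat p>1$ makes the derivative of the recursion at $0$ exceed $1$, which produces a positive fixed point. This gives (ii).

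The only point that needs checking is the boundary case $p=\pc$. There $d\widehat p=1$, and we must confirm that the recursion has no positive fixed point. This requires strict concavity of the recursion map rather than just the derivative test. We would establish it by writing the message recursion explicitly in terms of $\widehat p$ and checking that its second derivative is negative on $(0,1]$ for $0<q<1$.
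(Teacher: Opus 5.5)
Your part (ii) is what the paper does. There the corollary is simply the supercritical uniqueness theorem combined with the DLR property of the wired limit and the percolation bound coming from the attracting fixed point $x^*>1$. The gap is in part (i), at the step you yourself flag as the crux: ruling out a wired DLR measure with infinite clusters when $p\le\pc$.

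Your justification is that such a measure ``would force a nontrivial fixed point of the recursion.'' This does not hold. An arbitrary wired DLR measure need not be automorphism-invariant, and the boundary conditions it induces on finite balls are random and nonhomogeneous. Nothing forces it to produce a fixed point of the homogeneous scalar map $\phi$; the absence of fixed points of $\phi$ in $(1,\infty)$ tells you only about the all-wired limit. The paper explicitly warns that scalar fixed points are not in correspondence with DLR measures. The classical way to transfer $\theta(p)=0$ from the wired limit to every DLR measure is that the wired measure is stochastically maximal. That rests on FKG, which is unavailable for $q<1$ and is exactly the obstruction the paper is built around.

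The missing idea is a one-line domination that uses $q<1$. Under any wired DLR measure $\nu$, the conditional probability that an edge $e$ is open, given all other edges, is either $p$ or $\widehat p$. Which value applies depends on whether its endpoints are already joined through the exterior; on a tree this can only happen through infinity. Because $q<1$, both values are at most $\widehat p$. Revealing the edges of any finite set sequentially then gives $\nu\preceq\bigotimes_e\operatorname{Ber}(\widehat p)$.

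When $d\widehat p\le1$, this product measure has no infinite cluster: the cluster of a vertex is a Galton--Watson tree with mean offspring $d\widehat p\le1$, which covers the critical case too. So $\nu$ has no infinite cluster either. Every one-edge conditional probability is then exactly $\widehat p$, and $\nu$ is the product measure; the same reasoning shows this product measure satisfies the specification.

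This handles all $p\le\pc$ at once. You therefore need neither the computation of $\theta(p)$ for the all-wired limit nor the planned concavity check at $p=\pc$. That check concerns only the wired limit anyway, and the boundary case is already covered by the monotonicity of $\psi$, whose range is the open interval $(\pc,1)$.
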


We include a direct proof of part~(i) in our notation, while Proposition~\ref{prop:wired-limit-percolates} records an explicit lower bound for the percolation probability in part~(ii). This uniqueness concerns the wired-at-infinity DLR specification. For every $p$, the free finite-volume measures on the tree converge to the product state $\bigotimes_e\operatorname{Ber}(\widehat p)$; when $p>p_{\mathsf c}$, this state percolates and is distinct from, and does not satisfy, the wired DLR state, so strong uniqueness across boundary conventions fails.

Our second main result identifies a robust form of negative dependence under this unique measure. Let $T$ be a finite tree whose leaves belong to one wired boundary class, and fix an internal vertex $x$. After contracting the wired boundary, the components incident to $x$ form edge-disjoint two-terminal branches $H_1,\ldots,H_m$ in parallel between $x$ and the wired vertex. We prove substantially more than pairwise negative correlation between their connection events.

\begin{theorem*}[Branch negative association, informal]
Let $0<q<1$. If $I,J\subseteq[m]$ are disjoint and $F$ and $G$ are bounded increasing observables supported, respectively, on the edge collections
\[
    \bigcup_{i\in I}E(H_i)
    \qquad\text{and}\qquad
    \bigcup_{j\in J}E(H_j),
\]
then
\[
    \operatorname{Cov}_{\mu_T^{\mathbf 1}}(F,G)
    \le0.
\]
The same inequality holds for bounded increasing cylinder observables supported on disjoint collections of branches incident to a common vertex under the unique infinite-volume wired Gibbs measure. At and below $\pc$, the covariance is zero.
\end{theorem*}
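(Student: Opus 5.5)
The plan has three stages. First I reduce the finite-tree statement to a statement about the branch-connectivity vector. Then I prove negative association for that vector via the stronger CNA+ property. Finally I pass to infinite volume using the uniqueness theorem.

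\textbf{Reduction to connectivity indicators.} After contracting the wired boundary to a single vertex $w$, the graph is the parallel composition of two-terminal graphs $H_1,\dots,H_m$ between $x$ and $w$. Let $\sigma_i=\mathbf 1\{x\leftrightarrow w \text{ inside } H_i\}$. The number of components splits as
\[
\kappa(\omega)=\sum_i \kappa_i(\omega|_{H_i}) - (m-1) + \mathbf 1\{\textstyle\sum_i\sigma_i=0\},
\]
where $\kappa_i$ counts the components of $H_i$ with $x$ and $w$ treated as distinct. Hence
\[
\mu_T^{\mathbf 1}(\omega)\propto \prod_i w_i(\omega|_{H_i})\cdot q^{\mathbf 1\{\sigma=0\}}.
\]
Conditionally on $\sigma=(\sigma_1,\dots,\sigma_m)$, the restrictions $\omega|_{H_i}$ are independent. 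Each has the law $\nu_i(\cdot\mid\sigma_i)$, where $\nu_i$ is the random-cluster measure on $H_i$ with free terminals.

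Next I need that $\nu_i(\cdot\mid\sigma_i=1)$ stochastically dominates $\nu_i(\cdot\mid\sigma_i=0)$. For a tree branch this is a product-type statement, so the domination should follow from a direct coupling; Holley's criterion is not available here since $q<1$. Granting it, for increasing $F$ the conditional mean $f(\sigma_I)=E[F\mid\sigma]$ is an increasing function of $\sigma_I$ alone, and similarly $g(\sigma_J)$ for $G$. By conditional independence,
\[
\operatorname{Cov}(F,G)=\operatorname{Cov}(f(\sigma_I),g(\sigma_J)).
\]
So it suffices to show that $\sigma$ is negatively associated.

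\textbf{CNA+ for $\sigma$.} The law of $\sigma$ is $\pi(\sigma)\propto \prod_i a_i^{\sigma_i}\, q^{\mathbf 1\{\sigma=0\}}$ for some $a_i>0$. Its generating polynomial is
\[
P(z)=\prod_i(1+a_iz_i)-(1-q).
\]
I would show directly that this class is closed under conditioning (pinning coordinates to $0$ or $1$) and under positive external fields ($a_i\mapsto\lambda_ia_i$). Pinning some $\sigma_i=1$ kills the indicator, leaving a product measure. Pinning $\sigma_i=0$ keeps the same form on fewer coordinates.

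It then remains to prove NA for each such measure, and I would argue by induction on $m$ using a Feder--Mihail style argument. The base case is pairwise negative correlation:
\[
\pi(\sigma_i=\sigma_j=1)\pi(\text{neither}) \le \pi(\sigma_i=1,\sigma_j=0)\,\pi(\sigma_i=0,\sigma_j=1).
\]
For the other coordinates summed out, the difference is proportional to $-(1-q)\cdot(\text{positive})$, so it is $\le 0$ because $q<1$. Pairwise negative correlation together with closure under pinning and fields should give NA for this class, via Feder--Mihail.

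Alternatively, I could check that $P$ is real stable, or at least Rayleigh, and invoke the Borcea--Brändén--Liggett result on strongly Rayleigh measures. Either route is where I expect the main obstacle: the Feder--Mihail step needs the pairwise inequality to hold uniformly across all pinnings and fields. I would try the Rayleigh verification first, since the two-variable computation $P\,\partial_{ij}P\le\partial_iP\,\partial_jP$ reduces to $(1-q)\ge0$ times positive products.

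\textbf{Infinite volume.} By the uniqueness theorem, $\mu^*$ is the local limit of $\mu_{T_n}^{\mathbf 1}$ on wired balls. For bounded increasing cylinder $F,G$ supported on disjoint branch collections at $x$, the finite-volume covariances are $\le0$ and converge, so the limiting covariance is $\le0$. For $p\le\pc$ the measure is a product measure and the supports are disjoint, so the covariance is $0$.
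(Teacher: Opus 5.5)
Your architecture is the paper's: condition on the connectivity vector $\sigma$, use conditional independence across branches and the law of total covariance to reduce to $\operatorname{Cov}(f(\sigma_I),g(\sigma_J))$, then pass to $\mu^*$ by local convergence (product measure for $p\le\pc$). However, two load-bearing steps are missing.

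\textbf{First gap: the domination lemma.} You only grant $\nu_i(\cdot\mid\sigma_i=1)\succeq\nu_i(\cdot\mid\sigma_i=0)$, and your reason for expecting it is wrong. Once the leaves of a component are identified with $w$, the branch is no longer a tree, and the conditional laws are not product measures. For example, if the neighbour $y$ of $x$ has two leaf children, the branch is the edge $\{x,y\}$ in series with two parallel $y$--$w$ edges. Given $\sigma_i=1$, those two edges are Bernoulli$(p)$ variables conditioned on at least one being open, which is not a product law. Since FKG and Holley are unavailable for $q<1$, this needs its own proof. The paper's Lemma~\ref{lem:terminal-conditioning-order} gives one by induction over the series--parallel decomposition. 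Given the sub-network indicators $c$, the pieces are independent with laws $\mu_{H_i}^{c_i}$. In a series composition, global connection is the single state $(1,\dots,1)$, and global disconnection is a mixture of products each containing a $\mu^0$ factor. In a parallel composition the roles of $(0,\dots,0)$ and its complement are reversed. (A smaller slip: your component-count identity holds when $x$ and $w$ are identified inside each $H_i$, not when they are treated as distinct. The discrepancy $q^{|\sigma|-m}$ is absorbed into the $a_i$, so it does no harm.)

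\textbf{Second gap: negative association of $\sigma$.} This is the more serious one, and neither of your routes works.
\begin{itemize}
\item Feder--Mihail's theorem is stated for homogeneous measures and uses homogeneity essentially. Your $\pi$ is not homogeneous.
\item The strongly Rayleigh route is false for $m\ge3$. With all $a_i=a$, the diagonal polynomial $P(z,\dots,z)=(1+az)^m-(1-q)$ has the nonreal roots $1+az=(1-q)^{1/m}e^{2\pi\mathrm{i}j/m}$. So $P$ is not real stable; this is the paper's remark after Proposition~\ref{prop:parallel-branch-cna}.
\item Your two-variable computation checks $P\,\partial_{ij}P\le\partial_iP\,\partial_jP$ only on the positive orthant. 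That is the Rayleigh property, which is not the hypothesis of the Borcea--Br\"and\'en--Liggett NA theorem.
\end{itemize}
The idea you are missing, as the paper uses it, is that $\pi$ is an external-field tilt of an exchangeable law. Its normalized rank sequence $(1,1,q,\dots,q^{m-1})$ is log-concave because $q<1$, so Pemantle's exchangeable criterion (ULC implies CNA+) applies.

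For this specific law there is also a direct computation. Let $\pi_0$ be the product measure with $\pi_0(\sigma_i=1)=a_i/(1+a_i)$. Then $\pi=\pi_0\bigl(1-(1-q)\mathbf 1\{\sigma=0\}\bigr)/Z$ with $Z=1-(1-q)\pi_0(0)$. For increasing $f(\sigma_I)$ and $g(\sigma_J)$ with $I\cap J=\varnothing$, one gets
\[
\operatorname{Cov}_\pi(f,g)=-\frac{(1-q)\,\pi_0(0)}{Z^2}\bigl(\mathbb E_{\pi_0}f-f(0)\bigr)\bigl(\mathbb E_{\pi_0}g-g(0)\bigr)\le0 .
\]
This gives NA, which is all that Theorem~\ref{thm:branch-separated-na} uses.
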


See Theorem~\ref{thm:branch-separated-na} and Corollary~\ref{cor:infinite-branch-na}. For a finite wired tree, the full vector of branch-connection indicators satisfies CNA+ under coordinate conditioning and positive external fields. The infinite-volume conclusion is the branch-separated covariance inequality, rather than CNA+ for an infinite branch vector. The separation condition is essential: the result does not cover arbitrary disjoint supports lying in the same branch.

For uniqueness, we use an exact message, or boundary law, attached to each rooted subtree, following the random-cluster tree recursions in~\cite{blancaEtAl2020sampling,blanca2023sampling}. Homogeneous messages evolve under a scalar map with the trivial fixed point $1$. At and below $\pc$, this is the only fixed point, and Bernoulli domination identifies the unique wired DLR measure with the product state. Above $\pc$, the fixed point $1$ is unstable and a unique attracting fixed point $x^*>1$ appears. Homogeneous attractivity alone is not enough for Gibbs uniqueness, since the boundary induced by an arbitrary exterior configuration is random and nonhomogeneous. We therefore first show that a positive density of wired leaves amplifies the messages away from $1$ and then prove contraction once the messages enter a fixed positive basin around $x^*$. This gives weak spatial mixing under positive-density wired boundary conditions. The remaining bridge is the uniform branch-survival estimate: every wired DLR measure induces such a positive-density boundary condition on distant balls, so its local marginals agree with those of the all-wired limit. The proof follows the sufficiently wired architecture for $q>1$ in~\cite{blanca2026uniqueness}, while the amplification and branch-survival arguments replace the FKG comparisons.

For negative association, we first analyze the indicators that the parallel branches connect their two terminals. Their joint law is a positive-external-field tilt of an exchangeable measure whose rank sequence is ultra-log-concave for $0<q<1$. Pemantle's exchangeable-measure criterion therefore gives CNA+~\cite{pemantle2000towards}; see also~\cite{kahnNeiman2010negative} for the role of exchangeability and log-concavity in negative dependence. Conditional on the connection vector, configurations in distinct branches are independent. A separate series--parallel argument shows that, within each branch, conditioning on terminal connection stochastically dominates conditioning on terminal disconnection. The law of total covariance combines this stochastic order with the indicator-level CNA+ property to prove the branch-separated inequality for arbitrary increasing observables. Local convergence and uniqueness then pass the covariance inequality to the infinite-volume wired measure. This argument does not rely on real stability, and the general strong-Rayleigh theory~\cite{borceaBrandenLiggett2009} does not directly cover these wired random-cluster laws.

The paper is organized as follows. Section~\ref{sec:model} introduces the finite-volume model, the wired DLR specification, and the negative-dependence terminology. Section~\ref{sec:message-fixed-points} derives the message recursion and classifies its fixed points. Section~\ref{sec:wsm} proves weak spatial mixing under positive-density wired boundary conditions. Section~\ref{sec:uniqueness} proves uniqueness, first above $\pc$ through boundary-law contraction and then at and below $\pc$ by identifying the Bernoulli product state. Section~\ref{sec:negative-dependence} proves CNA+ for finite branch-connectivity vectors, establishes finite branch-separated negative association, and passes the latter covariance inequality to the infinite-volume wired measure.

\section{Preliminaries}
\label{sec:model}

In this section we introduce the finite-volume random-cluster model, boundary conditions on finite rooted trees, and the wired infinite-volume specification used throughout the paper. We also record the notions of spatial mixing and negative dependence that will be used later.

\subsection{The finite-volume random-cluster model}

Let $G=(V,E)$ be a finite graph. A configuration is an element $\omega\in\{0,1\}^E$, where an edge $e$ is called open when $\omega(e)=1$ and closed when $\omega(e)=0$. We write
\[
    |\omega|
    :=
    \sum_{e\in E}\omega(e)
\]
for the number of open edges.

For parameters $p\in(0,1)$ and $q>0$, the random-cluster weight of $\omega$ is 
\[
    p^{|\omega|}(1-p)^{|E|-|\omega|}q^{\kappa(\omega)},
\] where $\kappa(\omega)$ is the number of connected components of the open subgraph $(V,\{e:\omega(e)=1\})$. The random-cluster measure on $G$ is
\[
    \mu_{G,p,q}(\omega)
    :=
    \frac{1}{Z_{G,p,q}}
    p^{|\omega|}
    (1-p)^{|E|-|\omega|}
    q^{\kappa(\omega)},
    \qquad
    \omega\in\{0,1\}^E,
\]
where
\[
    Z_{G,p,q}
    :=
    \sum_{\omega\in\{0,1\}^E}
    p^{|\omega|}
    (1-p)^{|E|-|\omega|}
    q^{\kappa(\omega)}
\]
is the partition function.

Fix a distinguished boundary set $\Lambda\subseteq V$. A boundary condition on $\Lambda$ is a partition $\xi$ of $\Lambda$. Given $\omega$, declare all vertices in each part of $\xi$ to be connected; equivalently, identify them as a single vertex. The free boundary condition, denoted by $\mathbf 0$, is the partition into singletons, so that $\mu_{G,p,q}^{\mathbf 0}=\mu_{G,p,q}$. The all-wired boundary condition, denoted by $\mathbf 1$, consists of one part containing every vertex of $\Lambda$. Let $\kappa^\xi(\omega)$ be the resulting number of open connected components.

The random-cluster measure on $G$ with boundary condition $\xi$ is
\[
    \mu_{G,p,q}^\xi(\omega)
    :=
    \frac{1}{Z_{G,p,q}^\xi}
    p^{|\omega|}
    (1-p)^{|E|-|\omega|}
    q^{\kappa^\xi(\omega)},
    \qquad
    \omega\in\{0,1\}^E,
\]
where
\[
    Z_{G,p,q}^\xi
    :=
    \sum_{\omega\in\{0,1\}^E}
    p^{|\omega|}
    (1-p)^{|E|-|\omega|}
    q^{\kappa^\xi(\omega)}
\]
is the partition function.

Throughout the paper, when the underlying graph and the parameters $p,q$ are clear, they will be omitted from the notation.

\begin{remark}
For $q\ge1$, the random-cluster model satisfies the FKG inequality and is monotone in the boundary condition. Neither property is available in the regime $0<q<1$. The supercritical uniqueness proof therefore obtains the required boundary comparisons by different arguments rather than by monotonicity.
\end{remark}

\subsection{Trees and basic notation}

Fix an integer $\Delta\ge3$, and write $d:=\Delta-1$. For a finite rooted tree $T^\rho$, the vertex $\rho$ denotes its root, and the boundary $\partial T^\rho$ is the set of leaves of $T^\rho$. For $u\in V(T^\rho)$, let $T^u$ denote the descendant subtree rooted at $u$. Unless otherwise specified, for a finite rooted tree $T$, $\partial T$ denotes its set of leaves, and boundary conditions are imposed on $\partial T$.

Let $\mathcal T_\infty$ denote the infinite $\Delta$-regular tree rooted at $\rho$. Every vertex other than $\rho$ has one parent and $d$ children, while $\rho$ has $\Delta$ children. For $h\ge1$, let $\dary$ denote the complete $d$-ary tree of height $h$ rooted at $\rho$, and let $\dreg$ denote the finite rooted $\Delta$-regular tree of height $h$ rooted at $\rho$. Equivalently, $\dreg$ is the ball of radius $h$ around $\rho$ in $\mathcal T_\infty$.

For vertices or edge sets, $\dist(\,\cdot\,,\,\cdot\,)$ denotes graph distance. If $\Lambda$ is a finite edge set and $A$ is a vertex set, we use the convention
\[
    \dist(\Lambda,A)
    :=
    \min\bigl\{
        \dist(v,A):
        v\text{ is an endpoint of an edge in }\Lambda
    \bigr\}.
\]
For an edge configuration $\omega$, we write $x\stackrel{\omega}{\longleftrightarrow}y$ when $x$ and $y$ are connected by an open path. The superscript $\omega$ will be omitted when the configuration is clear.

\subsection{The critical point}
\label{subsec:critical-point}

The critical point $\pc$ used throughout the paper is determined by the effective one-edge parameter
\[
    \widehat p
    :=
    \frac{p}{p+q(1-p)}.
\]
To see the one-edge rule directly, condition on all edges other than $e$. If the endpoints of $e$ are already connected through the conditioned configuration and the boundary identification, opening $e$ does not change the number of clusters, so the conditional open probability is $p$. Otherwise, opening $e$ merges two clusters, and the relative weights of the open and closed states are $p$ and $q(1-p)$, respectively. The conditional open probability is therefore $\widehat p$. Since $0<q<1$, one has $\widehat p>p$. Thus $\widehat p$, rather than $p$, is the natural open-edge parameter when exploring a cluster through portions of the tree that are not already connected through the wired exterior.

Every non-root vertex of $\mathcal T_\infty$ has $d=\Delta-1$ forward edges. The corresponding branching parameter is therefore $d\widehat p$, which motivates defining the tree critical point by $d\widehat p=1$. Solving this equation for $p$ gives
\begin{equation}
\label{eq:critical-point}
    \pc
    :=
    \frac{q}{d+q-1}.
\end{equation}
Equivalently,
\[
    p\le\pc
    \quad\Longleftrightarrow\quad
    d\widehat p\le1,
    \qquad
    p>\pc
    \quad\Longleftrightarrow\quad
    d\widehat p>1.
\]
This branching interpretation agrees exactly with the message analysis below. In Section~\ref{subsec:fixed-points}, the homogeneous recursion has the trivial fixed point $x=1$, corresponding to zero connection probability to the wired boundary, and its derivative there is $d\widehat p$.

\subsection{Local limits and the wired DLR specification}
Let $\Omega_\infty:=\{0,1\}^{E(\mathcal T_\infty)}$ with the product $\sigma$-algebra. An event or function is called \emph{local}, or a \emph{cylinder event} or \emph{cylinder function}, if it depends on only finitely many edges.

A sequence of probability measures $(\nu_n)_{n\ge1}$ on $\Omega_\infty$ converges locally to a probability measure $\nu$ if, for every finite edge set $\Lambda$, $\nu_n\vert_\Lambda\longrightarrow\nu\vert_\Lambda$ in total variation. Equivalently, $\mathbb E_{\nu_n}[F]\longrightarrow\mathbb E_\nu[F]$ for every bounded cylinder function $F$.

We next define the infinite-volume wired specification independently of any limiting construction.
Let $T\subset\mathcal T_\infty$ be a finite connected subtree and write $E_T:=E(T)$ and $\mathcal F_{T^c}:=\sigma\bigl(\omega(e):e\in E(\mathcal T_\infty)\setminus E_T\bigr)$. For $\omega\in\Omega_\infty$, let $\omega_{T^c}$ denote its restriction to the edges outside $E_T$. The exterior vertex boundary of this finite region is $\partial_{\mathrm E}T:=\bigl\{v\in V(T):v\text{ is incident to an edge in }E(\mathcal T_\infty)\setminus E_T\bigr\}$.

Fix an exterior configuration $\eta\in\{0,1\}^{E(\mathcal T_\infty)\setminus E_T}$. The exterior configuration induces a boundary partition $\xi_T(\eta)$ on $\partial_{\mathrm E}T$ as follows. Consider the equivalence relation generated by the following two rules:
\begin{enumerate}[(i)]
    \item $x,y\in\partial_{\mathrm E}T$ are equivalent if they are connected by an $\eta$-open path using only edges outside $E_T$;
    \item $x,y\in\partial_{\mathrm E}T$ are equivalent if each is connected through the exterior configuration to an infinite open component.
\end{enumerate}
Thus all vertices of $\partial_{\mathrm E}T$ connected through the exterior configuration to infinity belong to one common wired class, even when they lie in distinct infinite open components.

Let $\gamma_T(\cdot\mid\eta):=\mu_T^{\xi_T(\eta)}$, viewed as a kernel on $\Omega_\infty$ by leaving the exterior configuration unchanged.

\begin{lemma}[Consistency of the wired kernels]
\label{lem:wired-kernel-consistency}
The family $(\gamma_T)_T$ is a measurable, proper, and consistent probability specification. In particular, if $T_1\subseteq T_2$ are finite connected subtrees, then
\[
    \gamma_{T_2}\gamma_{T_1}
    =
    \gamma_{T_2}.
\]
\end{lemma}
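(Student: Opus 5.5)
The plan is to verify the three properties in turn: measurability, properness, and consistency. Consistency carries the real content.

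\emph{Measurability.} For a fixed finite subtree $T$, the partition $\xi_T(\eta)$ takes only finitely many values. For each fixed pair $x,y\in\partial_{\mathrm E}T$, the event ``$x\leftrightarrow y$ through an exterior open path'' is a countable union of cylinder events. The event ``$x$ is connected to an infinite exterior open component'' is a countable intersection over $n$ of events of the form ``$x$ connects to distance $n$ in the exterior'', which are again measurable. Hence $\eta\mapsto\xi_T(\eta)$ is $\mathcal F_{T^c}$-measurable. It follows that $\gamma_T(A\mid\eta)$ is measurable in $\eta$ for every cylinder event $A$, and then for every measurable $A$ by a monotone class argument. Properness is immediate, since the kernel leaves the exterior configuration unchanged: $\gamma_T(B\mid\cdot)=\mathbf 1_B$ for $B\in\mathcal F_{T^c}$.

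\emph{Consistency.} Fix $T_1\subseteq T_2$ and an exterior configuration $\eta$ outside $E_{T_2}$. I would show that for every configuration $\omega$ on $E_{T_2}$,
\[
  \mu_{T_2}^{\xi_{T_2}(\eta)}(\omega)\;\propto\;\mu_{T_1}^{\xi_{T_1}(\eta\vee\omega_{T_2\setminus T_1})}(\omega_{T_1})\cdot W(\omega_{T_2\setminus T_1}),
\]
for some weight $W$ depending only on the configuration on $E_{T_2}\setminus E_{T_1}$. Summing over $\omega_{T_1}$ then identifies the $\gamma_{T_2}$-conditional law of $\omega_{T_1}$ given the outside of $T_1$ as $\gamma_{T_1}$, which is exactly $\gamma_{T_2}\gamma_{T_1}=\gamma_{T_2}$.

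The identity reduces to a cluster-counting statement:
\[
  \kappa^{\xi_{T_2}(\eta)}(\omega)=\kappa^{\xi_{T_1}(\eta')}(\omega_{T_1})+c(\eta,\omega_{T_2\setminus T_1}),
\]
where $\eta'$ is the combined exterior of $T_1$. The edge weights $p^{|\omega|}(1-p)^{\cdots}$ factor trivially, so only this decomposition needs proof.

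The main obstacle is that the two boundary partitions must be compatible. I need to check that the partition $\xi_{T_1}(\eta')$ on $\partial_{\mathrm E}T_1$ coincides with the partition obtained by taking the $\xi_{T_2}(\eta)$-identification on $\partial_{\mathrm E}T_2$ and propagating it through the open edges of $E_{T_2}\setminus E_{T_1}$. For rule (i) this is just concatenation of open paths. For rule (ii), a vertex of $\partial_{\mathrm E}T_1$ reaches an infinite exterior component of $T_1$ exactly when it reaches, through $\omega_{T_2\setminus T_1}$, a vertex of $\partial_{\mathrm E}T_2$ that itself reaches infinity outside $T_2$. This holds because $T_2\setminus T_1$ is finite, so any infinite path must exit through $\partial_{\mathrm E}T_2$.

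Given this compatibility, I would count clusters in the graph on $V(T_2)$ with $\xi_{T_2}$-contraction. There are two kinds:
\begin{itemize}
  \item clusters meeting $V(T_1)$: these are in bijection with the clusters counted by $\kappa^{\xi_{T_1}(\eta')}(\omega_{T_1})$, up to clusters of the $T_1$-side that contain no vertex of $V(T_2)$. There are none, since $V(T_1)\subseteq V(T_2)$;
  \item clusters avoiding $V(T_1)$, apart from those reached only through boundary identification: these depend only on $(\eta,\omega_{T_2\setminus T_1})$ and give the term $c$.
\end{itemize}

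One care point is vertices of $\partial_{\mathrm E}T_1$ that are interior to $T_2$. In the kernel for $T_1$, the cluster counts treat them as vertices of $T_1$ with the induced identification. The bijection must correctly merge clusters of $T_1$ that are joined through the annulus $E_{T_2}\setminus E_{T_1}$, and this is precisely what the compatibility of partitions ensures. With that checked, the factorization follows, and consistency follows by summing.
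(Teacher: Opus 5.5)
Your proposal is correct and follows the paper's route. Measurability comes from writing exterior connection to infinity as a countable intersection of cylinder events, and properness from leaving the exterior unchanged. Consistency comes from checking that $\xi_{T_1}(\zeta,\eta)$ equals the partition obtained by revealing the annulus configuration on top of $\xi_{T_2}(\eta)$, since infinite connections must exit through $\partial_{\mathrm E}T_2$, and then applying the finite-volume domain Markov property. The only difference is that you prove the domain Markov step directly via the cluster-count decomposition $\kappa^{\xi_{T_2}(\eta)}=\kappa^{\xi_{T_1}(\eta')}+c$, whereas the paper simply invokes it.
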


\begin{proof}
For $x\in\partial_{\mathrm E}T$, connection to infinity through the exterior equals the intersection, over $n\ge1$, of the cylinder events that $x$ is connected through exterior edges to distance $n$. Hence $\eta\mapsto\xi_T(\eta)$ is measurable, and so is $\gamma_T$. Properness follows from the convention that $\gamma_T$ leaves the exterior configuration unchanged.

For consistency, fix $T_1\subseteq T_2$, an exterior configuration $\eta$ outside $E_{T_2}$, and a configuration $\zeta$ on $E_{T_2}\setminus E_{T_1}$. The partition induced on $\partial_{\mathrm E}T_1$ by the combined exterior configuration $(\zeta,\eta)$ is exactly the partition obtained by starting from $\xi_{T_2}(\eta)$ and then revealing $\zeta$. Indeed, two boundary vertices of $T_1$ are joined precisely when they are connected through the revealed annulus and the partition on $\partial_{\mathrm E}T_2$; moreover, a boundary vertex of $T_1$ reaches infinity precisely when it reaches the wired class of $\xi_{T_2}(\eta)$. The finite-volume domain Markov property therefore identifies the conditional law in $T_1$ under $\mu_{T_2}^{\xi_{T_2}(\eta)}$ with $\mu_{T_1}^{\xi_{T_1}(\zeta,\eta)}$. Averaging over $\zeta$ gives the stated consistency identity.
\end{proof}

This is the wired specification described in~\cite[Equation~(10.66)]{grimmett2006randomcluster}.

\begin{definition}[Wired DLR random-cluster measure]
\label{def:dlr}
A probability measure $\nu$ on $\Omega_\infty$ is called an \emph{infinite-volume random-cluster measure satisfying the wired DLR condition} if, for every finite connected subtree $T\subset\mathcal T_\infty$ and every bounded function $F$ depending only on the edges of $T$,
\[
    \mathbb E_\nu
    \left[
        F
        \,\middle|\,
        \mathcal F_{T^c}
    \right]
    =
    \gamma_T
    (F\mid\omega_{T^c})
    \qquad
    \nu\text{-a.s.}
\]
\end{definition}

In other words, conditionally on the configuration outside $T$, the law inside $T$ is the finite-volume random-cluster measure with the boundary partition induced by exterior connections, with all infinite exterior components wired together at infinity.

Local convergence and the DLR property are separate issues. Even when finite-volume measures converge locally, the limit does not automatically satisfy the DLR condition because the induced boundary partition depends on connections to infinity through the exterior configuration; this is related to the failure of almost-sure quasilocality for random-cluster measures on trees~\cite{haggstrom1996quasilocality}. In Section~\ref{sec:uniqueness}, we prove local convergence of the all-wired measures directly and record their DLR property separately before proving uniqueness.

\subsection{Negative-dependence terminology}

Following Pemantle~\cite{pemantle2000towards}, we introduce the negative-dependence notions used later. Let $\mu$ be a probability measure on $\{0,1\}^E$, with $E$ finite. We write $\omega\le\omega'$ if $\omega(e)\le\omega'(e)$ for every $e\in E$, that is, if every edge open in $\omega$ is also open in $\omega'$. A function $F:\{0,1\}^E\to\mathbb R$ is increasing if $\omega\le\omega' \Longrightarrow F(\omega)\le F(\omega')$.

\begin{definition}
The measure $\mu$ is \emph{negatively associated}, abbreviated \emph{NA}, if for every pair of disjoint edge sets $E_1,E_2\subseteq E$ and every pair of bounded increasing functions $F=F(\omega_{E_1})$ and $G=G(\omega_{E_2})$, one has
\[
    \operatorname{Cov}_\mu(F,G)
    \le0.
\]
\end{definition}

In particular, negative association implies pairwise negative correlation $\operatorname{Cov}_\mu \bigl(\omega(e),\omega(f)\bigr) \le0$ for edges $e\ne f$. The converse is false in general.

For positive numbers $\mathbf a=(a_e)_{e\in E}$, the external-field tilt of $\mu$ is the measure
\[
    \mu^{\mathbf a}(\omega)
    :=
    \frac{1}{Z(\mathbf a)}
    \mu(\omega)
    \prod_{e\in E}a_e^{\omega(e)}.
\]
For a random-cluster measure, this is equivalent to replacing the homogeneous edge activity $p/(1-p)$ by inhomogeneous positive edge activities.

A binary measure has the \emph{CNA+ property} if every measure obtained from it by a positive external-field tilt followed by a coordinate projection is conditionally negatively associated; that is, it remains negatively associated after every further positive-probability coordinate pinning. Here CNA stands for conditional negative association, while the $+$ indicates stability under positive external fields and projections. We will use the hierarchy in~\cite{pemantle2000towards}
\[
    \mathrm{CNA+}
    \quad\Longrightarrow\quad
    \mathrm{NA}
    \quad\Longrightarrow\quad
    \text{pairwise negative correlation}.
\]

We will not require full negative association of the random-cluster measure on arbitrary edge sets. Instead, the tree recursion produces CNA+ laws for certain branch-connectivity indicators. These local laws will imply negative association for increasing observables supported in disjoint collections of branches separated by a common vertex.

\section{Message recursion and fixed points}
\label{sec:message-fixed-points}
\subsection{Boundary conditions and messages}
\label{subsec:boundary-messages}

Throughout this section, we fix a finite rooted tree $T^\rho$ with root $\rho$. For a vertex $u\in V(T^\rho)$, we write $T^u$ for the rooted subtree below $u$. Thus $T^\rho$ denotes the whole tree. We also write $\partial T^\rho$ for the set of leaves. $\mathbf 1$ denotes the all-wired boundary condition on $\partial T^\rho$.

A boundary condition on $\partial T^\rho$ is a partition of the boundary vertices; vertices in the same part are considered wired together. The boundary conditions used in the tree recursion are the following.

\begin{definition}
\label{def:single-component-bc}
A boundary condition $\xi$ on $\partial T^\rho$ is called \emph{single-component} if all parts of its boundary partition except possibly one are singletons. We denote its specified wired component by $\mathcal C_1(\xi)$ and regard it as part of the boundary-condition data. The set $\mathcal C_1(\xi)$ may be empty or a singleton; when it has at least two vertices, it is the unique non-singleton part of the partition.
\end{definition}

The all-wired boundary condition is the main example of a single-component boundary condition. More generally, this notion allows us to track the wired boundary component through the tree.

For $u\in V(T^\rho)$, let $\xi_u$ be the restriction of the boundary partition $\xi$ to $\partial T^u$, with $\mathcal C_1(\xi_u):=\mathcal C_1(\xi)\cap\partial T^u$. Thus $\xi_u$ is again a single-component boundary condition, and its wired component may be empty.

We split the partition function on $T^u$ according to whether $u$ is connected to $\mathcal C_1(\xi_u)$. Let $Z_{T^u}^{\xi_u,1}$ be the restricted partition function for configurations in which $u$ is connected to $\mathcal C_1(\xi_u)$, and let $Z_{T^u}^{\xi_u,0}$ be the restricted partition function for configurations in which $u$ is not connected to $\mathcal C_1(\xi_u)$. If $\mathcal C_1(\xi_u)=\varnothing$, we set $Z_{T^u}^{\xi_u,1}:=0$ and $Z_{T^u}^{\xi_u,0}:=Z_{T^u}^{\xi_u}$.

\begin{definition}[Message]
\label{def:message}
For $u\in V(T^\rho)$, define the message at $u$ by
\[
    f^{\xi}(u)
    :=
    1+
    q
    \frac{
        Z_{T^u}^{\xi_u,1}
    }{
        Z_{T^u}^{\xi_u,0}
    }.
\]
For a boundary vertex $u\in \partial T^\rho$, we use the convention
\[
    f^{\xi}(u)
    =
    \begin{cases}
    1, & u\notin \mathcal C_1(\xi),\\
    +\infty, & u\in \mathcal C_1(\xi).
    \end{cases}
\]
\end{definition}

Thus internal messages take values in $[1,\infty)$, while boundary messages may take the value $+\infty$. The value $1$ means that there is no connection bias toward the wired component, while values larger than $1$ correspond to a positive connection bias.

Because the message is a normalized ratio, it immediately recovers the corresponding connection probability.  The conversion is uniformly Lipschitz on $[1,\infty)$, so later message-contraction estimates transfer directly to connection probabilities.

\begin{fact}
\label{fact:message-probability}
For every internal vertex $u\in V(T^\rho)$,
\[
    \mu_{T^u}^{\xi_u}
    \bigl(u\sim \mathcal C_1(\xi_u)\bigr)
    =
    \frac{
        f^{\xi}(u)-1
    }{
        f^{\xi}(u)+q-1
    }.
\]
Consequently, if $\xi_1,\xi_2$ are two single-component boundary conditions, then
\[
\begin{aligned}
&
\left|
    \mu_{T^u}^{(\xi_1)_u}
    \bigl(u\sim \mathcal C_1((\xi_1)_u)\bigr)
    -
    \mu_{T^u}^{(\xi_2)_u}
    \bigl(u\sim \mathcal C_1((\xi_2)_u)\bigr)
\right|
\le
\frac{1}{q}
\left|
    f^{\xi_1}(u)
    -
    f^{\xi_2}(u)
\right|.
\end{aligned}
\]
\end{fact}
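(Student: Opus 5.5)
The plan is to express the connection probability directly through the two restricted partition functions and then apply an elementary Lipschitz estimate to the resulting Möbius map. Fix an internal vertex $u$ and write $R:=Z_{T^u}^{\xi_u,1}/Z_{T^u}^{\xi_u,0}\in[0,\infty)$. The denominator is positive, since the all-closed configuration leaves $u$ isolated from $\mathcal C_1(\xi_u)$ and has positive weight. By Definition~\ref{def:message}, $f^\xi(u)=1+qR$, so $R=(f^\xi(u)-1)/q$.

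The events $\{u\sim\mathcal C_1(\xi_u)\}$ and its complement partition the configuration space. Hence
\[
    Z_{T^u}^{\xi_u}=Z_{T^u}^{\xi_u,1}+Z_{T^u}^{\xi_u,0},
\]
and therefore
\[
    \mu_{T^u}^{\xi_u}\bigl(u\sim\mathcal C_1(\xi_u)\bigr)
    =\frac{R}{1+R}
    =\frac{(f-1)/q}{1+(f-1)/q}
    =\frac{f-1}{f+q-1}.
\]
The degenerate case $\mathcal C_1(\xi_u)=\varnothing$ is covered automatically: there $R=0$ and $f=1$, and both sides vanish.

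For the Lipschitz bound, set $g(f):=(f-1)/(f+q-1)=1-q/(f+q-1)$ on $[1,\infty)$. Its derivative is
\[
    g'(f)=\frac{q}{(f+q-1)^2}.
\]
Since $f\ge1$, we have $f+q-1\ge q>0$, so $0<g'(f)\le q/q^2=1/q$. The mean value theorem then gives $|g(f_1)-g(f_2)|\le q^{-1}|f_1-f_2|$ for all $f_1,f_2\in[1,\infty)$. Applying this with $f_i=f^{\xi_i}(u)$ yields the stated inequality.

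There is no real obstacle here. The only points needing care are that internal messages are finite, which holds because $Z^{\xi_u,0}>0$, and that messages are at least $1$. Together these guarantee that the denominator $f+q-1$ stays bounded below by $q$, and this lower bound is what produces the uniform constant $1/q$.
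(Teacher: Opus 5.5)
Your proof is correct and follows the paper's argument: the identity comes from $Z^{\xi_u,1}/Z^{\xi_u,0}=(f-1)/q$ together with $Z=Z^{1}+Z^{0}$, and the empty-component case is handled separately. For the Lipschitz bound, the paper subtracts the two fractions directly to get $q|f_1-f_2|/\bigl((f_1+q-1)(f_2+q-1)\bigr)$ and bounds each factor of the denominator below by $q$; this is the same estimate as your mean-value bound $g'(f)=q/(f+q-1)^2\le 1/q$.
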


\begin{proof}
When $\mathcal C_1(\xi_u)=\varnothing$, the connection event is understood as the empty event, and both sides of the first identity are zero. Otherwise, Definition~\ref{def:message} gives
\[
    \frac{
        Z_{T^u}^{\xi_u,1}
    }{
        Z_{T^u}^{\xi_u,0}
    }
    =
    \frac{
        f^{\xi}(u)-1
    }{q}.
\]
Therefore
\[
\begin{aligned}
    \mu_{T^u}^{\xi_u}
    \bigl(u\sim \mathcal C_1(\xi_u)\bigr)
    =
    \frac{
        Z_{T^u}^{\xi_u,1}
    }{
        Z_{T^u}^{\xi_u,0}
        +
        Z_{T^u}^{\xi_u,1}
    }
    =
    \frac{
        f^{\xi}(u)-1
    }{
        f^{\xi}(u)+q-1
    }.
\end{aligned}
\]

The second claim follows by subtracting the two displayed expressions. Indeed,
\[
\left|
    \frac{f^{\xi_1}(u)-1}
         {f^{\xi_1}(u)+q-1}
    -
    \frac{f^{\xi_2}(u)-1}
         {f^{\xi_2}(u)+q-1}
\right| =
\left|
    \frac{
        q\bigl(f^{\xi_1}(u)
        -
        f^{\xi_2}(u)\bigr)
    }
    {
        \bigl(f^{\xi_1}(u)+q-1\bigr)
        \bigl(f^{\xi_2}(u)+q-1\bigr)
    }
\right|.
\]
Since each message is at least $1$, each denominator factor is at least $q$.  Hence the right-hand side is at most $q|f^{\xi_1}(u)-f^{\xi_2}(u)|/q^2$, which proves the stated bound. 
\end{proof}

To control the effect of changing the boundary condition, it is enough to control the corresponding messages. This is the reason the message recursion is the main object of the analysis.
We now derive the recursion satisfied by the messages. Define
\begin{equation}
\label{eq:Phi}
\Phi(x)
:=
\frac{
    x+(q-1)(1-p)
}{
    (1-p)x+p+(q-1)(1-p)
},
\qquad x\in[1,\infty),
\end{equation}
with the convention $\Phi(\infty)=(1-p)^{-1}$.
For $x\ge1$, the denominator is at least $p+q(1-p)>0$, and $\Phi(x)\ge1$.

\begin{lemma}[Tree message recursion]
\label{lem:tree-message-recursion}
Let \(T^\rho\) be a finite rooted tree and let \(\xi\) be a single-component boundary condition on \(\partial T^\rho\). For every internal vertex \(u\),
\[
    f^{\xi}(u)
    =
    \prod_{w\in N(u)}
    \Phi\bigl(f^{\xi}(w)\bigr),
\]
where \(N(u)\) is the set of children of \(u\).
\end{lemma}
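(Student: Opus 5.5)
The plan is to prove the recursion by factorizing the restricted partition functions of $T^u$ over the branches at $u$. For each child $w\in N(u)$, the branch $B_w$ consists of the edge $uw$ together with $T^w$. First dispose of the degenerate case $\mathcal C_1(\xi_u)=\varnothing$. Then $f^\xi(u)=1$, every child $w$ has $\mathcal C_1(\xi_w)=\varnothing$, and so $f^\xi(w)=1$. Since $\Phi(1)=1$ by direct substitution, the identity holds. From now on assume $\mathcal C_1(\xi_u)\neq\varnothing$.

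\textbf{Branch weights.} The key device is to weight branch configurations while \emph{excluding} from the cluster count both the cluster containing $u$ and the wired cluster $\mathcal C_1$. This makes the count additive over branches, because branches meet only at $u$ and through the single wired class. For each child $w$, let $A_w$ be the total such weight of configurations on $B_w$ in which $u$ is joined to $\mathcal C_1$ inside $B_w$. Let $B_w$ (abusing notation for the weight) be the total weight of the remaining configurations. Because $\xi$ is single-component, a configuration on $T^u$ has $u\sim\mathcal C_1$ iff some branch connects them. When $u\sim\mathcal C_1$, the $u$-cluster and the wired cluster coincide and contribute one factor $q$; otherwise they are two clusters and contribute $q^2$. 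This gives
\[
Z^{\xi_u,1}_{T^u}=q\Bigl(\prod_w(A_w+B_w)-\prod_w B_w\Bigr),\qquad Z^{\xi_u,0}_{T^u}=q^2\prod_w B_w,
\]
and hence
\[
f^\xi(u)=\prod_w\frac{A_w+B_w}{B_w}.
\]
Children with empty wired part need a word. For them $A_w=0$, so they contribute the factor $1=\Phi(1)$, and they may be included in the product harmlessly.

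\textbf{Reduction to one branch.} It remains to show $1+A_w/B_w=\Phi(f^\xi(w))$. Let $a'_w,b'_w$ be the weights of the analogous restricted sums on $T^w$ alone, again excluding the $w$-cluster and the wired cluster. Then $Z^{\xi_w,1}_{T^w}=q\,a'_w$ and $Z^{\xi_w,0}_{T^w}=q^2 b'_w$, so
\[
f^\xi(w)=1+a'_w/b'_w.
\]
Now condition on the state of $uw$.
\begin{itemize}
\item If $uw$ is open, $u$ joins the $w$-cluster. This contributes $p\,a'_w$ to $A_w$ and $p\,b'_w$ to $B_w$.
\item If $uw$ is closed and $w\sim\mathcal C_1$, the $w$-cluster is the excluded wired cluster. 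This contributes $(1-p)a'_w$ to $B_w$.
\item If $uw$ is closed and $w\not\sim\mathcal C_1$, the $w$-cluster is now a genuine extra cluster. This contributes $(1-p)q\,b'_w$ to $B_w$.
\end{itemize}
Write $x=f^\xi(w)$, so that $r:=a'_w/b'_w=x-1$. Then
\[
1+\frac{A_w}{B_w}=\frac{pr+p+(1-p)r+(1-p)q}{p+(1-p)r+(1-p)q}.
\]
After substituting $r=x-1$, this is exactly $\Phi(x)$ from~\eqref{eq:Phi}. For a leaf $w\in\mathcal C_1(\xi)$ we have $a'_w=1$ and $b'_w=0$, so $f^\xi(w)=+\infty$ and the branch gives $A_w=p$, $B_w=1-p$. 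This matches the convention $\Phi(\infty)=(1-p)^{-1}$.

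\textbf{Main obstacle.} The delicate step is the bookkeeping of cluster counts: verifying that excluding exactly the $u$-cluster and the wired cluster makes the weights multiplicative across branches. This relies on the wired class being a single part, so that its cluster is counted once globally even when several branches reach it. Care is also needed with edge cases where a subtree contains no wired leaf, where the factor $q^2$ versus $q$ would otherwise be mis-assigned.
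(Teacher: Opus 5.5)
Your proof is correct and is essentially the paper's argument. You sum over the states of the edges from $u$ to its children and factor over branches, obtaining $Z^{\xi_u,0}_{T^u}=q^2\prod_w B_w$ and $Z^{\xi_u,1}_{T^u}=q\bigl(\prod_w(A_w+B_w)-\prod_w B_w\bigr)$, which are the paper's $q^2A$ and $q(B-A)$. Since $a'_w=Z^{\xi_w,1}_{T^w}/q$ and $b'_w=Z^{\xi_w,0}_{T^w}/q^2$, your branch factors are the paper's $\bigl(tZ_0(w)+(1-p)Z_1(w)\bigr)/q$ and $\bigl(tZ_0(w)+Z_1(w)\bigr)/q$ with $t=p/q+(1-p)$; your separate handling of children without wired leaves, where only the ratio matters and not the normalization, is slightly more careful than the paper's write-up.
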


\begin{proof}
The same recursion appears as Lemma~2.5 of~\cite{blanca2026uniqueness}. For completeness, and because we use it for $0<q<1$, we give the direct calculation here. 

For a vertex $w$, write $Z_i(w):=Z_{T^w}^{\xi_w,i}$ for $i\in\{0,1\}$ and set $t:=p/q+(1-p)$. Define
\[
    A:=\prod_{w\in N(u)}
    \frac{tZ_0(w)+(1-p)Z_1(w)}{q},
    \qquad
    B:=\prod_{w\in N(u)}
    \frac{tZ_0(w)+Z_1(w)}{q}.
\]
Summing over the states of the edges from $u$ to its children gives $Z_{T^u}^{\xi_u,0}=q^2A$ and $Z_{T^u}^{\xi_u,1}=q(B-A)$. Therefore
\[
    f^\xi(u)=\frac BA=\prod_{w\in N(u)}
    \frac{tZ_0(w)+Z_1(w)}
         {tZ_0(w)+(1-p)Z_1(w)}
    =\prod_{w\in N(u)}
    \frac{f^\xi(w)+(q-1)(1-p)}
         {(1-p)f^\xi(w)+p+(q-1)(1-p)}.
\]
\end{proof}

\subsection{The homogeneous recursion and its fixed points}
\label{subsec:fixed-points}

We now specialize the recursion to the complete \(d\)-ary tree. When all children of a vertex receive the same message \(x\), by Lemma~\ref{lem:tree-message-recursion} the parent receives the message $\phi(x):=\Phi(x)^d$. Thus
\begin{equation}
\label{eq:phi}
    \phi(x)
    =
    \left(
    \frac{
        x+(q-1)(1-p)
    }{
        (1-p)x+p+(q-1)(1-p)
    }
    \right)^d,
    \qquad x\in[1,\infty),
\end{equation}
with the convention $\phi(\infty)=(1-p)^{-d}$.

Recall from~\eqref{eq:critical-point} that the tree threshold is
$\pc=q/(d+q-1)$.
The value \(x=1\) is always a fixed point of \(\phi\), since \(\Phi(1)=1\). This fixed point corresponds to zero connection probability to the wired boundary component.

The derivative at \(x=1\) is
\[
    \phi'(1)
    =
    \frac{dp}{p+q(1-p)}
    =
    d\widehat p.
\]
Therefore
\[
    \phi'(1)>1
    \quad\Longleftrightarrow\quad
    p>\pc.
\]
Thus, in the regime \(p>\pc\), the trivial fixed point \(1\) is unstable.

We now classify the fixed points of \(\phi\) on \([1,\infty)\) in the regime \(0<q<1\). The analysis is simpler than in the case \(q>1\): the lower and upper tree thresholds collapse, and there is no intermediate regime. For $q>1$, the scalar recursion can have multiple nontrivial fixed points, so the loss of stability of $x=1$ and the transition to the wired fixed point occur at distinct parameter values; see~\cite{blanca2026uniqueness} for a detailed fixed-point analysis. The two fixed-point regimes for $q<1$ are illustrated in Figure~\ref{fig:fixed-point-structure}.

Before starting the fixed-point analysis, we introduce the auxiliary function
\[
    \psi(r):=\frac{r^d+q-1}{\sum_{i=1}^{d}r^i+q-1},
    \qquad r>1.
\]
The next lemma establishes the basic properties of \(\psi\) that will be used in the following theorem.

\begin{lemma}
\label{lem:psi-monotone}
Let \(0<q<1\) and \(d\ge2\). Then \(\psi\) is strictly increasing on \((1,\infty)\). Moreover,
\[
    \lim_{r\downarrow1}\psi(r)
    =
    \frac{q}{d+q-1}
    =
    \pc,
    \qquad
    \lim_{r\to\infty}\psi(r)=1.
\]
\end{lemma}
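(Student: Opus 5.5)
The plan is to write $S(r):=\sum_{i=1}^d r^i$ and $N(r):=r^d+q-1$, so that $\psi=N/(S+q-1)$, and to handle the two limits first since they are immediate. As $r\downarrow1$ we have $N\to q$ and $S+q-1\to d+q-1>0$, which gives $\psi\to q/(d+q-1)=\pc$. As $r\to\infty$ both numerator and denominator are dominated by $r^d$, and the ratio tends to $1$. On $(1,\infty)$ we have $N\ge q>0$ and $S+q-1>d-1>0$, so $\psi$ is well defined and positive.

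For strict monotonicity I would avoid a raw quotient-rule computation and instead use the reciprocal:
\[
    \frac{1}{\psi(r)}
    =
    \frac{S(r)+q-1}{r^d+q-1}
    =
    1+\frac{\sum_{i=1}^{d-1}r^i}{r^d-(1-q)}.
\]
It then suffices to show that $g(r):=\sum_{i=1}^{d-1}r^i\big/\bigl(r^d-c\bigr)$ is strictly decreasing on $(1,\infty)$, where $c:=1-q\in(0,1)$. Dividing numerator and denominator by $r^d$ gives
\[
    g(r)=\frac{\sum_{j=1}^{d-1}r^{-j}}{1-c\,r^{-d}}.
\]
In the variable $s=1/r\in(0,1)$, the numerator $\sum_{j=1}^{d-1}s^j$ is positive and strictly increasing in $s$ because $d\ge2$. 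The denominator $1-cs^d$ is positive, since $cs^d<1$, and strictly decreasing in $s$. So $g$ is strictly increasing in $s$, hence strictly decreasing in $r$. Therefore $1/\psi$ is strictly decreasing and $\psi$ is strictly increasing.

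The only delicate point is the sign bookkeeping: $q<1$ is what makes the denominator $1-cs^d$ decrease in $s$, and $d\ge2$ is what makes the numerator nonconstant. This rewriting replaces the derivative computation, which would otherwise be the main obstacle. As a consistency check, the monotonicity yields $\pc<\psi(r)<1$ on $(1,\infty)$, which matches the stated limits.
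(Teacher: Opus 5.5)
Your proof is correct, but it takes a different route from the paper. The paper applies the quotient rule directly and expands the numerator of $\psi'$ as $\sum_{i=1}^{d-1} r^{i-1}\bigl((d-i)r^d+i(1-q)\bigr)$, which is a sum of positive terms. You instead write $1/\psi=1+g$ with $g(r)=\sum_{i=1}^{d-1}r^i\big/\bigl(r^d-(1-q)\bigr)$. You then get monotonicity in $s=1/r$ by comparing a positive increasing numerator with a positive decreasing denominator. Your version avoids the algebra and shows plainly where $q<1$ and $d\ge2$ are used.

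What the paper's computation gives, and yours does not, is the pointwise bound $\psi'(r)>0$ on $(1,\infty)$. This is stronger than strict monotonicity, and it is what the paper actually uses later. In the proof of Proposition~\ref{prop:attractivity}, the inequality $\phi'(x^*)<1$ is derived from $\psi'(r^*)>0$. A strictly increasing function can still have zero derivative at an isolated point, so your proof as written would not support that step. The fix is the one-line differential form of your own argument. In the variable $s$, the numerator has derivative $\sum_{j=1}^{d-1} j s^{j-1}\ge 1$ and the denominator has derivative $-(1-q)d s^{d-1}<0$, so $dg/ds>0$. Hence $dg/dr<0$, and therefore $\psi'=-(dg/dr)/(1+g)^2>0$.
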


\begin{proof}
Let $A(r):=\sum_{i=1}^{d}r^i$. Then, 
\[
    \psi(r)=\frac{r^d+q-1}{A(r)+q-1}.
\]
For \(r>1\), the denominator is positive. By differentiating, we obtain
\[
    \psi'(r)=\frac{dr^{d-1}(A(r)+q-1) - A'(r)(r^d+q-1)}{(A(r)+q-1)^2}.
\]
Since
\[
    A'(r)=\sum_{i=1}^{d} i r^{i-1},
\]
the numerator of $\psi'(r)$ can be written as
\[
    \sum_{i=1}^{d-1}
    r^{i-1}
    \Bigl((d-i)r^d+i(1-q)\Bigr).
\]
Every term in this sum is strictly positive because \(r>1\), \(d-i>0\), and \(1-q>0\). Hence $\psi'(r)>0$ for all $r>1$. The endpoint limits are immediate since the numerator and denominator of $\psi(r)$ both have leading term $r^d$, so their ratio tends to $1$ as $r\to\infty$.
\end{proof}

\begin{figure}[t]
\centering
\begin{subfigure}[t]{0.48\linewidth}
    \centering
    \begin{overpic}[
        width=\linewidth,
        clip,
        trim={0cm 0cm 2cm 0cm}
    ]{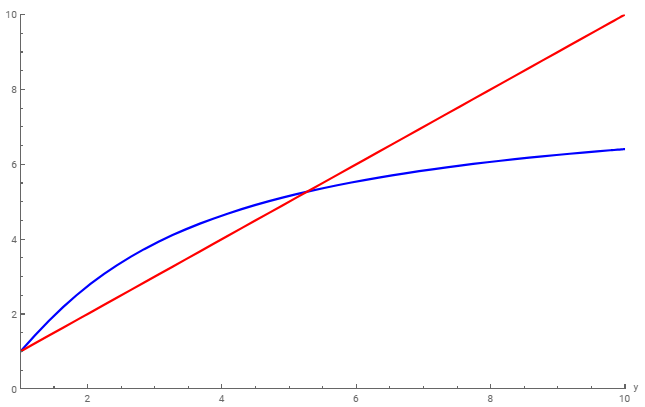}
        \put(76,58){%
            \makebox(0,0)[l]{\scriptsize$y=x$}%
        }
        \put(67,37){%
            \makebox(0,0)[l]{\scriptsize$y=\phi(x)$}%
        }
        \put(54.5,38.1){%
            \color{black}\circle*{1.2}%
        }
        \put(44,42){%
            \makebox(0,0)[l]{\scriptsize$(x^*,x^*)$}%
        }
        \put(27,70){%
            \makebox(0,0)[l]{\scriptsize$(p,q,d)=(0.5,0.5,3),\quad \pc=0.2$}%
        }
    \end{overpic}
    \caption{$p>\pc$}
\end{subfigure}
\hfill
\begin{subfigure}[t]{0.48\linewidth}
    \centering
    \begin{overpic}[
        width=\linewidth,
        clip,
        trim={0cm 0cm 2cm 0cm}
    ]{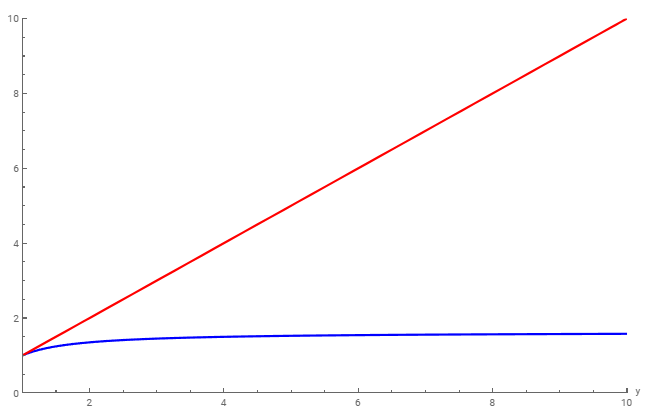}
        \put(76,58){%
            \makebox(0,0)[l]{\scriptsize$y=x$}%
        }
        \put(67,19){%
            \makebox(0,0)[l]{\scriptsize$y=\phi(x)$}%
        }
        \put(27,70){%
            \makebox(0,0)[l]{\scriptsize$(p,q,d)=(0.15,0.5,3),\quad \pc=0.2$}%
        }
    \end{overpic}
    \caption{$p<\pc$}
\end{subfigure}

\caption{Plots of $\phi(x)$ in the two regimes. In each panel, intersections of the blue curve with the red line are fixed points of $\phi$.}
\label{fig:fixed-point-structure}
\end{figure}

\begin{theorem}[Fixed-point structure for \(0<q<1\)]
\label{thm:fixed-point-structure}
Let \(0<q<1\), \(d\ge2\), and \(p\in(0,1)\). Let \(\phi\) be the function in~\eqref{eq:phi}. Then \(x=1\) is always a fixed point of \(\phi\). Moreover:
\begin{enumerate}[(i)]
    \item If \(p\le \pc\), then \(\phi\) has no fixed point in \((1,\infty)\). Hence \(x=1\) is the unique fixed point in \([1,\infty)\).

    \item If \(p>\pc\), then \(\phi\) has exactly one fixed point in \((1,\infty)\). Hence \(\phi\) has exactly two fixed points in \([1,\infty)\): the trivial fixed point \(1\), and a unique nontrivial fixed point \(x^*>1\).
\end{enumerate}
In the second case, $x^*=(r^*)^d$, where \(r^*>1\) is the unique solution of 
\[
    p=\psi(r)
    =
    \frac{
        r^d+q-1
    }{
        \sum_{i=1}^{d}r^i+q-1
    }.
\]
\end{theorem}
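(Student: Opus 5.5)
Write $x=r^d$ with $r>1$ (so $r=x^{1/d}$), which is a bijection from $(1,\infty)$ onto itself. Since $\phi=\Phi^d$ and $\Phi(x)\ge1$ on $[1,\infty)$, a point $x>1$ is a fixed point of $\phi$ if and only if $\Phi(r^d)=r$. The plan is to rewrite this equation as $p=\psi(r)$, and then to count its solutions using Lemma~\ref{lem:psi-monotone}. That $x=1$ is always a fixed point follows from $\Phi(1)=1$, as already noted.

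The key algebraic step is to clear denominators in $\Phi(r^d)=r$:
\[
    r^d+(q-1)(1-p)
    =
    r\bigl((1-p)r^d+p+(q-1)(1-p)\bigr).
\]
Collecting the terms that carry a factor $p$ gives
\[
    p\,\bigl(r^{d+1}-r^d-r+1+(q-1)(r-1)\bigr)
    =
    r^{d+1}-r^d+(q-1)(r-1).
\]
Both sides contain a factor $(r-1)$: the left side equals $p(r-1)\bigl(r^d-1+q-1\bigr)$ up to regrouping, and the right side equals $(r-1)\bigl(r^d+q-1\bigr)$. Since $r>1$, I will divide by $r-1$.

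I expect getting this factorization right to be the only real obstacle. My first check is the identity $r^{d+1}-r^d-r+1=(r-1)(r^d-1)$. On that basis I would initially expect $p(r^d+q-2)=r^d+q-1$, which has the wrong form. So I will redo the expansion carefully, keeping the term $p$ that sits inside the denominator, with the target
\[
    p\Bigl(\sum_{i=1}^d r^i+q-1\Bigr)=r^d+q-1 .
\]
The presence of $\sum_{i=1}^d r^i$ suggests the correct grouping. On the left, $r(1-p)r^d-r(1-p)(1-q)+rp$ should be compared with $r^d-(1-p)(1-q)$, and the difference should then be divided by $(r-1)$ using $r^{d+1}-r=r(r-1)\sum_{i=0}^{d-1}r^i$. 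The factor $r^d-r^{d+1}$ combines with $p\,r^{d+1}$ terms in the same way. I will carry this out term by term until it lands exactly on the display above. Since the theorem statement asserts this form, the expansion must produce it.

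Granting the equivalence, the counting is straightforward. For $x>1$, $x$ is a fixed point of $\phi$ if and only if $p=\psi(x^{1/d})$. By Lemma~\ref{lem:psi-monotone}, $\psi$ is a strictly increasing continuous bijection from $(1,\infty)$ onto $(\pc,1)$.

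In case (i), $p\le\pc$ lies outside this range, so there is no solution and $1$ is the unique fixed point in $[1,\infty)$.

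In case (ii), $p\in(\pc,1)$, so there is exactly one $r^*>1$ with $\psi(r^*)=p$. This gives exactly one nontrivial fixed point $x^*=(r^*)^d>1$.

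Finally, I will note that the point at infinity is not a fixed point: $\phi(\infty)=(1-p)^{-d}$ is finite. So no additional solutions arise there.
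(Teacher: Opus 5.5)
Your route is the paper's. You substitute $x=r^d$ and reduce $\phi(x)=x$ to $\Phi(r^d)=r$; your observation that $\Phi\ge1$ justifies taking $d$-th roots. You then rewrite this as $p=\psi(r)$ and count solutions with Lemma~\ref{lem:psi-monotone}, and the counting part is fine.

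The gap is in the reduction to $p=\psi(r)$, which is the only real content beyond the lemma. Your displayed collection of the $p$-terms is wrong: the coefficient of $p$ contains a spurious $-r^d+1$. That error is what produces $p(r^d+q-2)=r^d+q-1$. You notice this has the wrong form, but you do not find the error. Instead you write that the expansion ``must produce'' the target because the theorem asserts it, which is circular. As written, the proposal never establishes the equivalence on which both (i) and (ii) rest.

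The repair is short. Your later sketch, comparing $r(1-p)r^d-r(1-p)(1-q)+rp$ with $r^d-(1-p)(1-q)$, is the right setup. Carried out, it gives
\[
  r^{d+1}-r^d+(q-1)(r-1)
  =
  p\Bigl(r^{d+1}-r+(q-1)(r-1)\Bigr).
\]
Using $r^{d+1}-r^d=r^d(r-1)$ and $r^{d+1}-r=(r-1)\sum_{i=1}^d r^i$, and dividing by $r-1>0$, you obtain $p\bigl(\sum_{i=1}^d r^i+q-1\bigr)=r^d+q-1$.

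Every step is reversible for $r>1$:
\begin{itemize}
\item the denominator you cleared satisfies $(1-p)r^d+p+(q-1)(1-p)\ge p+q(1-p)>0$;
\item $\sum_{i=1}^d r^i+q-1>d+q-1>0$.
\end{itemize}
So for $r>1$ the fixed-point equation is equivalent to $p=\psi(r)$. With this in place, the rest of your argument goes through exactly as in the paper.
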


\begin{proof}
The identity \(\phi(1)=1\) follows from \(\Phi(1)=1\). Now let \(x>1\) and write \(r=x^{1/d}>1\). Then \(\phi(x)=x\) is equivalent to $r=\Phi(r^d)$. 
Equivalently, $r\bigl((1-p)r^d+p+(q-1)(1-p)\bigr)=r^d+(q-1)(1-p)$. After rearranging, we have $(r-1)(r^d+q-1)=p(r^{d+1}-r+(r-1)(q-1))$. Factoring out \(r-1\), this equation becomes $p=\psi(r)$, where
\begin{equation}
\label{eq:psi}
    \psi(r)
    :=
    \frac{
        r^d+q-1
    }{
        \sum_{i=1}^{d}r^i+q-1
    },
    \qquad r>1.
\end{equation}

By Lemma~\ref{lem:psi-monotone}, the function \(\psi\) is strictly increasing on \((1,\infty)\), with image $(\pc,1)$. Therefore, if \(p\le \pc\), there is no solution \(r>1\), and hence no fixed point \(x>1\). Thus \(x=1\) is the unique fixed point in \([1,\infty)\).

If \(p>\pc\), then since \(p<1\), there exists a unique \(r^*>1\) such that $p=\psi(r^*)$. The corresponding value $x^*=(r^*)^d$ is the unique fixed point of \(\phi\) in \((1,\infty)\).
\end{proof}

\subsection{Attractivity of the nontrivial fixed point}
\label{subsec:attractivity}

We now assume $p>\pc$. By Theorem~\ref{thm:fixed-point-structure}, the map \(\phi\) has exactly two fixed points in \([1,\infty)\): the trivial fixed point \(1\), and a unique nontrivial fixed point \(x^*>1\). The point \(1\) is unstable. We now prove that $x^*$ attracts every initial condition in $(1,\infty]$.

\begin{proposition}
\label{prop:attractivity}
Let \(0<q<1\), \(d\ge2\), and \(p>\pc\). Let \(x^*>1\) be the unique nontrivial fixed point of \(\phi\). Then for every \(x_0>1\),
\[
    \lim_{k\to\infty}\phi^{(k)}(x_0)=x^*.
\]
Here $\phi^{(k)}$ denotes the $k$-fold iterate of $\phi$.
The same conclusion holds for \(x_0=\infty\), with the convention \(\phi(\infty)=(1-p)^{-d}\). Moreover, $0<\phi'(x^*)<1$.
\end{proposition}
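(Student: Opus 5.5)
The plan is to exploit monotonicity of the scalar map, so that the iteration becomes a monotone bounded sequence, and then read off strict contraction at $x^*$ from the factorization already used in the proof of Theorem~\ref{thm:fixed-point-structure}.

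\textbf{Step 1: $\Phi$ and $\phi$ are increasing.} The map $\Phi$ in~\eqref{eq:Phi} is a M\"obius transformation $x\mapsto (ax+b)/(cx+e)$ with
\[
a=1,\quad b=(q-1)(1-p),\quad c=1-p,\quad e=p+(q-1)(1-p).
\]
Its determinant is
\[
ae-bc=p+(q-1)(1-p)p=p\bigl(p+q(1-p)\bigr)>0.
\]
The denominator is positive on $[1,\infty)$, so $\Phi$ is strictly increasing and continuous there, with $\Phi\ge 1$ and $\Phi(x)\uparrow(1-p)^{-1}$ as $x\to\infty$. Hence $\phi=\Phi^d$ is strictly increasing, continuous on $[1,\infty]$, and bounded by $(1-p)^{-d}$.

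\textbf{Step 2: sign of $\phi(x)-x$.} By Theorem~\ref{thm:fixed-point-structure}, $\phi(x)-x$ vanishes on $(1,\infty)$ only at $x^*$. Since $\phi'(1)=d\widehat p>1$, we have $\phi(x)>x$ just to the right of $1$. Since $\phi$ is bounded, $\phi(x)<x$ for large $x$. By continuity, $\phi(x)>x$ on $(1,x^*)$ and $\phi(x)<x$ on $(x^*,\infty)$.

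\textbf{Step 3: convergence.} First take $x_0\in(1,x^*)$. Monotonicity gives $\phi(x_0)<\phi(x^*)=x^*$, and Step 2 gives $\phi(x_0)>x_0$. Inductively, the iterates increase and stay below $x^*$. They therefore converge to a limit in $(x_0,x^*]$, and by continuity this limit is a fixed point, so it equals $x^*$. The case $x_0>x^*$ is symmetric, with decreasing iterates bounded below by $x^*$. For $x_0=\infty$, note that $\phi(\infty)=(1-p)^{-d}\in(1,\infty)$, so one step reduces to the previous cases.

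\textbf{Step 4: $0<\phi'(x^*)<1$.} Positivity is immediate from Step 1, since $\phi'=d\,\Phi^{d-1}\Phi'>0$. For the strict upper bound, I would not rely on the sign change from Step 2, which only gives $\phi'(x^*)\le 1$ and cannot exclude tangency. Instead, set $x=r^d$ and use the algebra from the proof of Theorem~\ref{thm:fixed-point-structure}. Writing $A(r)=\sum_{i=1}^d r^i$ and $D(r)$ for the positive denominator of $\Phi(r^d)$, that algebra should give
\[
\Phi(r^d)-r=\frac{(r-1)\bigl(A(r)+q-1\bigr)\bigl(p-\psi(r)\bigr)}{D(r)}.
\]
The sign is consistent with Step 2: $\psi(r)<p$ for $r\in(1,r^*)$. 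Differentiating at $r^*$, where the last factor vanishes, gives
\[
\frac{d}{dr}\bigl[\Phi(r^d)-r\bigr]\Big|_{r^*}=-\frac{(r^*-1)\bigl(A(r^*)+q-1\bigr)\psi'(r^*)}{D(r^*)}.
\]
By the proof of Lemma~\ref{lem:psi-monotone}, $\psi'(r^*)>0$ strictly, because its numerator is a sum of positive terms. Hence $\frac{d}{dr}\Phi(r^d)\big|_{r^*}<1$.

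Finally, since $\Phi(x^*)=r^*$,
\[
\phi'(x^*)=d\,\Phi(x^*)^{d-1}\Phi'(x^*)=d\,(r^*)^{d-1}\Phi'\bigl((r^*)^d\bigr)=\frac{d}{dr}\Phi(r^d)\Big|_{r^*}<1.
\]

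The main obstacle is this last strictness step: the qualitative picture only gives $\phi'(x^*)\le1$. The real work is checking the displayed factorization and its sign carefully, and transferring $\psi'(r^*)>0$ into a strict bound on $\phi'(x^*)$.
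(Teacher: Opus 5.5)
Your proposal is correct and follows the paper's proof essentially step for step. Both arguments use monotone bounded iteration, with the limit identified as $x^*$ by uniqueness of the nontrivial fixed point, and both obtain the strict bound $\phi'(x^*)<1$ from the factorization $\Phi(r^d)-r=(r-1)\bigl(A(r)+q-1\bigr)\bigl(p-\psi(r)\bigr)/D(r)$ together with $\psi'(r^*)>0$. The only real difference is the sign of $\phi(x)-x$: you get it from continuity, $\phi'(1)=d\widehat p>1$, and boundedness of $\phi$, whereas the paper derives it from monotonicity of $\phi_p$ in $p$ and of $\psi$; both routes are valid, and yours is slightly more elementary.
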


\begin{proof}
First observe that \(\Phi\), and hence \(\phi\), is strictly increasing on \([1,\infty)\). Also,
\[
    \lim_{x\to\infty}\phi(x)=(1-p)^{-d}<\infty.
\]

We claim that
\begin{equation}\label{eq:phi-crossing-sign}
    \phi(x)>x \quad\text{for }1<x<x^*,
    \qquad
    \phi(x)<x \quad\text{for }x>x^*.
\end{equation}
Indeed, write \(r=x^{1/d}\) and $\phi_p$ for the map $\phi$ to emphasize the dependence on $p$. For fixed \(x>1\),
\[
    \partial_p\Phi_p(x)
    =
    \frac{(x-1)(x+q-1)}
    {\bigl((1-p)x+p+(q-1)(1-p)\bigr)^2}
    >0,
\]
and hence $\phi_p(x)=\Phi_p(x)^d$ is strictly increasing in $p$. The equation \(\phi_p(x)=x\) is equivalent to \(p=\psi(r)\) where $\psi$ is the function in \eqref{eq:psi}. Since \(\psi\) is strictly increasing and \(p=\psi(r^*)\), where \(x^*=(r^*)^d\), we have
\begin{equation}\label{eq:p-psi-crossing-sign}
    p>\psi(r) \quad\text{if }1<r<r^*,
    \qquad
    p<\psi(r) \quad\text{if }r>r^*.
\end{equation}
Rearranging the fixed-point equation back in terms of \(\phi_p\), the inequalities in \eqref{eq:p-psi-crossing-sign} give \(\phi_p(x)>x\) for \(1<x<x^*\) and \(\phi_p(x)<x\) for \(x>x^*\), which is exactly \eqref{eq:phi-crossing-sign}.

Set $x_k:=\phi^{(k)}(x_0)$, so that $x_{k+1}=\phi(x_k)$. If \(x_0\in(1,x^*)\), then $x_1=\phi(x_0)>x_0$ by~\eqref{eq:phi-crossing-sign}. Since $\phi$ is increasing and $x_0<x^*$, we have $x_1=\phi(x_0)<\phi(x^*)=x^*$. Thus, $x_0<x_1<x^*$. By induction, $x_k<x_{k+1}<x^*$ for all $k\ge0$. Thus \((x_k)_{k\ge0}\) is increasing and bounded above by \(x^*\), so it converges to some $\tilde x \in (1,x^*]$. By continuity of \(\phi\),
\[
    \phi(\tilde x)
    =\phi\left(\lim_{k\to\infty}x_k\right)
    =\lim_{k\to\infty}\phi\left(x_k\right)
    =\lim_{k\to\infty}x_{k+1}
    =\tilde x.
\]
Since $\tilde x>1$ and $x^*$ is the unique fixed point in $(1, \infty)$, we have $\tilde x = x^*$.

If \(x_0>x^*\), then \(x_1=\phi(x_0)<x_0\). Since \(\phi\) is increasing and \(x_0>x^*\), we also have \(x_1=\phi(x_0)>\phi(x^*)=x^*\). Thus \(x^*<x_1<x_0\), and by induction \(x^*<x_{k+1}<x_k\) for all \(k\ge0\). The sequence $(x_k)_{k\ge0}$ is decreasing and bounded below by \(x^*\), so the same argument applies. 

If \(x_0=\infty\), then $x_1=\phi(\infty)=(1-p)^{-d}\in(1,\infty).$ This reduces to the case above.

It remains to prove \(0<\phi'(x^*)<1\). Positivity follows from the explicit derivative formula
\[
\phi'(x)
=
d p
\left(
\frac{
    x+(q-1)(1-p)
}{
    (1-p)x+p+(q-1)(1-p)
}
\right)^{d-1}
\frac{
    1+(q-1)(1-p)
}{
    \bigl((1-p)x+p+(q-1)(1-p)\bigr)^2
} > 0
\] 
for $x\ge1$. Thus \(\phi'(x^*)>0\).

For the upper bound, set $h(x):=\phi(x)-x$, and write $\Phi_p$ for $\Phi$ to emphasize its dependence on $p$. Define
\[
    D_p(r)
    :=
    (1-p)r^d+p+(q-1)(1-p)>0,
\]
for $r\ge1$. Using
\[
    r^{d+1}-r
    =
    (r-1)\sum_{i=1}^d r^i,
\]
we obtain
\[
\begin{aligned}
    D_p(r)\bigl(\Phi_p(r^d)-r\bigr)
    &=
    r^d+(q-1)(1-p)-rD_p(r)
    \\
    &=
    (r-1)
    \left[
        p\left(\sum_{i=1}^d r^i+q-1\right)
        -(r^d+q-1)
    \right]
    \\
    &=
    (r-1)
    \left(\sum_{i=1}^d r^i+q-1\right)
    \bigl(p-\psi(r)\bigr),
\end{aligned}
\]
where the last equality uses the definition of $\psi$ in~\eqref{eq:psi}. Dividing by $D_p(r)$ gives
\[
    \Phi_p(r^d)-r
    =
    \frac{
        (r-1)\left(\sum_{i=1}^d r^i+q-1\right)
    }{
        D_p(r)
    }
    \bigl(p-\psi(r)\bigr).
\]
Since $p=\psi(r^*)$, differentiation at $r=r^*$ yields
\[
\begin{aligned}
    \left.
    \frac{d}{dr}
    \bigl(\Phi_p(r^d)-r\bigr)
    \right|_{r=r^*}
    &=-
    \frac{
        (r^*-1)\left(\sum_{i=1}^d (r^*)^i+q-1\right)
    }{
        D_p(r^*)
    }
    \psi'(r^*)
    <0,
\end{aligned}
\]
where the last inequality follows from Lemma~\ref{lem:psi-monotone}. On the other hand, since $x^*=(r^*)^d$ and $\Phi_p(x^*)=r^*$,
\[
    \left.
    \frac{d}{dr}
    \bigl(\Phi_p(r^d)-r\bigr)
    \right|_{r=r^*}
    =
    d(r^*)^{d-1}\Phi_p'(x^*)-1
    =
    \phi'(x^*)-1
    =
    h'(x^*).
\]
Thus $h'(x^*)<0$, so $\phi'(x^*)<1$. Therefore $0<\phi'(x^*)<1$.
\end{proof}

\section{Weak spatial mixing under positive-density wired boundaries}
\label{sec:wsm}

Fix $\Delta\ge3$ and write $d:=\Delta-1$. We use $\dary$ for the complete $d$-ary tree of height $h$ with the root $\rho$, and $\dreg$ for the finite $\Delta$-regular tree of height $h$ with the root $\rho$, namely the ball of radius $h$ around a fixed vertex in the infinite $\Delta$-regular tree.

As rooted trees, $\dary$ and $\dreg$ have the same branching structure except at the root: the root of $\dary$ has $d$ children, whereas the root of $\dreg$ has $\Delta=d+1$ children. Every other internal vertex has $d$ children in both trees.

We now turn the stability of the nontrivial fixed point into a spatial-mixing estimate for random boundary conditions whose wired component has positive density.
The fixed-point analysis from Section~\ref{subsec:fixed-points} shows that, when $p>\pc$, the homogeneous recursion $\phi(x)=\Phi(x)^d$ has an unstable fixed point at $1$ and a unique attractive fixed point $x^*>1$. Thus, once the messages are uniformly separated from $1$, their subsequent evolution toward the root is controlled by the attractivity of $x^*$.

An arbitrary single-component boundary condition may not produce such a separation. For instance, if only one leaf belongs to the wired component, then the influence of that leaf is transmitted along a single path and may vanish as the height increases. We therefore work with boundary conditions whose wired component occupies a positive density of the boundary. Although our main application is to the complete trees $\dary$ or the $\Delta$-regular trees $\dreg$, the following notion is defined for an arbitrary finite tree.

\begin{definition}[$\theta$-wired boundary condition]
\label{def:theta-wired}
Let $\theta\in(0,1]$ and $T$ be a finite tree. A distribution over single-component boundary conditions on $\partial T$ is called \emph{$\theta$-wired} if the distribution of the wired component stochastically dominates, with respect to the inclusion order on $2^{\partial T}$, the distribution of a random subset $A\subseteq\partial T$ in which every boundary vertex is included independently with probability $\theta$. Equivalently, there is a coupling such that $A\subseteq\mathcal C_1(\xi)$ almost surely.
\end{definition}

Thus, a $\theta$-wired boundary condition may leave many boundary vertices unwired, but its wired component stochastically contains a product Bernoulli subset of positive density. We now state the main result of this section.

\begin{theorem}[Weak spatial mixing under $\theta$-wired random boundaries]
\label{thm:wsm}
Let $0<q<1$, $\Delta\ge3$, $p>\pc$, and $\theta\in(0,1]$. Let $\mathsf M$ be a $\theta$-wired distribution on single-component boundary conditions on $\partial \dreg$, and let $\xi\sim\mathsf M$.

There exist constants $c>0, C>0, \beta\in(0,1), \gamma>0,$ depending only on $p,q,d,$ and $\theta$, such that the following holds. For every vertex $u\in V(\dreg)$ with $H(u):=\dist\bigl(u,\partial \dreg\bigr)\ge\gamma$, we have 
\[
\mathbb P_{\xi\sim\mathsf M}\Bigg(
\left|
    \mu_{\dreg}^{\mathbf 1}
    \bigl(u\sim\partial \dreg\bigr)
    -
    \mu_{\dreg}^{\xi}
    \bigl(u\sim\mathcal C_1(\xi)\bigr)
\right|
>
C\beta^{H(u)}
\Bigg)
\le
\exp\left(-c d^{\sqrt{H(u)}}\right).
\]
\end{theorem}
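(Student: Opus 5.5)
The plan is to compare messages rather than probabilities, using Fact~\ref{fact:message-probability}. It converts a message discrepancy at $u$ into a connection-probability discrepancy at the cost of a factor $1/q$. By Lemma~\ref{lem:tree-message-recursion}, the messages $f^{\mathbf 1}$ and $f^{\xi}$ both evolve upward under the same maps $x\mapsto\prod_{w}\Phi(x_w)$. Under $\mathbf 1$, every vertex at height $k$ (counted from the leaves) carries the deterministic value $\phi^{(k)}(\infty)$. This converges to $x^*$ geometrically by Proposition~\ref{prop:attractivity}, since $\phi'(x^*)<1$ and the iterates eventually stay in a small neighbourhood of $x^*$. Everything therefore reduces to showing that, with the stated probability, the random messages $f^{\xi}(w)$ at vertices $w$ at a suitable intermediate height below $u$ all lie in a fixed basin $[x^*-\delta,x^*+\delta]$. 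From that height up to $u$ we then run a contraction.

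\textbf{Step 1 (contraction near $x^*$).} Choose $\delta>0$ with $\sup_{|x-x^*|\le\delta} d\,\Phi(x)^{d-1}\Phi'(x)\le \lambda<1$; this is possible by continuity, since at $x^*$ the quantity equals $\phi'(x^*)<1$. Then
\[
\Bigl|\log\prod_w\Phi(x_w)-\log\prod_w\Phi(y_w)\Bigr|\le \lambda' \max_w|\log x_w-\log y_w|
\]
for all $x_w,y_w$ in the basin. The basin is also invariant: $\prod_w\Phi(x_w)$ lies between $\phi(\min x_w)$ and $\phi(\max x_w)$, which stay in the basin because $\phi$ is monotone and $\phi(x)-x$ changes sign at $x^*$. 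Consequently, if all messages at depth $H(u)-k$ relative to $u$ lie in the basin for both $\xi$ and $\mathbf 1$, the discrepancy at $u$ is at most $C\lambda'^{\,H(u)-k}$.

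\textbf{Step 2 (amplification; the main obstacle).} We must show that for $k\approx\sqrt{H(u)}$ (or a multiple of it), every vertex $w$ at height $k$ below $u$ has $f^{\xi}(w)$ in the basin, except on an event of probability $\exp(-c\,d^{\sqrt{H(u)}})$. Monotonicity of $\Phi$ gives $f^{\xi}(w)\ge f^{A}(w)$, where $A\subseteq\mathcal C_1(\xi)$ is the Bernoulli$(\theta)$ subset from the coupling. This holds because the messages are coordinatewise monotone in the leaf data: enlarging the wired set raises some leaf messages from $1$ to $\infty$. For the upper bound, $f^{\xi}\le f^{\mathbf 1}$ for the same reason, and at height $k$ this is $\phi^{(k)}(\infty)$, which is already close to $x^*$. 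For the lower bound, the plan is to amplify. One level above a leaf, a vertex with at least one wired child has message at least $\Phi(\infty)=(1-p)^{-1}>1$. We then define a ``good'' event at height $j$, namely message $\ge 1+\epsilon_j$, and show that good vertices propagate along a Galton--Watson-type recursion. A vertex is good at the next level if at least $m$ of its $d$ children are good, where $\epsilon$ is boosted using $\Phi(1+\epsilon)^m\ge1+(1+\eta)\epsilon$ for a suitable $m$. This is available because $d\widehat p>1$ forces $\phi'(1)>1$; I expect to need a threshold argument in which a constant fraction of good children already pushes the message past a fixed level $1+\epsilon_0$. After that, homogeneous attractivity, applied to the lower envelope $\phi^{(j)}(1+\epsilon_0)\to x^*$, brings it into the basin in a bounded number of further levels.

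\textbf{Step 3 (probability bound).} The good indicators at a fixed level are independent across disjoint subtrees. If the probability of badness is below a small constant at some level, it decays doubly exponentially in the number of levels, since badness requires many bad children. Over $\sim\sqrt{H(u)}$ levels this gives a per-vertex failure probability of order $\exp(-c\,d^{\sqrt{H}})$. A union bound over the $d^{k}$ vertices at the intermediate height is absorbed by choosing $k$ suitably smaller than $\sqrt{H}$, which yields the stated tail. Finally, Fact~\ref{fact:message-probability} combined with Step 1 gives $C\beta^{H(u)}$, with $\beta=\lambda'^{1/2}$ absorbing the levels lost at the bottom. The threshold $\gamma$ ensures that $H(u)$ is large enough for all the constants to take effect. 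The delicate point is Step 2: making quantitative the amplification from density $\theta$ to a uniform level $1+\epsilon_0$ with doubly exponential failure probability.
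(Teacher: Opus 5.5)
Your overall architecture matches the paper's. You lift the messages away from $1$ at height $\ell\approx\sqrt H$ above the leaves, contract toward $x^*$ over the remaining $H-\ell$ levels, and convert with Fact~\ref{fact:message-probability}. Your Step~1 (basin invariance plus a log-Lipschitz bound, whose constant at $x^*$ is exactly $\phi'(x^*)$) is a fine substitute for the paper's sandwich $\phi^{(t)}(1+\varepsilon)\le\widehat f_i\le\phi^{(t)}(K)$ in Lemma~\ref{lem:uniform-contraction}.

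The gap is Step~2, which carries the whole difficulty, and the binary threshold mechanism you sketch does not work. Since $\Phi$ is concave with $\Phi'(1)=\widehat p<1$, one has $\Phi(1+\epsilon)<1+\epsilon$ and $\Phi(1+\epsilon)^m\approx1+m\widehat p\,\epsilon$, so any boost forces $m\ge2$. Three problems follow.
\begin{enumerate}[(i)]
\item From a small density the goodness probability shrinks, $g\mapsto\mathbb P(\mathrm{Bin}(d,g)\ge m)\approx\binom dm g^m\ll g$. A fixed-level good/bad scheme therefore cannot be started from $\theta$.
\item If $1/d<\widehat p\le1/(d-1)$ (for $\Delta=3$ this is the entire range $p>\pc$), the boost forces $m=d$. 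Then a single bad child spoils the parent, and the badness probability grows like $d\,b$ instead of decaying doubly exponentially.
\item Even when some $m\le d-1$ works, the doubly exponential base is $d-m+1<d$. So Step~3's rate $\exp(-c\,d^{\sqrt H})$ does not follow with $\ell\approx\sqrt H$; you would need a larger multiple.
\end{enumerate}

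The missing idea is a deterministic, graded amplification bound that needs no stability of a probabilistic recursion (Lemma~\ref{lem:density-amplification}). Choose $\lambda\in(1/d,\widehat p)$ and $\eta>0$ with $\log\Phi(e^z)\ge\lambda\min\{z,\eta\}$. Set $s:=\log(1/\lambda)/\log d\in(0,1)$, so that $\lambda d^s=1$. Then $a(v):=\min\{\log f^\xi(v),\eta\}/\eta$ satisfies $a(v)\ge\min\{1,\lambda\sum_w a(w)\}$. Subadditivity of $x\mapsto x^s$ then gives $f^\xi(v)\ge\exp\bigl(\eta\,\alpha_\xi(v)^s\bigr)$, where $\alpha_\xi(v)$ is the fraction of wired leaves below $v$; only $d\widehat p>1$ is used.

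Randomness then enters once, through a Chernoff bound for the Bernoulli$(\theta)$ set $A\subseteq\mathcal C_1(\xi)$. The $d^\ell$ leaves below a vertex at height $\ell$ have $A$-density at least $\theta/2$ except with probability $\exp(-\theta d^\ell/8)$. The union bound runs over the roughly $\Delta d^{H-\ell-1}$ vertices at that level, not $d^\ell$ of them. This costs only $O(H)$ in the exponent, which $d^{\sqrt H}$ absorbs, so there is no need to shrink $\ell$ below $\sqrt H$.

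A second, fixable omission concerns re-rooting. The quantity $\mu_{\dreg}^{\xi}(u\sim\mathcal C_1(\xi))$ is a full-tree probability, so Fact~\ref{fact:message-probability} must be applied after re-rooting $\dreg$ at $u$. Working only with descendants of $u$, as you do, controls the subtree probability when $u\ne\rho$. After re-rooting, the vertices at distance $H-\ell$ from $u$ that are not descendants of $u$ do not all carry $\phi^{(\ell)}(\infty)$ under $\mathbf 1$. Their lower bound uses the fact that each has a child whose subtree is a complete $d$-ary tree of height at least $\ell$, as in the second case of Lemma~\ref{lem:uniform-positive}.
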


The proof has two ingredients. First, a positive density of wired leaves pushes the messages uniformly away from the unstable fixed point $1$. Second, once the messages lie in a compact subset of $(1,\infty)$, the scalar attractivity of $x^*$ gives exponential contraction for arbitrary non-homogeneous message assignments.

\subsection{Amplification away from the trivial fixed point}
\label{subsec:amplification}

We first establish a deterministic estimate relating the density of wired leaves below a vertex to the message at that vertex.

\begin{lemma}
\label{lem:density-amplification}
Let $0<q<1, d\ge2, p>\pc$. Let $\xi$ be a single-component boundary condition on $\partial \dary$. For a vertex $u\in V(\dary)$, let $H(u):=\dist\bigl(u,\partial \dary\bigr)$ and define $\alpha_\xi(u):=
\left|
    \mathcal C_1(\xi)
    \cap
    \partial \dary^u
\right| / d^{H(u)}$. 
Then there exist constants $\eta>0$ and $s\in(0,1)$, depending only on $p,q,$ and $d$, such that $    f^\xi(u)\ge\exp\left(\eta\,\alpha_\xi(u)^s\right)$.
\end{lemma}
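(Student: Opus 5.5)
The plan is induction on the height $H(u)$, carried out on the logarithmic scale. Write $L(u):=\log f^\xi(u)\in[0,\infty]$ and $\ell(y):=\log\Phi(e^y)$ for $y\in[0,\infty]$, with $\ell(\infty)=-\log(1-p)$. Lemma~\ref{lem:tree-message-recursion} then reads
\[
    L(u)=\sum_{w\in N(u)}\ell\bigl(L(w)\bigr).
\]
On $\dary$ every internal vertex has exactly $d$ children, and $H(w)=H(u)-1$ for each child $w$. Hence the densities average:
\[
    \alpha_\xi(u)=\frac1d\sum_{w\in N(u)}\alpha_\xi(w).
\]
The goal is to show that the profile $F(\alpha):=\eta\alpha^s$ is a subsolution of this averaged recursion.

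\textbf{Step 1: a concave linear-then-flat minorant of $\ell$.} The function $\ell$ is increasing and satisfies $\ell(0)=0$. Its derivative at $0$ is
\[
    \ell'(0)=\Phi'(1)=\frac{p}{p+q(1-p)}=\widehat p,
\]
as in the computation $\phi'(1)=d\widehat p$. Since $p>\pc$, we have $d\widehat p>1$, so we may fix $a\in(1/d,\widehat p)$. By differentiability at $0$ there is $y_0>0$ with $\ell(y)\ge ay$ for all $y\in[0,y_0]$. Set
\[
    g(y):=\min(ay,\,ay_0).
\]
Monotonicity of $\ell$ gives $\ell\ge g$ on all of $[0,\infty]$, including the value $y=\infty$. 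Moreover $g$ is concave and nondecreasing.

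\textbf{Step 2: choice of constants and the one-step inequality.} Fix any $s\in(0,1)$ and set $\eta:=y_0$. Since $F$ is concave, nondecreasing and nonnegative on $[0,1]$, and $g$ is concave and nondecreasing, the composition $g\circ F$ is concave on $[0,1]$. Because $F(\alpha)\le\eta=y_0$ on $[0,1]$, we are in the linear regime of $g$ and have $g(F(\alpha))=aF(\alpha)$. As $da>1$, this yields
\[
    d\,g\bigl(F(\alpha)\bigr)=da\,F(\alpha)\ge F(\alpha)\qquad\text{for all }\alpha\in[0,1].
\]

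\textbf{Step 3: the induction.} For the base case $H(u)=0$, the vertex $u$ is a leaf. If $u\in\mathcal C_1(\xi)$ then $\alpha=1$ and $f=\infty\ge e^\eta$. Otherwise $\alpha=0$ and $f=1=e^0$.

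For the inductive step, assume $L(w)\ge F(\alpha_\xi(w))$ for every child $w$ of $u$. Monotonicity of $\ell$, the bound $\ell\ge g$, monotonicity of $g$, Jensen's inequality for the concave function $g\circ F$, and Step 2 give, in turn,
\[
    L(u)\ge\sum_{w}g\bigl(F(\alpha_\xi(w))\bigr)\ge d\,g\Bigl(F\Bigl(\tfrac1d\sum_w\alpha_\xi(w)\Bigr)\Bigr)=d\,g\bigl(F(\alpha_\xi(u))\bigr)\ge F\bigl(\alpha_\xi(u)\bigr).
\]
This closes the induction and proves $f^\xi(u)\ge\exp(\eta\,\alpha_\xi(u)^s)$.

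The main obstacle is Step 1: $\ell$ itself need not be concave, so Jensen cannot be applied to it directly. We get around this by replacing $\ell$ with the concave minorant $g$ and choosing $\eta$ small enough that every relevant value stays in the region where $g$ has slope $a>1/d$. The supercriticality $d\widehat p>1$ is used exactly there, as the strict slack that absorbs the averaging over the $d$ children. Because $\alpha^s\ge\alpha$ on $[0,1]$, the argument works for every $s\in(0,1]$. The exponent $s<1$ costs nothing and is kept only to match the form stated in the lemma.
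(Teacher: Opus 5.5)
\textbf{The Jensen step in Step~3 goes the wrong way, so the proof has a genuine gap.} For a concave $h$, Jensen's inequality reads $\frac1d\sum_w h(\alpha_\xi(w))\le h\bigl(\frac1d\sum_w\alpha_\xi(w)\bigr)$. With $h=g\circ F$ this gives $\sum_w g\bigl(F(\alpha_\xi(w))\bigr)\le d\,g\bigl(F(\alpha_\xi(u))\bigr)$, which is the reverse of the inequality your chain uses. For a lower bound, concavity of the profile works against you. Over all ways of splitting the wired mass among the $d$ children, $\sum_w\alpha_\xi(w)^s$ is smallest when all of it sits in one child. There it equals $(d\,\alpha_\xi(u))^s=d^s\alpha_\xi(u)^s$, not $d\,\alpha_\xi(u)^s$. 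So averaging over children costs a factor $d^s$ rather than $d$, and your claim that every $s\in(0,1)$ works is false.

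Here is a concrete counterexample. Wire exactly one leaf of $\dary^u$. Then $\alpha_\xi(u)=d^{-H}$ with $H=H(u)$, every off-path child sends message $1$, and $\log f^\xi(u)=\ell^{(H-1)}\bigl(-\log(1-p)\bigr)$. For $q<1$, write $c:=(q-1)(1-p)\in(-1,0)$. Then $\ell'(y)=p(1+c)/\bigl[(1+ce^{-y})\bigl((1-p)e^{y}+p+c\bigr)\bigr]$ is decreasing. So $\ell$ is in fact concave, contrary to your last paragraph, and $\ell(y)\le\widehat p\,y$. Hence $\log f^\xi(u)\le-\log(1-p)\,\widehat p^{\,H-1}$. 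The required bound $\eta\,d^{-sH}\le\log f^\xi(u)$ therefore fails for large $H$, whatever $\eta>0$ is, whenever $d^s\widehat p<1$, i.e.\ for every $s<\log(1/\widehat p)/\log d$. Your closing remark is also backwards. Since $\alpha^s\ge\alpha$ on $[0,1]$, the bound with $s<1$ is \emph{stronger} than the one with $s=1$, and that is exactly where the cost lies.

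\textbf{How to repair it.} The fix is the paper's argument: replace Jensen by subadditivity of $x\mapsto x^s$, and tie $s$ to the slope $a$. Take $s:=\log(1/a)/\log d$. This lies in $(0,1)$ because $1/d<a<1$, and it gives $a\,d^s=1$. Since $F\le\eta=y_0$ on $[0,1]$, your chain becomes
\[
    L(u)\ge a\eta\sum_{w}\alpha_\xi(w)^s\ge a\eta\Bigl(\sum_{w}\alpha_\xi(w)\Bigr)^s=a\,d^s\eta\,\alpha_\xi(u)^s=\eta\,\alpha_\xi(u)^s.
\]
Any $s\in[\log(1/a)/\log d,1)$ would do.

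As written, your argument is valid only for $s=1$, where $F$ is linear and the Jensen step holds with equality. That yields $f^\xi(u)\ge\exp(\eta\,\alpha_\xi(u))$. This does not imply the lemma as stated, since $\alpha^s/\alpha\to\infty$ as $\alpha\to0$. It would still suffice for Lemma~\ref{lem:uniform-positive}, where only densities at least $\theta/2$ enter.
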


\begin{proof}
We first define an auxiliary function $\mathcal L(z):=\log\Phi(e^z)$ for $z\in[0,\infty)$, with $\mathcal L(\infty):=\log\Phi(\infty)=-\log(1-p).$ Since $\Phi(1)=1$, we have $\mathcal L(0)=0$. Differentiating gives
\[
    \mathcal L'(z)
    =
    \frac{\Phi'(e^z)}
    {\Phi(e^z)}
     e^z
    =
    \frac{p\bigl(1+(q-1)(1-p)\bigr)e^z}
    {\bigl(e^z+(q-1)(1-p)\bigr)
     \bigl((1-p)e^z+p+(q-1)(1-p)\bigr)}
    >0.
\]
Thus, $\mathcal L$ is strictly increasing on $[0,\infty)$. Moreover, $\mathcal L'(0)=\Phi'(1)=p/(p+q(1-p))=\widehat p$.

Since the condition $p>\pc$ is equivalent to $d\widehat p>1$, we may choose $\lambda\in\left(1/d,\widehat p\right)$. By continuity of $\mathcal L'$ at $0$, there exists $\eta>0$ such that $\mathcal L(z)\ge\lambda z$ for every $z\in[0,\eta]$. Since $\mathcal L$ is increasing, this implies
\begin{equation}
\label{eq:log-response-lower-bound}
    \mathcal L(z)
    \ge
    \lambda\min\{z,\eta\}
    \qquad
    \text{for every }z\in[0,\infty].
\end{equation}
Set $s:=\frac{\log(1/\lambda)}{\log d}$. Note that $s\in(0,1)$ and $\lambda d^s=1$. Consider the rooted subtree $T^u$ below $u$. For each vertex $v\in V(T^u)$, define
\[
    a(v)
    :=
    \frac{
        \min\{\log f^\xi(v),\eta\}
    }{\eta}.
\]
Thus $a(v)\in[0,1]$. At a boundary vertex $v$,
\[
    a(v)
    =
    \begin{cases}
        1, & v\in\mathcal C_1(\xi),\\
        0, & v\notin\mathcal C_1(\xi).
    \end{cases}
\]
For an internal vertex $v$, the message recursion and \eqref{eq:log-response-lower-bound} give
\[
    a(v)
    =
    \min\left\{
        1,
        \frac1\eta
        \sum_{w\in N(v)}
        \mathcal L\bigl(\log f^\xi(w)\bigr)
    \right\}
    \ge
    \min\left\{
        1,
        \lambda
        \sum_{w\in N(v)}a(w)
    \right\}.
\]

We prove by induction from the leaves toward $u$ that
\begin{equation}
\label{eq:a-alpha-induction}
    a(v)\ge\alpha_\xi(v)^s
\end{equation}
for every $v\in V(\dary^u)$. The claim holds at the leaves because both sides are either $0$ or $1$. Suppose that it holds for the children of an internal vertex $v$. Since $\alpha_\xi(v)=\frac1d\sum\limits_{w\in N(v)}\alpha_\xi(w)$, we obtain
\[
    a(v)
    \ge
    \min\left\{
        1,
        \lambda
        \sum_{w\in N(v)}
        \alpha_\xi(w)^s
    \right\}.
\]
Because $s\in(0,1)$, the function $x\mapsto x^s$ is subadditive on
$[0,\infty)$. Hence
\[
    \sum_{w\in N(v)}
    \alpha_\xi(w)^s
    \ge
    \left(
        \sum_{w\in N(v)}
        \alpha_\xi(w)
    \right)^s.
\]
Using $\lambda d^s=1$, we obtain $a(v)\ge\min\left\{1,\lambda\bigl(d\alpha_\xi(v)\bigr)^s\right\}=\min\{1,\alpha_\xi(v)^s\}=\alpha_\xi(v)^s$. This proves \eqref{eq:a-alpha-induction}.

Applying the bound at $u$ gives $\min\{\log f^\xi(u),\eta\}\ge\eta\alpha_\xi(u)^s$. Since $\alpha_\xi(u)^s\le1$, it follows that $\log f^\xi(u)\ge\eta\alpha_\xi(u)^s$, which proves the result.
\end{proof}

The preceding lemma is the point at which the assumption $q<1$ enters the spatial-mixing argument in a particularly useful form. It replaces the positive-association and stochastic-domination arguments commonly used when $q>1$ in~\cite[Lemma 2.8]{blanca2026uniqueness}. The only required supercriticality condition is $d\cdot \mathcal L'(0) = d\widehat p >1$, which is exactly equivalent to $p>\pc$.

We now apply Lemma~\ref{lem:density-amplification} to a $\theta$-wired random boundary condition.

\begin{lemma}
\label{lem:uniform-positive}
Let $0<q<1, \Delta\ge3, d=\Delta-1, p>\pc$, and $\theta\in(0,1]$. Let $\mathsf M$ be a $\theta$-wired distribution on single-component boundary conditions on $\partial\dreg$, and let $\xi\sim\mathsf M$. Fix $u\in V(\dreg)$ and write $H:=\dist(u,\partial\dreg)$, where $\partial\dreg$ is the original leaf boundary. Let $k,\ell\ge1$ satisfy $k+\ell=H$. View $\dreg$ as rooted at $u$, and for $x\in\partial B_k(u)$, let $f_u^\xi(x)$ denote the message sent toward $u$ from the descendent subtree below $x$, using the same global wired component $\mathcal C_1(\xi)$ restricted to the original boundary vertices in that subtree.

There exist constants $\varepsilon>0$ and $K<\infty$, depending only on $p,q,d,$ and $\theta$, such that, with probability at least 
\[
    1-
    \exp\left(
        -\frac{\theta d^\ell}8
        +\log\Delta
        +(k-1)\log d
    \right),
\]
we have $1+\varepsilon \le f_u^\xi(x) \le K$ for every $x\in\partial B_k(u)$.
\end{lemma}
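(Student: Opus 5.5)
The upper bound is deterministic and the lower bound is probabilistic. For the upper bound, fix $x\in\partial B_k(u)$. Since $k\ge1$, $x\ne u$, so in the tree rooted at $u$ the vertex $x$ has a parent. Its descendant subtree has height $\ell\ge1$ and is a complete $d$-ary tree $\mathcal T_\ell$; when $u\neq\rho$, one should check that the subtree pointing away from $u$ below $x$ still reaches the original leaves at distance exactly $\ell$, which holds in the ball geometry. So $x$ is internal. By Lemma~\ref{lem:tree-message-recursion}, $f_u^\xi(x)=\prod_{w}\Phi(f_u^\xi(w))$ over the $d$ children of $x$. Since $\Phi$ is increasing with $\Phi(\infty)=(1-p)^{-1}$, this gives $f_u^\xi(x)\le (1-p)^{-d}=:K$ for every boundary condition.

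For the lower bound, apply Lemma~\ref{lem:density-amplification} to the subtree below $x$. That lemma is stated for $\dary$, but it only uses the recursion on a complete $d$-ary tree, so it applies verbatim to the height-$\ell$ subtree. It gives
\[
f_u^\xi(x)\ge\exp\bigl(\eta\,\alpha_\xi(x)^s\bigr),
\qquad
\alpha_\xi(x)=\frac{|\mathcal C_1(\xi)\cap\partial T^x|}{d^\ell}.
\]
So it suffices to show $\alpha_\xi(x)\ge\theta/2$ for all $x\in\partial B_k(u)$ simultaneously. We then take $\varepsilon:=\exp(\eta(\theta/2)^s)-1$.

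For the probability estimate, use the coupling from Definition~\ref{def:theta-wired}: $A\subseteq\mathcal C_1(\xi)$ with $A$ a Bernoulli$(\theta)$ subset of the leaves. Then $|\mathcal C_1(\xi)\cap\partial T^x|\ge|A\cap\partial T^x|$, which is $\mathrm{Bin}(d^\ell,\theta)$. The multiplicative Chernoff bound gives
\[
\mathbb P\bigl(\mathrm{Bin}(n,\theta)<n\theta/2\bigr)\le\exp(-n\theta/8),\qquad n=d^\ell.
\]
Finally, take a union bound over $x\in\partial B_k(u)$. The sphere of radius $k$ has at most $\Delta d^{k-1}$ vertices, and this count stays valid when $u$ is not the center of $\dreg$ if one restricts to the vertices that actually have height-$\ell$ subtrees. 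This produces exactly the factor $\exp(\log\Delta+(k-1)\log d)$ in the stated bound.

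I expect the main obstacle to be the geometric bookkeeping when $u\ne\rho$. Rerooting at $u$ means some vertices at distance $k$ from $u$ lie toward $\rho$, and their subtrees are not complete $d$-ary trees of height $\ell$; they can even contain leaves at distance greater than $\ell$, so $\alpha$ must be normalized correctly. I would first try to interpret the statement as concerning the descendant subtrees whose leaves lie at distance exactly $\ell$, which is the case $H=\dist(u,\partial\dreg)$ ensures for the downward directions. If upward directions must be included, I would extend Lemma~\ref{lem:density-amplification} to trees in which every internal vertex has at least $d$ children. The induction there goes through with $\alpha$ defined via a weighting by depth, taking a subtree containing a complete $d$-ary tree of depth $\ell$ and using monotonicity of $\Phi$ to discard the extra branches, since each extra factor $\Phi(\cdot)\ge1$.
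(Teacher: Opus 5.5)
Your upper bound $K=(1-p)^{-d}$ is correct for all $x$, since every vertex of the sphere is internal with exactly $d$ children away from $u$. Your lower bound is also fine for vertices $x\in\partial B_k(u)$ that lie below $u$ in the original rooting at $\rho$.

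The gap is the claim in your first paragraph that, in the ball geometry, the subtree below every such $x$ is a complete $d$-ary tree of height $\ell$. This fails when the path from $u$ to $x$ first takes $j\ge1$ steps toward $\rho$. There are two cases:
\begin{enumerate}[(i)]
\item If $x$ is not an ancestor of $u$, its subtree is complete of height $\ell+2j$. Your normalization by $d^\ell$ is then wrong, but this is easily fixed.
\item If $x$ is the ancestor of $u$ at distance $k$ (which happens when $k<\dist(\rho,u)$), its subtree contains the branch through its parent and has leaves at many different depths. Neither Lemma~\ref{lem:density-amplification} nor the $\mathrm{Bin}(d^\ell,\theta)$ Chernoff bound applies as written.
\end{enumerate}
You notice this at the end, but your first remedy, restricting to downward directions, proves a weaker statement. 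That is not enough: the proof of Theorem~\ref{thm:wsm} feeds \emph{all} boundary messages of $B_k(u)$ into Lemma~\ref{lem:uniform-contraction}. No restriction is needed for the count either: since $k<H$, the ball $B_k(u)$ avoids the leaves and $|\partial B_k(u)|=\Delta d^{k-1}$ exactly.

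The paper handles every non-descendant $x$ with one extra step. Since $d\ge2$, $x$ has an original child $y$ off the path to $u$. This $y$ is also a child in the rooting away from $u$, and its subtree is a complete $d$-ary tree of height $\ell+2j-1\ge\ell$. Chernoff and amplification give $f_u^\xi(y)\ge1+\varepsilon_0$ off an event of probability at most $e^{-\theta d^\ell/8}$. The remaining factors at $x$ are $\Phi(\cdot)\ge1$, so $f_u^\xi(x)\ge\Phi(1+\varepsilon_0)$. Because $\Phi(z)<z$ for $z>1$, one must take $\varepsilon:=\min\{\varepsilon_0,\Phi(1+\varepsilon_0)-1\}$ rather than your $\varepsilon_0$.

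Your second remedy also works if carried out:
\begin{enumerate}[(i)]
\item The induction in Lemma~\ref{lem:density-amplification} uses only $\alpha(v)=\frac1d\sum_{w\in N(v)}\alpha(w)$. So it holds verbatim on the tree rerooted at $u$, where every vertex other than $u$ has exactly $d$ children, with $\alpha(x)=\sum_z d^{-\dist(x,z)}\mathbf 1\{z\in\mathcal C_1(\xi)\}$ summed over leaves $z$ below $x$.
\item Every such leaf satisfies $\dist(x,z)=\dist(u,z)-k\ge H-k=\ell$. Replacing $\mathcal C_1(\xi)$ by the coupled set $A$ and multiplying by $d^\ell$ gives a sum of independent $[0,1]$-valued terms with mean $\theta d^\ell$.
\item The multiplicative Chernoff bound for such sums (not the binomial one you wrote) then yields the same $e^{-\theta d^\ell/8}$, this time with $\varepsilon=\varepsilon_0$.
\end{enumerate}
As submitted, though, neither fix is executed, so the argument only covers the downward directions.
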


\begin{proof}
Couple $\xi\sim\mathsf M$ with a Bernoulli subset $A\subseteq\partial\dreg$ as in Definition~\ref{def:theta-wired}, so that $A\subseteq\mathcal C_1(\xi)$. Fix $x\in\partial B_k(u)$, and orient $\dreg$ away from $u$. Let $h$ be the height of $\dreg$ and set $t:=\dist(\rho,u)$, so that $H=h-t$. There are two cases.

Suppose first that $x$ is a descendant of $u$ in the original rooting at $\rho$. Then the subtree below $x$ is a complete $d$-ary tree of height $H-k=\ell$. By the Chernoff bound, except with probability at most $\exp\left(-\frac{\theta d^{\ell}}8\right)$, at least a $\theta/2$ fraction of the leaves of this branch belong to
$A$. On this event, Lemma~\ref{lem:density-amplification} gives
\[
    f_u^\xi(x)
    \ge
    \exp\left(
        \eta\left(\frac\theta2\right)^s
    \right)
    =:
    1+\varepsilon_0
\]
for some constant $\eta > 0$ and $s \in (0, 1)$. Set $\varepsilon:=\min\left\{\varepsilon_0,\Phi(1+\varepsilon_0)-1\right\}>0$.

Suppose next that $x$ is not a descendant of $u$. Let $j\ge1$ be the number of initial steps toward $\rho$ on the path from $u$ to $x$. Since $d\ge2$, $x$ has a child $y$ in the original rooting that is not on the path to $u$. This is also a child of $x$ in the rooting away from $u$, and its subtree is a complete $d$-ary tree of height $h-(t+k-2j+1)=\ell+2j-1\ge\ell$. Therefore, except with probability at most $\exp\left(-\frac{\theta d^\ell}{8}\right)$, we have the same estimate $f_u^\xi(y)\ge1+\varepsilon_0$. Since every other child message entering $x$ is at least $1$, the message recursion gives
\[
    f_u^\xi(x)
    \ge
    \Phi(1+\varepsilon_0)\Phi(1)^{d-1}
    =
    \Phi(1+\varepsilon_0)
    \ge
    1+\varepsilon.
\]

Since $|\partial B_k(u)|=\Delta d^{k-1}$, a union bound gives the stated probability. Finally, every $x\in\partial B_k(u)$ has $d$ children. Hence $f_u^\xi(x)\le\Phi(\infty)^d=(1-p)^{-d}$. The result follows with $K=(1-p)^{-d}$.
\end{proof}

\subsection{Contraction inside the positive basin}
\label{subsec:contraction}

Once all messages lie in a compact interval contained in $(1,\infty)$, the non-homogeneous recursion can be controlled using the homogeneous map $\phi$.

\begin{lemma}
\label{lem:uniform-contraction}
Let $0<q<1$, $d\ge2$, and $p>\pc$. Fix constants $\varepsilon>0$ and $K<\infty$. Consider the complete $d$-ary tree $\mathcal T_k$ of height $k$, and let $g_1,g_2:\partial\mathcal T_k\to[1+\varepsilon,K]$ be two boundary message assignments. Let $\widehat f_1$ and $\widehat f_2$ be the message functions obtained by propagating $g_1$ and $g_2$ toward the root using
\[
    \widehat f_i(u)
    =
    \prod_{w\in N(u)}
    \Phi\bigl(\widehat f_i(w)\bigr).
\]
There exist constants $C_0<\infty$ and $\beta\in(0,1)$, depending only on $p,q,d,\varepsilon,$ and $K$, such that for every $u\in V(\mathcal T_k)$,
\[
    \left|
        \widehat f_1(u)-\widehat f_2(u)
    \right|
    \le
    C_0
        \beta^{\dist(u,\partial\mathcal T_k)}.
\]

Moreover, let $\Delta=d+1$. The same estimate holds for the finite $\Delta$-regular tree $\widehat{\mathcal T}_k$ of height $k$ with boundary message assignments $g_1,g_2:\partial\widehat{\mathcal T}_k\to[1+\varepsilon,K]$.
\end{lemma}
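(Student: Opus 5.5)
The plan is to work in logarithmic coordinates and compare the non-homogeneous recursion with the homogeneous map $\phi$ at the fixed point $x^*$.

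\emph{Step 1: invariant interval.} Set $a:=\min\{1+\varepsilon,\phi(1+\varepsilon)\}$ and $b:=\max\{K,(1-p)^{-d}\}$. Because $\Phi$ is increasing, a vertex all of whose children carry messages in $[a,b]$ receives a message in $[\phi(a),\phi(b)]$. By \eqref{eq:phi-crossing-sign} and monotonicity, $\phi(a)\ge a$ whenever $a\le x^*$; if instead $a>x^*$, then $\phi(a)\ge\phi(x^*)=x^*$. In either case, replacing $a$ by $\min\{a,x^*\}$ makes $[a,b]$ invariant. Likewise $\phi(b)\le(1-p)^{-d}\le b$. So every message, for both $\widehat f_1$ and $\widehat f_2$, stays in a fixed compact interval $I=[a,b]\subset(1,\infty)$.

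\emph{Step 2: sandwich by homogeneous iterates.} By monotonicity of $\Phi$, induction from the leaves gives
\[
\phi^{(m)}(a)\le\widehat f_i(u)\le\phi^{(m)}(b)\qquad\text{for }m=\dist(u,\partial\mathcal T_k).
\]
Hence $|\widehat f_1(u)-\widehat f_2(u)|\le\phi^{(m)}(b)-\phi^{(m)}(a)$.

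\emph{Step 3: exponential convergence of the iterates.} This is the main step. By Proposition~\ref{prop:attractivity}, $\phi^{(m)}(a)\to x^*$ and $\phi^{(m)}(b)\to x^*$, and $0<\phi'(x^*)<1$. Choose $\beta\in(\phi'(x^*),1)$. By continuity of $\phi'$ there is $\delta>0$ with $\phi'\le\beta$ on $[x^*-\delta,x^*+\delta]$. By attractivity, and since the iterates of $a$ and $b$ are monotone, there is a finite $m_0$, depending only on $a,b$ and hence only on $p,q,d,\varepsilon,K$, such that both iterates lie in this window for all $m\ge m_0$. Beyond $m_0$, the mean value theorem gives
\[
\phi^{(m)}(b)-\phi^{(m)}(a)\le\beta^{m-m_0}\bigl(\phi^{(m_0)}(b)-\phi^{(m_0)}(a)\bigr)\le 2\delta\,\beta^{m-m_0}.
\]
For $m<m_0$, use the trivial bound $b-a$. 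Taking $C_0:=\max\{2\delta,\,b-a\}\,\beta^{-m_0}$ yields the estimate for every $u$.

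\emph{Step 4: the $\Delta$-regular tree.} In $\widehat{\mathcal T}_k$ every non-root vertex has $d$ children, so Steps 1--3 apply verbatim to every $u\neq\rho$. At the root there are $d+1$ children, so the sandwich becomes $\Phi(\phi^{(k-1)}(a))^{d+1}$ versus $\Phi(\phi^{(k-1)}(b))^{d+1}$. The map $x\mapsto\Phi(x)^{d+1}$ is Lipschitz on $I$ with a constant depending only on $p,q,d,\varepsilon,K$, so the difference at the root is at most a constant times $\beta^{k-1}$. Enlarging $C_0$ absorbs this constant.

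The only real obstacle is the uniformity of $m_0$ in Step 3. It is handled by noting that the sandwich reduces everything to two explicit scalar orbits determined by $a$ and $b$, and these depend only on the parameters.
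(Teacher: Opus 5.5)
Your proposal is correct and follows essentially the same route as the paper: sandwich both message functions between the homogeneous orbits $\phi^{(m)}$ of the two extreme boundary values, use attractivity and $\phi'(x^*)<1$ with the mean value theorem near $x^*$, and handle the root of $\widehat{\mathcal T}_k$ with a Lipschitz bound for $x\mapsto\Phi(x)^{d+1}$. The only difference is your invariant-interval Step 1. The paper does without it, covering the finitely many levels $t<t_0$ by enlarging $C_0$ directly.
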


\begin{proof}
Define two scalar sequences $(a_k)_{k\ge0}$ and $(b_k)_{k\ge0}$ by $a_{t+1}=\phi(a_t)$, $b_{t+1}=\phi(b_t)$ with $a_0=1+\varepsilon$, $b_0=K$. Since $\Phi$ is increasing, an induction from the boundary shows that every propagated message at distance $t$ from the boundary lies in $[a_t,b_t]$. In particular, for every vertex $u\in V(\mathcal T_k)$ with $\dist(u,\partial\mathcal T_k)=t$, $a_t\le\widehat f_i(u)\le b_t$ for $i\in\{1,2\}$, and therefore $\left|\widehat f_1(u)-\widehat f_2(u)\right|\le b_t-a_t$.

By Proposition~\ref{prop:attractivity}, we have $a_t\to x^*, b_t\to x^*$ as $t\to\infty$. Moreover, $0<\phi'(x^*)<1$. Choose $\delta>0$ sufficiently small that
\[
    \beta
    :=
    \sup_{x\in[x^*-\delta,x^*+\delta]}
    \phi'(x)
    <1.
\]
Since both sequences converge to $x^*$, there exists $t_0<\infty$ such that $a_t,b_t \in [x^*-\delta,x^*+\delta]$ for every $t\ge t_0$.

For every $t\ge t_0$, the mean value theorem gives $b_{t+1}-a_{t+1}=\phi(b_t)-\phi(a_t)\le\beta\bigl(b_t-a_t\bigr)$. Consequently,
\[
    b_t-a_t
    \le
    (b_{t_0}-a_{t_0})\beta^{t-t_0}
    \qquad
    \text{for every }t\ge t_0.
\]
Thus $b_t-a_t\le C_0\beta^t$ for every $t\ge t_0$, where $C_0:=(b_{t_0}-a_{t_0})\beta^{-t_0}$. Since there are only finitely many indices $0\le t<t_0$, enlarging $C_0$ if necessary makes the same bound valid for every $t\ge0$.

The finite $\Delta$-regular tree $\widehat{\mathcal T}_k$ differs from the complete $d$-ary tree only at the root $\rho$. Therefore, it suffices to prove the estimate separately at $\rho$. Each of the $\Delta$ branches below the root is a complete $d$-ary tree of height $k-1$. Thus, if $v_1,\ldots,v_\Delta$ are the children of $\rho$, then
\[
    \left|
        \widehat f_1(v_i)-\widehat f_2(v_i)
    \right|
    \le
    C_0\beta^{k-1}
\]
for every $i\in\{1,\ldots,\Delta\}$.

The root update is given by $\Phi_\Delta(x_1,\ldots,x_\Delta):=\prod_{i=1}^{\Delta}\Phi(x_i)$. Since all relevant messages lie in a fixed compact interval $\left[1, \max\{K, (1-p)^{-d}\}\right]$, $\Phi_\Delta$ is uniformly Lipschitz on the corresponding product
space. Therefore,
\[
    \left|
        \widehat f_1(\rho)-\widehat f_2(\rho)
    \right|
    \le
    L_\Delta
    \sum_{i=1}^{\Delta}
    \left|
        \widehat f_1(v_i)-\widehat f_2(v_i)
    \right|
    \le
    L_\Delta\Delta C_0\beta^{k-1}
\]
where $L_\Delta<\infty$ is a Lipschitz constant of $\Phi_\Delta$. The result follows after absorbing the additional constants and the factor $\beta^{-1}$ into $C_0$.
\end{proof}

\subsection{Proof of weak spatial mixing}
\label{subsec:proof-wsm}

\begin{proof}[Proof of Theorem~\ref{thm:wsm}]
Fix a vertex $u\in V(\dreg)$, and write $H:=H(u)=\dist\bigl(u,\partial\dreg\bigr)$. We now view the entire tree $\dreg$ as rooted at $u$ and denote the tree by $\dreg^{(u)}$. Set $\ell:=\left\lceil\sqrt H\right\rceil$ and $k:=H-\ell$. 

We take $\gamma\ge4$, so that $k,\ell\ge1$ whenever $H\ge\gamma$. Since $k<H$, the ball $B_k(u)$ does not meet $\partial\dreg$. Consequently, when rooted at $u$, the ball $B_k(u)$ is exactly the finite $\Delta$-regular tree of height $k$: its root has $\Delta$ children and every other internal vertex has $d=\Delta-1$ children. For the remainder of the proof, write $f_u^\xi(v)$ for the message computed after viewing the entire tree $\dreg$ as rooted at $u$. 

By Lemma~\ref{lem:uniform-positive}, with probability at least $1-\exp\left(-\frac{\theta d^\ell}{8}+\log \Delta + (k-1)\log d\right)$, the boundary messages entering $B_k(u)$ satisfy $1+\varepsilon \le f^\xi(x) \le K$ with some constant $\varepsilon>0$ and $K<\infty$ for every $x\in\partial B_k(u)$. The same proof, with all-wired, gives these bounds deterministically under the all-wired boundary condition.

We apply Lemma~\ref{lem:uniform-contraction} to $B_k(u)$, using the two boundary message assignments $\bigl(f_u^\xi(x)\bigr)_{x\in\partial B_k(u)}$ and $\bigl(f_u^{\mathbf 1}(x)\bigr)_{x\in\partial B_k(u)}$. It follows that
\[
    \left|
        f_u^\xi(u)-f_u^{\mathbf 1}(u)
    \right|
    \le
    C_0\beta_0^k
\]
for some constants $C_0<\infty$ and $\beta_0\in(0,1)$.

Therefore Fact~\ref{fact:message-probability} gives
\[
\left|
    \mu_{\dreg}^{\mathbf 1}
    \bigl(u\sim\partial\dreg\bigr)
    -
    \mu_{\dreg}^{\xi}
    \bigl(u\sim\mathcal C_1(\xi)\bigr)
\right|
\le
\frac1q
\left|
    f_u^{\mathbf 1}(u)-f_u^\xi(u)
\right|
\le
\frac{C_0}{q}\beta_0^k.
\]

Since $k=H-\lceil\sqrt H\rceil$, we may choose a constant $\beta\in(\beta_0,1)$ and then choose $\gamma$ sufficiently large so that $\beta_0^{H-\lceil\sqrt H\rceil} \le\beta^H$ for every $H\ge\gamma$. After absorbing the factor $1/q$ into the
constant, we obtain
\[
\left|
    \mu_{\dreg}^{\mathbf 1}
    \bigl(u\sim\partial\dreg\bigr)
    -
    \mu_{\dreg}^{\xi}
    \bigl(u\sim\mathcal C_1(\xi)\bigr)
\right|
\le
C\beta^H.
\]

Finally, for all sufficiently large $H$, $\theta d^{\lceil\sqrt H\rceil}/8-\log\Delta-\bigl(H-\lceil\sqrt H\rceil-1\bigr)\log d\ge c d^{\sqrt H}$ for some constant $c>0$. Thus the preceding estimate fails with
probability at most $\exp\left(-c d^{\sqrt H}\right)$. This proves the theorem.
\end{proof}

\begin{remark}
\label{rem:wsm-rate}
Let $\beta^*:=\phi'(x^*)\in(0,1)$. The contraction rate in Lemma~\ref{lem:uniform-contraction} may be chosen arbitrarily close to $\beta^*$. More precisely, for every $\delta\in(0,1-\beta^*)$, there exists $C_\delta<\infty$ such that, on the good event used in the proof of Theorem~\ref{thm:wsm},
\[
\left|
    \mu_{\dreg}^{\mathbf 1}
    \bigl(u\sim\partial\dreg\bigr)
    -
    \mu_{\dreg}^{\xi}
    \bigl(u\sim\mathcal C_1(\xi)\bigr)
\right|
\le
C_\delta
\bigl(\beta^*+\delta\bigr)^{
    H(u)-\lceil\sqrt{H(u)}\rceil
}.
\]
For the weak spatial mixing statement, we use the simpler bound
$C\beta^{H(u)}$ with a fixed $\beta\in(\beta^*,1)$.
\end{remark}

To prove uniqueness in the following section, we need a local version of Theorem~\ref{thm:wsm}. Although Theorem~\ref{thm:wsm} is stated for connection probabilities at a single vertex, the message estimate in its proof controls every fixed finite-dimensional marginal.

\begin{corollary}[Weak spatial mixing for local marginals]
\label{cor:local-wsm}
Let $0<q<1$, $\Delta\ge3$, $p>\pc$, and $\theta\in(0,1]$. Let $\mathsf M$ be a $\theta$-wired distribution on single-component boundary conditions on $\partial\dreg$, and let $\xi\sim\mathsf M$. Fix a finite edge set $\Lambda\subset E(\mathcal T_\infty)$, and suppose that $\Lambda\subset E(\dreg)$. Write $r_h:=\dist(\Lambda,\partial\dreg)$.
There exist constants $A_\Lambda,C_\Lambda>0$, $c>0$, $\beta\in(0,1),$ and $\gamma>0$, depending only on $p,q,d,\theta,$ and $\Lambda$, such that, whenever $r_h\ge\gamma$,
\[
\mathbb P_{\xi\sim\mathsf M}
\left(
    \left\|
        \mu_{\dreg}^{\xi}\vert_\Lambda
        -
        \mu_{\dreg}^{\mathbf 1}\vert_\Lambda
    \right\|_{\mathrm{TV}}
    >
    C_\Lambda\beta^{r_h}
\right)
\le
A_\Lambda
\exp\left(
    -c d^{\sqrt{r_h}}
\right).
\]
Consequently,
\[
\mathbb E_{\xi\sim\mathsf M}
\left[
    \left\|
        \mu_{\dreg}^{\xi}\vert_\Lambda
        -
        \mu_{\dreg}^{\mathbf 1}\vert_\Lambda
    \right\|_{\mathrm{TV}}
\right]
\le
C_\Lambda\beta^{r_h}
+
A_\Lambda
\exp\left(
    -c d^{\sqrt{r_h}}
\right).
\]
\end{corollary}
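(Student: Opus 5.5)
The idea is to reduce the local total-variation distance to message differences at a bounded number of vertices, and then apply the contraction estimates already established. Let $V_\Lambda$ be the set of endpoints of edges in $\Lambda$, and let $S$ be the smallest subtree of $\dreg$ spanning $V_\Lambda$ together with $\rho$ (if $\rho$ is far away, any fixed vertex of $V_\Lambda$ may serve as base point instead). The edge set of $S$ is finite and depends only on $\Lambda$. Removing $E(S)$ from $\dreg$ leaves rooted pendant subtrees $T^w$, one for each vertex $w$ hanging off $S$, and every such $w$ satisfies $\dist(w,\partial\dreg)\ge r_h-\mathrm{diam}(S)$.

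\textbf{Step 1: exact finite-dimensional description.} Under a single-component boundary condition $\xi$, the law of $\omega$ on $E(S)$ is determined exactly by the messages of the pendant subtrees. Each pendant subtree interacts with the rest of the tree only through whether its root is joined to $\mathcal C_1(\xi)$. Summing it out therefore yields the factor $Z_{T^w}^{\xi_w,0}$ times either $q\cdot(\text{cluster weight})$ or $(f^\xi(w)-1)$, as in the computation proving Lemma~\ref{lem:tree-message-recursion}. Consequently $\mu^\xi_{\dreg}\vert_{E(S)}$ is a rational function of the vector $(f^\xi(w))_w$, whose denominator is bounded below uniformly once every message lies in $[1,\max\{K,(1-p)^{-d}\}]$. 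Here $K=(1-p)^{-d}$ bounds every internal message, since each vertex has at most $\Delta$ children. On that compact set the map $(f(w))_w\mapsto \mu\vert_{E(S)}$ is Lipschitz with a constant $L_\Lambda$ depending only on $\Lambda$, $p$, $q$ and $d$. Since $\Lambda\subset E(S)$, total variation on $\Lambda$ is bounded by total variation on $E(S)$.

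\textbf{Step 2: message comparison.} For each pendant root $w$, set $H_w:=\dist(w,\partial\dreg)\ge r_h-D_\Lambda$. Repeat the argument from the proof of Theorem~\ref{thm:wsm}, but rooted at $w$ and restricted to the subtree $T^w$, which is a complete $d$-ary tree of height $H_w$. Take $\ell=\lceil\sqrt{H_w}\rceil$ and $k=H_w-\ell$. Lemma~\ref{lem:uniform-positive} (its descendant case alone suffices here) puts all messages at depth $k$ in $[1+\varepsilon,K]$ with failure probability at most $\exp(-\theta d^\ell/8+k\log d)$. Lemma~\ref{lem:uniform-contraction} then gives $|f^\xi(w)-f^{\mathbf 1}(w)|\le C_0\beta_0^{k}$. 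A union bound over the at most $|E(S)|\,d$ pendant roots produces the factor $A_\Lambda$. Absorbing $D_\Lambda$ and the $\sqrt{H}$ loss into the constants, exactly as at the end of the proof of Theorem~\ref{thm:wsm}, gives the bound $C_\Lambda\beta^{r_h}$ on the good event for $r_h\ge\gamma$.

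\textbf{Step 3: expectation bound.} The total-variation distance is at most $1$, and $C_\Lambda$ can be taken at least $1$. Splitting the expectation over the good event and its complement then gives the second display.

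The main obstacle is Step 1: writing the marginal on $E(S)$ cleanly in terms of the pendant messages. The complication is that $\kappa$ counts clusters globally, so several pendant subtrees connected to $\mathcal C_1$ all merge into the single wired cluster. I would handle this by summing over configurations $\omega_S$ on $E(S)$. For each $\omega_S$ and each pendant subtree, the weight is $Z^{0}_{T^w}\bigl(t\text{-factor}+\text{connection-factor}\,(f(w)-1)/q\bigr)$, and the overall power of $q$ is computed from the partition of $V(S)$ induced by $\omega_S$ together with which pendant roots reach $\mathcal C_1$. The normalizing $\prod_w Z^0_{T^w}$ cancels, leaving a ratio of polynomials in the $f(w)$ with positive coefficients. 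The denominator is bounded below because every term is positive and at least one term, the all-disconnected one, is bounded below.
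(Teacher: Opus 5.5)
Your proposal is correct and follows essentially the same route as the paper. Both take the minimal subtree containing $\rho$ and the endpoints of $\Lambda$, treat the complete $d$-ary pendant subtrees with Lemma~\ref{lem:uniform-positive} (descendant case) and Lemma~\ref{lem:uniform-contraction} using $\ell=\lceil\sqrt{H}\rceil$, take a union bound over the boundedly many pendant roots, and write the marginal as a rational function of the incoming messages. Your lower bound on the denominator via the all-disconnected term is a clean way to get the uniform Lipschitz constant.

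The only slip is the parenthetical about switching the base point. Since $\Lambda$ and $\rho$ are fixed, $\dist(\rho,\Lambda)$ is a $\Lambda$-dependent constant, so you can always keep $\rho$ in $S$. Otherwise one pendant component contains $\rho$ and is not a complete $d$-ary tree, and the descendant case alone would no longer suffice.
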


\begin{proof}
Let $T_\Lambda$ be the minimal connected subtree of $\dreg$ containing the root $\rho$ and every endpoint of every edge in $\Lambda$. The distribution of the configuration on $E(T_\Lambda)$ depends on the boundary condition outside $T_\Lambda$ only through the finitely many messages entering $T_\Lambda$ from the components of $\dreg\setminus T_\Lambda$. Enumerate these components by $T'_1,\ldots,T'_m$, and let $u_i$ be the root of $T'_i$ adjacent to $T_\Lambda$.

Since $T_\Lambda$ contains $\rho$ and is ancestor-closed, each $T'_i$ is a complete $d$-ary tree. Let $h_i$ denote its height. Note that $h_i\ge r_h-1$ for every $i\in[m]$. For each $i$, set $\ell_i:=\left\lceil\sqrt{h_i}\right\rceil$ and $k_i:=h_i-\ell_i$. 

Applying Lemma~\ref{lem:uniform-positive} to $T'_i$, with $h_i=k_i+\ell_i$, shows that, except on an event of probability at most $\exp(-c d^{\sqrt{h_i}})$, all messages entering the upper subtree of height $k_i$ lie in a fixed interval $[1+\varepsilon,K]$; the corresponding bounds under the all-wired boundary condition hold deterministically. Lemma~\ref{lem:uniform-contraction} then gives
\[
    \left|
        f^\xi(u_i)-f^{\mathbf 1}(u_i)
    \right|
    \le
    C\beta_0^{k_i}
\]
for constants $C<\infty$ and $\beta_0\in(0,1)$ independent of $i$ and $h$. Since $m$ is fixed, a union bound over the components, followed by choosing any $\beta\in(\beta_0,1)$ and adjusting the constants, shows that, except on an event of probability at most $A_\Lambda\exp\left(-c d^{\sqrt{r_h}}\right)$, the incoming messages under $\xi$ and under $\mathbf 1$ satisfy
\[
    \max_{1\le i\le m}
    \left|
        f^\xi(u_i)-f^{\mathbf 1}(u_i)
    \right|
    \le
    C_\Lambda\beta^{r_h}.
\]

On the same event, all relevant messages lie in a fixed compact interval contained in $(1,\infty)$. We now make explicit why the individual scalar messages determine the joint law on $T_\Lambda$, even though the incoming branches share the same wired component. For branch $i$, let $Z_i^0$ and $Z_i^1$ be its restricted partition functions according to whether its attachment vertex is disconnected from or connected to the common wired component, and write
\[
    x_i
    :=
    1+q\frac{Z_i^1}{Z_i^0},
    \qquad
    y_i
    :=
    \frac{Z_i^1}{Z_i^0}
    =
    \frac{x_i-1}{q}.
\]

For a fixed configuration $\omega$ on $E(T_\Lambda)$, summing over all configurations in the incoming branches gives an unnormalized weight of the form
\[
    \widetilde W(\omega;\mathbf y)
    =
    \left(\prod_{i=1}^m Z_i^0\right)
    \sum_{S\subseteq[m]}
    c(\omega,S)
    \prod_{i\in S}y_i.
\]
Indeed, conditional on the set $S$ of branches whose attachment vertices connect to the common wired component, the sums over the branch configurations factor as
$\prod_{i\notin S}Z_i^0\prod_{i\in S}Z_i^1$.

On the good event, $x_i>1$ for every $i$, so each incoming branch has a nonempty wired component. This component is present and counted once in both restricted sums $Z_i^0$ and $Z_i^1$. The product of the restricted partition functions therefore counts the common wired component once in each isolated branch. 

Gluing the branches replaces these $m$ copies by one and contributes the constant factor $q^{1-m}$, independent of $\omega$ and $S$. This factor is absorbed into the overall normalization. The remaining gluing correction depends only on $\omega$ and $S$, and together with the edge weight of $\omega$ defines $c(\omega,S)\ge0$. Thus the dependence between different incoming branches is contained in the finite coefficients $c(\omega,S)$, whereas each individual branch enters only through its scalar ratio $y_i$. Denote the resulting law on $E(T_\Lambda)$ by $\mu_{T_\Lambda}^{\mathbf y}$. The common factor $\prod_i Z_i^0$ cancels after normalization, so
\[
    \mu_{T_\Lambda}^{\mathbf y}(\omega)
    =
    \frac{
        \displaystyle
        \sum_{S\subseteq[m]}
        c(\omega,S)
        \prod_{i\in S}y_i
    }{
        \displaystyle
        \sum_{\omega'}
        \sum_{S\subseteq[m]}
        c(\omega',S)
        \prod_{i\in S}y_i
    }.
\]

Since $T_\Lambda$ is fixed, this is a rational function of the finitely many incoming messages $x_1,\ldots,x_m$. Its denominator is continuous and strictly positive on the fixed compact message domain, and is therefore uniformly bounded away from zero. Hence these rational functions are uniformly Lipschitz there. It follows that
\[
    \left\|
        \mu_{\dreg}^{\xi}\vert_\Lambda
        -
        \mu_{\dreg}^{\mathbf 1}\vert_\Lambda
    \right\|_{\mathrm{TV}}
    \le
    C_\Lambda\beta^{r_h}
\]
on the good event. The first claim follows after adjusting the constants. The expectation bound follows because total variation distance is at most $1$.
\end{proof}

\section{Uniqueness of the infinite-volume wired tree measure}
\label{sec:uniqueness}

In this section we prove the new supercritical uniqueness statement and give a self-contained account of the full phase diagram. We use the DLR formalism from Definition~\ref{def:dlr}, under which all infinite connected components of the exterior configuration are identified with one another at infinity. For $p>\pc$, uniqueness follows from weak spatial mixing. For $p\le\pc$, the classical wired measure is an explicit Bernoulli product measure; we include a direct proof for completeness.

\subsection{The supercritical regime \texorpdfstring{$p>\pc$}{p > pc}}

We use the weak spatial mixing estimate from Theorem~\ref{thm:wsm} to prove uniqueness in the supercritical regime. The existence of the all-wired weak limit is known in greater generality from~\cite[Theorem~10.74]{grimmett2006randomcluster}. We nevertheless include a short proof for balls in the present message coordinates, since it identifies the limiting boundary law used below. For each $h\ge1$, let $\dreg$ denote the finite $\Delta$-regular tree of height $h$, and let $\mu_{\dreg}^{\mathbf 1}$ be the corresponding random-cluster measure with the all-wired boundary condition.

\begin{lemma}[Existence of the wired finite-volume limit]
\label{lem:wired-limit}
Let $0<q<1, \Delta\ge3$ and $p>\pc$. The sequence of the random-cluster measures with the all-wired boundary condition $\bigl(\mu_{\dreg}^{\mathbf 1}\bigr)_{h\ge1}$ converges locally as $h\to\infty$. We denote its limit by
\[
    \mu^*
    :=
    \lim_{h\to\infty}
    \mu_{\dreg}^{\mathbf 1}.
\]
\end{lemma}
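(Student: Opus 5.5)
Fix a finite edge set $\Lambda\subset E(\mathcal T_\infty)$. It suffices to show that $\bigl(\mu_{\dreg}^{\mathbf 1}\vert_\Lambda\bigr)_h$ is Cauchy in total variation. The limits are then consistent across $\Lambda$, and Kolmogorov extension produces $\mu^*$.

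Take $h$ large enough that $\Lambda\subset E(\widehat{\mathcal T}_{h_0})$ for some fixed $h_0$. Let $T_\Lambda$ be the minimal connected subtree containing $\rho$ and $\Lambda$, exactly as in the proof of Corollary~\ref{cor:local-wsm}. The components $T'_1,\dots,T'_m$ of $\dreg\setminus T_\Lambda$ are complete $d$-ary trees of heights $h-c_i$, where the offsets $c_i$ depend only on $\Lambda$. Under the all-wired boundary condition, every leaf of each $T'_i$ lies in the common wired class. The gluing computation from that proof therefore expresses $\mu_{\dreg}^{\mathbf 1}\vert_{E(T_\Lambda)}$ as the fixed rational function $\mu_{T_\Lambda}^{\mathbf y}$ of the incoming messages $x_i=f^{\mathbf 1}(u_i)$. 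This function has coefficients $c(\omega,S)$ that do not depend on $h$.

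Under the all-wired condition, each incoming message is a homogeneous iterate: $f^{\mathbf 1}(u_i)=\phi^{(h-c_i)}(\infty)$. Indeed, all vertices at the same depth of a complete $d$-ary tree with wired leaves receive the same message, so Lemma~\ref{lem:tree-message-recursion} reduces to the scalar map $\phi$. Proposition~\ref{prop:attractivity} gives $\phi^{(n)}(\infty)\to x^*$ as $n\to\infty$. Its proof also gives a rate: all iterates from $\phi(\infty)=(1-p)^{-d}$ onward lie in $[x^*,(1-p)^{-d}]\subset(1,\infty)$, and near $x^*$ the iteration contracts at rate below $1$. Alternatively, Lemma~\ref{lem:uniform-contraction} applied to the pair of boundary values $\phi(\infty)$ and $x^*$ gives $|\phi^{(n)}(\infty)-x^*|\le C_0\beta^n$.

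Hence every $x_i$ converges to $x^*$ as $h\to\infty$. The rational function $\mu_{T_\Lambda}^{\mathbf y}$ is continuous, and its denominator is bounded below on the compact domain $[x^*,(1-p)^{-d}]^m$. So $\mu_{\dreg}^{\mathbf 1}\vert_{E(T_\Lambda)}$ converges to the law $\mu_{T_\Lambda}^{\mathbf y^*}$ with all $y_i=(x^*-1)/q$. Projecting from $E(T_\Lambda)$ onto $\Lambda$ gives local convergence, with an exponential rate.

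The main point needing care is that the representation $\mu_{T_\Lambda}^{\mathbf y}$ is genuinely independent of $h$. This holds because:
\begin{itemize}
\item the dependence among branches enters only through the common wired class and the coefficients $c(\omega,S)$, which depend only on $T_\Lambda$;
\item the branch contributions are captured entirely by the scalar ratios $y_i$, and the gluing constant $q^{1-m}$ cancels after normalization.
\end{itemize}
Once this is granted, the limits for nested $\Lambda$ are automatically consistent, since each is a marginal of the limit for a larger $T_\Lambda$.
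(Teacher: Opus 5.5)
Your proposal is correct and follows essentially the same route as the paper. Both arguments use three ingredients: the homogeneous all-wired messages $\phi^{(n)}(\infty)\to x^*$ by Proposition~\ref{prop:attractivity}; an $h$-independent rational representation of the local marginal in the incoming ratios $y=(x-1)/q$, with coefficients $c(\omega,S)$ and a strictly positive limiting denominator; and Kolmogorov extension. The differences are cosmetic: the paper works with a ball $B_r(\rho)\supseteq\Lambda$, so that every incoming message is the same iterate $x_{h-r}$, rather than the minimal subtree $T_\Lambda$, and it does not need the exponential rate you extract.
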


\begin{proof}
Fix a finite edge set $\Lambda\subset E(\mathcal T_\infty)$, and choose $r\ge1$ such that $\Lambda\subset E(B_r(\rho))$. Write $E_r:=E(B_r(\rho))$. Under the all-wired boundary condition on a tree of height $h>r$, each component of $\dreg\setminus B_r(\rho)$ sends the homogeneous message $x_{h-r}=\phi^{(h-r)}(\infty)$ toward $B_r(\rho)$. By Proposition~\ref{prop:attractivity}, $x_{h-r}\to x^*$ as $h\to\infty$.

We make explicit how these incoming messages determine the marginal on $B_r(\rho)$. Let $m$ be the number of components of $\dreg\setminus B_r(\rho)$ attached to $B_r(\rho)$, and denote by $Z^0$ and $Z^1$ the restricted partition functions of any one such component according to whether its attachment vertex is disconnected from or connected to the wired boundary component. By the definition of the message in Definition~\ref{def:message}, $x_{h-r}=1+q Z^1/Z^0$.
Let 
\[
    y_{h-r}=\frac{(x_{h-r}-1)}{q}=\frac{Z^1}{Z^0} > 0.
\] 
Fix a configuration $\omega\in\{0,1\}^{E_r}$. After summing over all configurations outside $B_r(\rho)$, its unnormalized marginal weight can be written as
\[
    \widetilde W_h(\omega)
    =
    (Z^0)^m
    \sum_{S\subseteq[m]}
    c(\omega,S)y_{h-r}^{|S|}.
\]
Here $S$ records the exterior components whose attachment vertices are connected to the wired boundary component. Conditional on $S$, the exterior sums factor as $(Z^0)^{m-|S|}(Z^1)^{|S|}$.

Under the all-wired boundary condition, every incoming branch contains a nonempty copy of the common wired component. This component is counted once in both $Z^0$ and $Z^1$, whether or not the attachment vertex connects to it.

Identifying the $m$ copies of the common wired component therefore contributes the constant factor $q^{1-m}$, independent of $S$. Every remaining cluster-count correction depends only on $B_r(\rho)$, $\omega$, and $S$. Absorbing these corrections and the edge weight of $\omega$ into $c(\omega,S)\ge0$ shows in particular that these coefficients do not depend on $h$. The common factor $(Z^0)^m$ cancels upon normalization. Hence, for every $\eta\in\{0,1\}^{\Lambda}$,
\[
    \mu_{\dreg}^{\mathbf 1}
    (\omega_\Lambda=\eta)
    =
    \frac{
        \displaystyle
        \sum_{\substack{
            \omega\in\{0,1\}^{E_r}:\\
            \omega_\Lambda=\eta
        }}
        \sum_{S\subseteq[m]}
        c(\omega,S)y_{h-r}^{|S|}
    }{
        \displaystyle
        \sum_{\omega\in\{0,1\}^{E_r}}
        \sum_{S\subseteq[m]}
        c(\omega,S)y_{h-r}^{|S|}
    }.
\]
Both sums are finite. Since $x_{h-r}\to x^*>1$, $y_{h-r}$ also converges to $y^*$ for some $y^*>0$. Thus every term in the numerator and denominator converges. The limiting denominator is strictly positive, and therefore $\mu_{\dreg}^{\mathbf 1}(\omega_\Lambda=\eta)$ converges for every $\eta$. Since $\{0,1\}^{\Lambda}$ is finite, this proves convergence of $\mu_{\dreg}^{\mathbf 1}\vert_\Lambda$ in total variation.

The limiting marginal is obtained from the same finite sum by replacing $x_{h-r}$ with $x^*$. For each $r$, let $\mu_r^*$ be the limiting law on $B_r(\rho)$. If $s>r$, then for every $h>s$,
\[
    \left(
        \mu_{\dreg}^{\mathbf 1}\vert_{E_s}
    \right)\vert_{E_r}
    =
    \mu_{\dreg}^{\mathbf 1}\vert_{E_r}.
\]
Passing to the limit as $h\to\infty$ on these finite configuration spaces gives $\mu_s^*\vert_{E_r}=\mu_r^*$. Thus the limiting marginals are projectively consistent, and the Kolmogorov extension theorem gives a probability measure $\mu^*$ on $\{0,1\}^{E(\mathcal T_\infty)}$ with these marginals.
\end{proof}

\begin{lemma}[DLR property of the wired limit]
\label{lem:wired-limit-dlr}
The measure $\mu^*$ from Lemma~\ref{lem:wired-limit} satisfies the wired DLR specification of Definition~\ref{def:dlr}.
\end{lemma}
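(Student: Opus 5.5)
To verify Definition~\ref{def:dlr} for $\mu^*$, I will fix a finite connected subtree $T$ and a bounded $F$ depending on $E_T$, and show that for every bounded local $G$ depending on edges in $E(\mathcal T_\infty)\setminus E_T$,
\[
    \mathbb E_{\mu^*}[FG]=\mathbb E_{\mu^*}\bigl[\gamma_T(F\mid\cdot)\,G\bigr].
\]
A monotone class argument then identifies $\gamma_T(F\mid\omega_{T^c})$ as the conditional expectation, since local $G$ generate $\mathcal F_{T^c}$ and $\gamma_T(F\mid\cdot)$ is $\mathcal F_{T^c}$-measurable by Lemma~\ref{lem:wired-kernel-consistency}. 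I will first reduce to $T\subseteq B_r(\rho)$ and $G$ supported on $E_R\setminus E_T$ for some $R>r$. For finite $h>R$, the domain Markov property of $\mu^{\mathbf 1}_{\dreg}$ gives $\mathbb E_{\mu^{\mathbf 1}_{\dreg}}[FG]=\mathbb E_{\mu^{\mathbf 1}_{\dreg}}[\mu_T^{\xi^h_T}(F)\,G]$. Here $\xi^h_T$ is the partition of $\partial_{\mathrm E}T$ induced by exterior connections inside $\dreg$, with all vertices connected to $\partial\dreg$ wired together.

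The difficulty is that $\xi^h_T$ is not a local function, because it depends on connections to the boundary. I would not pass to the limit in $\xi^h_T$ directly. Instead I will exploit the message representation from the proof of Lemma~\ref{lem:wired-limit}. I will compute $\mathbb E_{\mu^*}[FG]$ and $\mathbb E_{\mu^*}[\gamma_T(F\mid\cdot)G]$ through an explicit description of $\mu^*$ on $B_R(\rho)$ together with the law of which attachment vertices of $\partial B_R$ connect to infinity. The components of $\mathcal T_\infty\setminus B_R(\rho)$ are $d$-ary branches. Under $\mu^*$, I will show by the same finite-sum argument that the joint law of $(\omega_{E_R},S)$ has the form
\[
    \frac{c(\omega,S)(y^*)^{|S|}}{\mathcal Z^*}.
\]
Here $S$ is the set of attachment vertices joined to infinity in the exterior, and $y^*=(x^*-1)/q$.

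This requires two substeps. (a) Under $\mu^{\mathbf 1}_{\dreg}$, the event that an attachment vertex connects to $\partial\dreg$ converges jointly with $\omega_{E_R}$; the finite-$h$ formula holds with $y_{h-R}$. (b) In the limit, "connected to $\partial\dreg$" becomes "connected to infinity" $\mu^*$-a.s. I will prove (b) by approximating with the event of connection to distance $n$. The connection to depth $n$ within a branch, conditioned on connection to depth $n$ but not to $\partial\dreg$, must have vanishing probability uniformly in $h$. This follows because the message recursion gives $\mu(\text{depth } n\text{ but not boundary})\to0$: the messages from depth $n$ converge to the attractive fixed point, and the Fact~\ref{fact:message-probability} conversion handles the rest.

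Given this joint description, $\gamma_T(F\mid\cdot)$ is a function of $\omega_{E_R\setminus E_T}$ and $S$, and the identity reduces to the finite-volume identity on $B_R(\rho)$ with boundary weights $(y^*)^{|S|}$. That finite identity is just the domain Markov property for the finite weighted model $c(\omega,S)(y^*)^{|S|}$, since conditioning on $S$ and the annulus configuration yields $\mu_T^{\xi_T}$.

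The main obstacle is substep (b), the identification of connection to $\partial\dreg$ with connection to infinity in the limit. This is exactly where quasilocality fails. I would first try to control it by the uniform-in-$h$ message convergence, and fall back on citing \cite[Theorem~10.74 and (10.66)]{grimmett2006randomcluster} if the uniformity is delicate.
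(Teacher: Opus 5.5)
The paper does not prove this lemma directly. It cites Grimmett's identification of the all-wired limit as a wired DLR state: Theorem~10.82(c) with the specification (10.66). (Theorem~10.74, which you name, is only the existence result.) So your fallback matches the paper, and your main route would be a genuinely self-contained alternative.

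Most of that route is sound:
\begin{itemize}
\item the monotone-class reduction;
\item the observation that on a tree $\gamma_T(F\mid\cdot)$ depends only on $\omega_{E_R\setminus E_T}$ and on the set $S$ of $v\in\partial B_R(\rho)$ for which $\mathcal B_v$ occurs;
\item the domain Markov identity for the finite weights $c(\omega,S)(y^*)^{|S|}$.
\end{itemize}

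The gap is substep (b), and the mechanism you propose for it does not work. Write $D_n(v)$ for the event that $v$ is joined to depth $n$ inside its branch. You need $\sup_h \mu^{\mathbf 1}_{\dreg}\bigl(D_n(v)\setminus\{v\leftrightarrow\partial\dreg\text{ inside the branch}\}\bigr)\to0$ as $n\to\infty$. Message convergence and Fact~\ref{fact:message-probability} only control probabilities of connection to the wired class. The quantity above is not a message, and its decay does not follow from the attractivity of $x^*$. A union bound over depth-$n$ vertices also fails, because in the supercritical regime exponentially many of them are reached.

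The missing idea is that the cluster conditioned to avoid the wired class is subcritical. Condition on the configuration outside the branch and on $v\nleftrightarrow\partial\dreg$ inside it; this works whether or not $v$ is wired from outside. The computation behind Lemma~\ref{lem:tree-message-recursion} then shows that the edges from $v$ to its children $w$ are independent. Each is open with probability $\pi(f(w))$, where $\pi(x):=p/\bigl(p+(1-p)(x+q-1)\bigr)$, and the same structure recurs along the cluster. Since all-wired messages satisfy $f\ge x^*$ and $\pi$ is decreasing, this gives the bound $(d\pi(x^*))^n$ uniformly in $h$, provided $d\pi(x^*)<1$.

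That inequality cannot be read off from $\phi'(x^*)<1$. One computes $\phi'(x^*)=d\pi(x^*)\,x^*(1-c)/(x^*-c)$ with $c=(1-q)(1-p)$, and this is strictly smaller than $d\pi(x^*)$ because $x^*>1$. The inequality is nevertheless true:
\begin{itemize}
\item Set $x_0:=1-q+(d-1)p/(1-p)$. Then $d\pi(x_0)=1$, and $x_0>1$ exactly when $p>\pc$.
\item The inequality $\Phi(x_0)^d>x_0$ is AM--GM applied to $d-1$ copies of $1-p$ and one copy of $(1-q)(1-p)+(d-1)p$. It is strict unless $p=\pc$.
\item Hence $x_0<x^*$ by \eqref{eq:phi-crossing-sign}, so $d\pi(x^*)<d\pi(x_0)=1$.
\end{itemize}
With this lemma inserted, (b) holds and your direct proof goes through. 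Without it, the proposal as written establishes the statement only through the citation.
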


\begin{proof}
This is the all-wired-limit identification in~\cite[Theorem~10.82(c)]{grimmett2006randomcluster}, with the specification given in~\cite[Equation~(10.66)]{grimmett2006randomcluster}. The argument is presented for the binary tree, but it uses only the finite decomposition into descendant branches and applies unchanged with any fixed number $d$ of such branches. Lemma~\ref{lem:wired-limit} identifies $\mu^*$ as the local limit of the same all-wired finite-volume measures, so the cited identification applies.
\end{proof}

The positivity of the supercritical percolation probability and the location of the tree threshold are consequences of~\cite[Proposition~10.81 and Theorem~10.82(a),(b)]{grimmett2006randomcluster}. The next proposition records a convenient explicit lower bound in terms of the fixed point $x^*$.

\begin{proposition}[Percolation of the supercritical wired limit]
\label{prop:wired-limit-percolates}
Let $0<q<1$, $\Delta\ge3$, and $p>\pc$. Let $x^*>1$ be the nontrivial fixed point of the homogeneous recursion, and define $\widehat x^*:=\Phi(x^*)^\Delta$. Then the all-wired finite-volume limit from Lemma~\ref{lem:wired-limit} satisfies
\[
    \mu^*(\rho\leftrightarrow\infty)
    \ge
    \frac{\widehat x^*-1}{\widehat x^*+q-1}
    >0.
\]
\end{proposition}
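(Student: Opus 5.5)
The plan is to compare the event $\{\rho\leftrightarrow\infty\}$ with the finite-volume events $\{\rho\leftrightarrow\partial B_r(\rho)\}$ and then use the message formula at the root of $\dreg$.

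\emph{Step 1: reduction to connections to distance $r$.} The events $\{\rho\leftrightarrow\partial B_r(\rho)\}$ are cylinder events, and they decrease in $r$ to $\{\rho\leftrightarrow\infty\}$ (on a locally finite tree, König's lemma applies). Hence
\[
    \mu^*(\rho\leftrightarrow\infty)=\lim_{r\to\infty}\mu^*\bigl(\rho\leftrightarrow\partial B_r(\rho)\bigr).
\]
By Lemma~\ref{lem:wired-limit}, each term equals $\lim_{h\to\infty}\mu_{\dreg}^{\mathbf 1}(\rho\leftrightarrow\partial B_r(\rho))$.

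\emph{Step 2: monotone comparison in finite volume.} For $h>r$, any open path from $\rho$ to the wired leaf set $\partial\dreg$ must cross $\partial B_r(\rho)$. So the event $\{\rho\sim\partial\dreg\}$, meaning connection to the wired class by an open path, is contained in $\{\rho\leftrightarrow\partial B_r(\rho)\}$. One technicality: "connected to the wired class" in the message definition means a genuine open path to some leaf, and the boundary identification does not create shortcuts through $\rho$, since $\rho$ is not a leaf. Therefore
\[
    \mu_{\dreg}^{\mathbf 1}\bigl(\rho\leftrightarrow\partial B_r(\rho)\bigr)\ge\mu_{\dreg}^{\mathbf 1}(\rho\sim\partial\dreg).
\]

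\emph{Step 3: evaluation via messages.} By Fact~\ref{fact:message-probability}, the right-hand side equals $(f_h-1)/(f_h+q-1)$, where $f_h:=f^{\mathbf 1}(\rho)$ on $\dreg$. By Lemma~\ref{lem:tree-message-recursion}, the root has $\Delta$ children, each of which sends the homogeneous message $\phi^{(h-1)}(\infty)$, so
\[
    f_h=\Phi\bigl(\phi^{(h-1)}(\infty)\bigr)^{\Delta}.
\]
Proposition~\ref{prop:attractivity} gives $\phi^{(h-1)}(\infty)\to x^*$. By continuity of $\Phi$, $f_h\to\Phi(x^*)^\Delta=\widehat x^*$. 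The map $x\mapsto (x-1)/(x+q-1)$ is continuous on $[1,\infty)$, so letting $h\to\infty$ and then $r\to\infty$ yields the stated bound.

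\emph{Step 4: strict positivity.} We have $x^*>1$, and $\Phi$ is strictly increasing with $\Phi(1)=1$, so $\Phi(x^*)>1$ and hence $\widehat x^*>1$. Also $\widehat x^*+q-1>q>0$, so the lower bound is strictly positive.

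The only delicate point is Step 2, namely correctly identifying the event $u\sim\mathcal C_1$ with an open-path connection to a wired leaf, so that it is contained in the cylinder event. Everything else is continuity and the attractivity already proved.
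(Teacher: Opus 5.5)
Your proposal is correct and follows essentially the same route as the paper: the paper also computes the root message $\Phi\bigl(\phi^{(h-1)}(\infty)\bigr)^\Delta$ on $\widehat{\mathcal T}_h$ and applies Fact~\ref{fact:message-probability}. It then bounds the cylinder event of reaching $\partial\widehat{\mathcal T}_H$ from below by the finite-volume wired connection probability, passes to the limit using Lemma~\ref{lem:wired-limit}, and concludes by continuity from above.
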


\begin{proof}
Let $\widehat{\mathcal T}_h$ be the ball of radius $h$ centered at $\rho$, with all vertices at distance $h$ wired together. Each of the $\Delta$ branches incident to the root is a complete $d$-ary tree of height $h-1$, where $d=\Delta-1$. 

If $x_{h-1}:=\phi^{(h-1)}(\infty)$ is the message sent from each such branch, then the recursion at the root gives $\widehat x_h:=\Phi(x_{h-1})^\Delta$. 
By Fact~\ref{fact:message-probability},
\[
    \mu_{\widehat{\mathcal T}_h}^{\mathbf 1}
    \bigl(\rho\leftrightarrow\partial\widehat{\mathcal T}_h\bigr)
    =
    \frac{\widehat x_h-1}{\widehat x_h+q-1}.
\]
Proposition~\ref{prop:attractivity} gives $x_{h-1}\to x^*$ as $h\to\infty$, and hence $\widehat x_h\longrightarrow\widehat x^*=\Phi(x^*)^\Delta$ as $h\to\infty$. Since $x^*>1$, we have $\Phi(x^*)>1$ and therefore $\widehat x^*>1$. It follows that
\begin{equation}
\label{eq:finite-wired-root-connection-limit}
    \lim_{h\to\infty}
    \mu_{\widehat{\mathcal T}_h}^{\mathbf 1}
    \bigl(\rho\leftrightarrow\partial\widehat{\mathcal T}_h\bigr)
    =
    \frac{\widehat x^*-1}{\widehat x^*+q-1}
    >0.
\end{equation}

For a fixed $H\ge1$, regard $\widehat{\mathcal T}_H$ as the subball of $\dreg$ of radius $H$ centered at $\rho$ and let
\[
    A_H
    :=
    \left\{
        \rho\text{ is connected to }\partial\widehat{\mathcal T}_H
        \text{ by an open path contained in }\widehat{\mathcal T}_H
    \right\}.
\]
For every $h\ge H$, a connection from $\rho$ to the wired boundary of $\widehat{\mathcal T}_h$ must cross $\partial\widehat{\mathcal T}_H$, therefore we have
$
    \mu_{\widehat{\mathcal T}_h}^{\mathbf 1}(A_H)
    \ge
    \mu_{\widehat{\mathcal T}_h}^{\mathbf 1}
    \bigl(\rho\leftrightarrow\partial\widehat{\mathcal T}_h\bigr).
$
The event $A_H$ depends on finitely many edges. Local convergence from Lemma~\ref{lem:wired-limit} and~\eqref{eq:finite-wired-root-connection-limit} therefore imply
\[
    \mu^*(A_H)
    \ge
    \frac{\widehat x^*-1}{\widehat x^*+q-1}
    \qquad
    \text{for every }H\ge1.
\]
The events $(A_H)_{H\ge1}$ decrease to $\{\rho\leftrightarrow\infty\}$. Continuity from above now gives
\[
    \mu^*(\rho\leftrightarrow\infty)
    =
    \lim_{H\to\infty}\mu^*(A_H)
    \ge
    \frac{\widehat x^*-1}{\widehat x^*+q-1}
    >0.
\]
\end{proof}

To compare the all-wired limit with an arbitrary candidate wired DLR measure $\nu$ on $\mathcal T_\infty$, consider the boundary condition on $\partial\dreg$ induced by the exterior configuration, with all infinite exterior components wired together. To apply Theorem~\ref{thm:wsm}, we must show that this boundary condition contains wired vertices with a density bounded uniformly in both the radius and the measure. The following bridge lemma provides exactly this input; its proof is given in Section~\ref{subsec:exterior-induced-boundary}.

\begin{restatable}[Exterior-induced boundary conditions]{lemma}{inducedThetaWired}
\label{lem:induced-theta-wired}
Let $0<q<1$, $\Delta\ge3$, and $p>\pc$. There exists $\theta\in(0,1]$, depending only on $p,q,$ and $d$, such that, for every infinite-volume measure $\nu$ satisfying the wired-tree DLR condition and every $h\ge1$, the distribution of $\xi_h(\sigma)$ under $\sigma\sim\nu$ is $\theta$-wired.
\end{restatable}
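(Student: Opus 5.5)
The plan is to reduce the $\theta$-wired property to a uniform lower bound on branch survival, conditional on everything outside the branch. For a boundary vertex $v\in\partial\dreg$, let $\mathcal T^v$ be the infinite complete $d$-ary subtree of $\mathcal T_\infty$ hanging below $v$, outside the ball. The branches $\mathcal T^v$ for distinct $v$ are edge-disjoint. Moreover $v\in\mathcal C_1(\xi_h(\sigma))$ as soon as $v$ is joined to infinity by an open path inside $\mathcal T^v$. Write $S_v$ for this event. It suffices to find $\theta>0$, depending only on $p,q,d$, such that
\[
    \nu\bigl(S_v\,\big|\,\mathcal F_{(\mathcal T^v)^c}\bigr)\ge\theta\qquad\nu\text{-a.s.}
\]
for every $v$. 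Indeed, given this, reveal the branches one at a time. The conditional probability of each new $S_v$, given the previously revealed branches, is an average of the displayed conditional probability and hence is at least $\theta$. The standard sequential coupling then gives $\{v:S_v\}\supseteq A$ for a Bernoulli($\theta$) subset $A$, so the law of $\mathcal C_1(\xi_h)$ dominates the product measure, as Definition~\ref{def:theta-wired} requires.

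To prove the displayed bound, fix $n$ and let $T^v_n$ be the first $n$ levels of $\mathcal T^v$, together with the edge from $v$'s parent if convenient. Let $S_{v,n}$ be the event that $v$ reaches depth $n$ inside $T^v_n$. Since $S_{v,n}\downarrow S_v$, continuity from above reduces the claim to the bound
\[
    \gamma_{T^v_n}(S_{v,n}\mid\eta)\ge\theta
\]
for every exterior configuration $\eta$, uniformly in $n$. The DLR condition of Definition~\ref{def:dlr} identifies the conditional law inside $T^v_n$ with $\mu^{\xi}_{T^v_n}$, where $\xi=\xi_{T^v_n}(\eta)$ is arbitrary. So we need a bound uniform over all boundary partitions. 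Here Lemma~\ref{lem:density-amplification} is not available, since we have no density assumption.

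The main step, which I expect to be the obstacle, is a coupling of $\mu^\xi_{T^v_n}$ with Bernoulli($\widehat p$) percolation on $T^v_n$ under which $S_{v,n}$ contains the event that the Bernoulli cluster of $v$ reaches depth $n$. I would explore the cluster of $v$ downward, one edge $e=(x,y)$ at a time. The one-edge rule of Section~\ref{subsec:critical-point} says that, conditional on all other edges, $e$ is open with probability $\widehat p$ unless $x$ and $y$ are already connected through the rest of the configuration and $\xi$. In that exceptional case the probability is $p$.

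Now suppose $x$ is already joined to $v$ through explored edges. Since $T^v_n$ is a tree, $x$ and $y$ can be connected without $e$ only through boundary identifications. These force $x$ to reach a wired leaf of depth $n$ through some other route, which lies inside $T^v_n$ because $v$ is the only vertex of $T^v_n$ touching the rest of the graph and $x$ is a descendant of $v$. That route could, however, enter via $v$'s side, so I would handle it by treating the exterior class of $v$ carefully.

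To turn the pointwise-conditional statement into a statement about sequential exploration, I would argue at the level of events. On $S_{v,n}^c$, and for edges at the exploration frontier, the conditional open probability is exactly $\widehat p$ given any values of the unexplored edges consistent with $S_{v,n}^c$. An alternative is to stop the exploration at the first time $S_{v,n}$ occurs, so that the domination $\ge\widehat p$ is only needed before that time. I would first try the stopping-time formulation, since it avoids conditioning on an event that is not measurable in advance.

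Granting the coupling, we obtain
\[
    \gamma_{T^v_n}(S_{v,n}\mid\eta)\ge\mathbf P_{\widehat p}(\text{the Galton--Watson tree with }\mathrm{Bin}(d,\widehat p)\text{ offspring survives }n\text{ generations}).
\]
The right-hand side is at least the survival probability $\theta>0$ of this Galton--Watson tree, which is positive because $d\widehat p>1$ for $p>\pc$. This $\theta$ depends only on $p,q,d$ and is independent of $h$, $n$, $v$ and $\nu$, which completes the plan.
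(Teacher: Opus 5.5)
Your reduction matches the paper's. The bound $\nu(S_v\mid\mathcal F_{(\mathcal T^v)^c})\ge\theta$ is Lemma~\ref{lem:uniform-branch-survival}, where your $S_v$ is $\mathcal B_v$, and the sequential coupling over edge-disjoint exterior branches is how the paper finishes. The gap is the step you flagged, and it cannot be repaired as stated: $\mu^\xi_{T^v_n}$ does not dominate $\operatorname{Ber}(\widehat p)$ percolation on $S_{v,n}$. This is not only a problem with routes through the exterior class of $v$.

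Take $n=1$, and let $\eta$ make every child of $v$ connected to infinity through its own branch.
\begin{itemize}
\item If $v$ is also connected to infinity outside $\mathcal T^v$, every star edge joins two vertices of one boundary class. The star edges are then i.i.d.\ $\operatorname{Ber}(p)$, so $\gamma_{T^v_1}(S_{v,1}\mid\eta)=1-(1-p)^d<1-(1-\widehat p)^d$.
\item If $v$ is not, counting clusters gives odds $\bigl((1+r)^d-1\bigr)/q$ for $S_{v,1}$, with $r=p/(1-p)$. Under $\operatorname{Ber}(\widehat p)$ the odds are $(1+r/q)^d-1$. The former is strictly smaller for $d\ge2$ and $q<1$.
\end{itemize}
Even your final bound $\gamma_{T^v_n}(S_{v,n}\mid\eta)\ge\theta_{\mathrm{GW}}$ fails. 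Let $v$ be free and every depth-$n$ vertex wired. The conditional law is then $\mu^{\mathbf 1}$ on the $d$-ary tree, and by Fact~\ref{fact:message-probability} the probability of $S_{v,n}$ tends to $(x^*-1)/(x^*+q-1)$. For $d=2$, $q=1/2$, $p=0.35$ this is about $0.095$, while the survival probability of the $\operatorname{Bin}(2,\widehat p)$ Galton--Watson tree is about $0.138$.

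The cause is the one-edge rule itself. The conditional probability drops to $p$ exactly on edges that would close a cycle through the wired class. These are the edges that would realize $S_{v,n}$ when the child's subtree already reaches a wired leaf. Conditioning only on the explored history averages over such configurations, so stopping at the first occurrence of $S_{v,n}$ does not help. Similarly, on $S_{v,n}^c$ the frontier edges into subtrees that reach depth $n$ are forced closed, not $\operatorname{Ber}(\widehat p)$. The only uniform domination available is by $\operatorname{Ber}(p)$. Since $\pc<1/d$ when $q<1$, that leaves $p\in(\pc,1/d]$ uncovered.

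The missing idea is to use the forced-closed edges rather than fight them. The paper works on $\mathcal B_u^c$ and calls a child blue when its own branch reaches infinity. By Lemma~\ref{lem:conditional-star-factorization}, edges to blue children are forced closed, while edges to nonblue children are exactly i.i.d.\ $\operatorname{Ber}(\widehat p)$. The paper adds independent probe marks of density $\varepsilon$, with $d(1-\varepsilon)\widehat p>1$. With conditional probability at least $\delta$, the exploration then stops at an uninspected probe edge from the active cluster to a blue child. On $\mathcal B_u^c$ this edge is necessarily closed. Because the selection never inspects it, its conditional odds of being open are at least $p/(1-p)$, and opening it realizes $\mathcal B_u$. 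This gives $a_u\ge\frac{p}{1-p}\,\delta(1-a_u)$, hence $\theta=p\delta/(1-p+p\delta)$.
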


We now have both ingredients needed to close the supercritical argument. Every wired DLR measure induces a uniformly $\theta$-wired boundary law on distant balls, and the local weak-spatial-mixing estimate erases the effect of replacing that law by the all-wired boundary condition. Hence all of its finite-dimensional marginals must coincide with those of $\mu^*$.

\begin{theorem}[Supercritical wired-DLR uniqueness]
\label{thm:uniqueness}
Let $0<q<1$, $\Delta\ge3$, and $p>\pc$. There exists a unique infinite-volume random-cluster measure satisfying the wired DLR specification of Definition~\ref{def:dlr}, and this measure is the finite-volume limit $\mu^*$ from Lemma~\ref{lem:wired-limit}.
\end{theorem}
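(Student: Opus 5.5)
Existence is already settled: Lemma~\ref{lem:wired-limit} constructs $\mu^*$ and Lemma~\ref{lem:wired-limit-dlr} shows that it satisfies the wired DLR specification. The proof therefore reduces to uniqueness. Let $\nu$ be any measure satisfying Definition~\ref{def:dlr}. We show that $\nu\vert_\Lambda=\mu^*\vert_\Lambda$ for every finite edge set $\Lambda$; since cylinder events determine a measure on $\Omega_\infty$, this gives $\nu=\mu^*$.

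Fix $\Lambda$ and take $h$ large, so that $\Lambda\subset E(\dreg)$ and $r_h=\dist(\Lambda,\partial\dreg)\to\infty$. Apply the DLR condition with $T=\dreg$. This gives
\[
    \nu\vert_\Lambda(\cdot)
    =
    \mathbb E_{\sigma\sim\nu}\Bigl[\mu_{\dreg}^{\xi_h(\sigma)}\vert_\Lambda(\cdot)\Bigr],
\]
where $\xi_h(\sigma)$ is the exterior-induced boundary partition on $\partial_{\mathrm E}\dreg=\partial\dreg$.

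To apply the weak spatial mixing results, $\xi_h(\sigma)$ must be a single-component boundary condition in the sense of Definition~\ref{def:single-component-bc}, which in general it is not. Take the wired class $\mathcal C_1$ to be the set of leaves connected to infinity in the exterior. The exterior configuration consists of disjoint subtrees hanging off distinct leaves, so two distinct leaves cannot be joined by an exterior path that avoids $E(\dreg)$. Consequently every class other than $\mathcal C_1$ is a singleton, and $\xi_h(\sigma)$ is indeed single-component. This observation should be checked carefully. If it were to fail, the plan would be to note that the message recursion only requires tracking the wired class.

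By Lemma~\ref{lem:induced-theta-wired}, the law $\mathsf M_h$ of $\xi_h(\sigma)$ is $\theta$-wired, with $\theta$ independent of $h$ and of $\nu$. Corollary~\ref{cor:local-wsm} then gives
\[
    \bigl\|\nu\vert_\Lambda-\mu_{\dreg}^{\mathbf 1}\vert_\Lambda\bigr\|_{\mathrm{TV}}
    \le
    \mathbb E_{\xi\sim\mathsf M_h}\Bigl[\bigl\|\mu_{\dreg}^{\xi}\vert_\Lambda-\mu_{\dreg}^{\mathbf 1}\vert_\Lambda\bigr\|_{\mathrm{TV}}\Bigr]
    \le
    C_\Lambda\beta^{r_h}+A_\Lambda e^{-c d^{\sqrt{r_h}}},
\]
where the first inequality is convexity of total variation. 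The constants in the corollary depend only on $p,q,d,\theta,\Lambda$, and hence are uniform in $h$.

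Let $h\to\infty$. The right-hand side tends to $0$. By Lemma~\ref{lem:wired-limit}, $\mu_{\dreg}^{\mathbf 1}\vert_\Lambda\to\mu^*\vert_\Lambda$ in total variation. Hence $\nu\vert_\Lambda=\mu^*\vert_\Lambda$.

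The main obstacle is not this assembly, which is routine. It is the bridge input, Lemma~\ref{lem:induced-theta-wired}, whose proof is deferred. The argument above relies on it being uniform in $h$ and in $\nu$: because FKG is unavailable for $q<1$, there is no monotone comparison to fall back on if that uniformity fails.
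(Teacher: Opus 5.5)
Your proposal is correct and follows the paper's proof essentially step for step: the DLR decomposition on the ball $\dreg$, the uniform $\theta$-wired bound from Lemma~\ref{lem:induced-theta-wired}, convexity of total variation combined with Corollary~\ref{cor:local-wsm}, and then $h\to\infty$ using Lemma~\ref{lem:wired-limit}. Your check that $\xi_h(\sigma)$ is single-component is right: distinct leaves carry disjoint exterior branches, so they can be joined only through infinity, and the paper records the same observation at the start of Section~\ref{subsec:exterior-induced-boundary}.
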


\begin{proof}
Existence follows from Lemma~\ref{lem:wired-limit-dlr}. It remains to prove uniqueness.

Let $\nu$ be an arbitrary infinite-volume random-cluster measure satisfying the wired-tree DLR condition. We show that $\nu$ and $\mu^*$ have the same finite-dimensional marginals. 
Fix a finite edge set $\Lambda\subset E(\mathcal T_\infty)$. Let $h$ be sufficiently large that $\Lambda\subset E(\dreg)$, where $\dreg$ is the finite $\Delta$-regular tree of height $h$, and write $r_h:=\dist(\Lambda,\partial\dreg)$. Then $r_h\to\infty$ as $h\to\infty$. 

Let $\sigma\sim\nu$, and let $\xi_h(\sigma)$ be the boundary condition induced on $\partial\dreg$ by the exterior configuration $\sigma\vert_{E(\mathcal T_\infty)\setminus E(\dreg)}$, with all infinite exterior components wired together. By the DLR condition,
\[
    \nu\vert_\Lambda
    =
    \mathbb E_{\sigma\sim\nu}
    \left[
        \mu_{\dreg}^{\xi_h(\sigma)}
        \vert_\Lambda
    \right].
\]
By Lemma~\ref{lem:induced-theta-wired}, the distribution of $\xi_h(\sigma)$ is $\theta$-wired for some $\theta\in(0,1]$ independent of $h$. By convexity of total variation and Corollary~\ref{cor:local-wsm},
\[
    \left\|
        \nu\vert_\Lambda
        -
        \mu_{\dreg}^{\mathbf 1}\vert_\Lambda
    \right\|_{\mathrm{TV}}
    \le
    \mathbb E_{\sigma\sim\nu}
    \left[
        \left\|
            \mu_{\dreg}^{\xi_h(\sigma)}\vert_\Lambda
            -
            \mu_{\dreg}^{\mathbf 1}\vert_\Lambda
        \right\|_{\mathrm{TV}}
    \right]
    \le
    C_\Lambda\beta^{r_h}
    +
    A_\Lambda
    \exp\left(
        -c d^{\sqrt{r_h}}
    \right)
\]
for some constants $A_\Lambda,C_\Lambda>0$, $c>0$, $\beta\in(0,1)$. Letting $h\to\infty$, the right-hand side tends to zero. By Lemma~\ref{lem:wired-limit}, we have $\mu_{\dreg}^{\mathbf 1}\vert_\Lambda\longrightarrow\mu^*\vert_\Lambda$. Hence $\nu\vert_\Lambda=\mu^*\vert_\Lambda$.

Since $\Lambda$ was arbitrary, $\nu$ and $\mu^*$ agree on every finite-dimensional cylinder event. Therefore every wired DLR measure equals $\mu^*$, proving uniqueness.
\end{proof}

\subsubsection{Exterior-induced boundary conditions}
\label{subsec:exterior-induced-boundary}

We now verify that the boundary condition induced by an arbitrary infinite-volume wired Gibbs measure satisfies the hypothesis of Theorem~\ref{thm:wsm}. The main observation is that, whenever none of the children of a vertex is connected to infinity through its own descendant branch, the edges from the vertex to its children are conditionally independent Bernoulli variables with parameter
\[
    \widehat p
    :=
    \frac{p}{p+q(1-p)}.
\]
Since $p>\pc$ is equivalent to $d\widehat p>1$, this conditional product structure generates a supercritical branching mechanism unless the explored open cluster encounters a child that is already connected to infinity. The argument below shows that such a child can be found within a uniformly bounded distance with positive probability. Since every edge has conditional open probability at least $p$, the path from the root to that child is open with uniformly positive probability, and the root is then connected to infinity.

This use of infinite open descendant subtrees has a precursor in the low-temperature uniqueness arguments of Jonasson and Grimmett--Janson, summarized in~\cite[Theorem~10.98]{grimmett2006randomcluster}; see also~\cite{grimmettJanson2005branching,jonasson1999general}. On the binary tree, that $q\ge1$ argument obtains uniqueness under the sufficient condition $p\ge 2q/(2q+1)$, equivalently $\widehat p\ge2/3$, by constructing a random set of vertices that intersects every infinite ray and whose vertices root infinite open subtrees. The role of Lemma~\ref{lem:uniform-branch-survival} below is analogous, but in the $q<1$ setting its probe exploration yields a lower bound uniform under arbitrary exterior conditioning and works throughout the sharp supercritical regime $d\widehat p>1$. On the binary tree, the corresponding condition is $\widehat p>1/2$, rather than the earlier sufficient bound $\widehat p\ge2/3$.

Let $\nu$ satisfy the wired-tree DLR condition and $\sigma\sim\nu$. Let $\xi_h(\sigma)$ be the boundary condition induced on $\partial\dreg$ by the configuration outside $\dreg$, with all infinite exterior components wired together. Explicitly, its wired component is
\[
    \mathcal C_1\bigl(\xi_h(\sigma)\bigr)
    :=
    \left\{
        x\in\partial\dreg:
        x\text{ is connected to infinity through its exterior branch}
    \right\}.
\]
Since the exterior components attached to distinct vertices of $\partial\dreg$ are disjoint infinite $d$-ary branches, $\xi_h(\sigma)$ is a single-component boundary condition.

Fix an orientation of the infinite $\Delta$-regular tree away from a root. For a vertex $u$, let $\mathcal T_u^{\mathrm{ext}}$ denote the infinite $d$-ary descendant branch rooted at $u$, and define
\[
    \mathcal B_u
    :=
    \left\{
        u\text{ is connected to infinity by an open path contained in }
        \mathcal T_u^{\mathrm{ext}}
    \right\}.
\]
We call $u$ \emph{blue} when $\mathcal B_u$ occurs. Let $\mathcal F_u^{\mathrm{ext}}:=\sigma\left(\omega(e):e\notin E(\mathcal T_u^{\mathrm{ext}})\right)$. Thus $\mathcal F_u^{\mathrm{ext}}$ records the entire configuration outside the descendant branch rooted at $u$.

We first isolate the conditional product structure that drives the exploration. Let $u$ be a vertex with children $u_1,\ldots,u_d$, and write
\[
    e_i:=\{u,u_i\},
    \qquad
    \mathcal R_u
    :=
    \bigcup_{i=1}^d\mathcal B_{u_i}.
\]
The event $\mathcal R_u$ means that at least one child of $u$ is blue.

\begin{lemma}
\label{lem:conditional-star-factorization}
Let $0<q<1$, and let $\nu$ be an infinite-volume measure satisfying the wired-tree DLR condition. Fix a vertex $u$ with children $u_1,\ldots,u_d$, and write $e_i:=\{u,u_i\}$. Let $\mathcal H_u := \sigma\left(\omega(e):e\notin\{e_1,\ldots,e_d\}\right)$. Thus $\mathcal H_u$ fixes every edge except the $d$ edges from $u$ to
its children. On $\mathcal R_u^c$, their conditional distribution given
$\mathcal H_u$ is the product Bernoulli distribution with parameter
$\widehat p$. More precisely, for every
$(a_1,\ldots,a_d)\in\{0,1\}^d$,
\[
    \mathbf 1_{\mathcal R_u^c}
    \nu\left(
        \omega(e_i)=a_i,\ i\in[d]
        \,\middle|\,
        \mathcal H_u
    \right)
    =
    \mathbf 1_{\mathcal R_u^c}
    \prod_{i=1}^d
    \widehat p^{a_i}
    (1-\widehat p)^{1-a_i}
\]
almost surely.
\end{lemma}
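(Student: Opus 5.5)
The plan is to apply the DLR condition on a small finite region and then compute the conditional law of the star edges from the finite random-cluster weights. Take $T$ to be the star consisting of $u$ and the edges $e_1,\ldots,e_d$, which is a finite connected subtree. Then $\mathcal F_{T^c}=\mathcal H_u$, and Definition~\ref{def:dlr} gives, $\nu$-a.s., that the conditional law of $(\omega(e_i))_i$ given $\mathcal H_u$ is $\mu_T^{\xi_T(\eta)}$. Here $\eta$ is the exterior configuration and $\xi_T(\eta)$ is the induced partition on $\partial_{\mathrm E}T=\{u,u_1,\ldots,u_d\}$. So the claim reduces to a deterministic statement: for every exterior $\eta$ in $\mathcal R_u^c$, the measure $\mu_T^{\xi_T(\eta)}$ is the product of $\operatorname{Ber}(\widehat p)$.

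The key step is to identify $\xi_T(\eta)$ on $\mathcal R_u^c$. We use that, with $e_1,\ldots,e_d$ removed, each child $u_i$ touches exterior edges only inside its descendant branch $\mathcal T_{u_i}^{\mathrm{ext}}$. These branches are pairwise disjoint, and each is disjoint from the component of the exterior containing $u$, namely the parent side. Hence $u_i$ cannot be exterior-connected to $u$ or to $u_j$ for $j\neq i$. Rule (ii) could still wire $u_i$ to something, but only if $u_i$ reaches an infinite open component through exterior edges. That is possible only inside $\mathcal T_{u_i}^{\mathrm{ext}}$, where it is exactly the event $\mathcal B_{u_i}$. On $\mathcal R_u^c$ this fails for every $i$. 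Therefore each $u_i$ is a singleton part of $\xi_T(\eta)$, while $u$ may belong to an arbitrary class.

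Then compute directly. In $T$ the open edges form a forest joining $u$ to distinct singleton vertices, and no two of these vertices are pre-identified. So each open $e_i$ reduces the cluster count by exactly one, independently of the other edges, and $q^{\kappa^\xi(\omega)}=q^{\kappa_0}\,q^{-|\omega|}$ with $\kappa_0$ constant. The weight therefore factors as $\prod_i \bigl(p/q\bigr)^{a_i}(1-p)^{1-a_i}$. Normalizing each factor gives $\widehat p^{a_i}(1-\widehat p)^{1-a_i}$, since $(p/q)/(p/q+1-p)=p/(p+q(1-p))$. This is the same one-edge computation as in Section~\ref{subsec:critical-point}. Multiplying by $\mathbf 1_{\mathcal R_u^c}$, which is $\mathcal H_u$-measurable, gives the almost-sure identity.

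The main obstacle is the careful check that rule (ii) cannot create identifications among the $u_i$ through routes other than their own branches. This is where the tree structure enters: the path from $u_i$ to $u$ or to another branch must pass through some $e_j$, and every such edge has been removed. I would also verify that $\mathcal R_u$ is $\mathcal H_u$-measurable, which holds because each $\mathcal B_{u_i}$ depends only on edges inside $\mathcal T_{u_i}^{\mathrm{ext}}$. That measurability is what allows the indicator to be moved inside the conditional expectation.
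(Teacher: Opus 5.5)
Your proposal is correct and follows essentially the same route as the paper. You apply the DLR condition on the star around $u$, which the paper does implicitly by fixing the exterior configuration $\eta$ off $e_1,\ldots,e_d$. You then use the tree structure to show that on $\mathcal R_u^c$ no $u_i$ is identified with anything (so the induced partition of $\partial_{\mathrm E}T$ is in fact free), and compute the weight $C(\eta)\,(p/q)^{|A|}(1-p)^{d-|A|}$ exactly as the paper does, with your choice of $T$ making $\mathcal F_{T^c}=\mathcal H_u$ slightly more explicit.
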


\begin{proof} Each event $\mathcal B_{u_i}$ depends only on the edges in $\mathcal T_{u_i}^{\mathrm{ext}}$ and is therefore $\mathcal H_u$-measurable. In particular, $\mathcal R_u^c$ is $\mathcal H_u$-measurable.

Fix an exterior configuration $\eta$ of all edges other than $e_1,\ldots,e_d$ for which $\mathcal R_u^c$ occurs. Since $u_i$ is not blue, the open component containing $u_i$ in $\mathcal T_{u_i}^{\mathrm{ext}}$ is finite. The $d$ components containing $u_1,\ldots,u_d$ are pairwise distinct and are not identified through infinity.

The component containing $u$ may be finite or belong to the wired infinite class; in either case, merging one of these distinct finite child components into it decreases the cluster count by one. For $A\subseteq[d]$, set $e_i$ to be open exactly when $i\in A$. Relative to the configuration in which all $e_i$ are closed, opening the edges indexed by $A$ merges $|A|$ distinct child components with the component containing $u$. Thus the number of clusters decreases by exactly $|A|$. Consequently, for a factor $C(\eta)>0$ independent of $A$, the conditional weight of the configuration $A$ is
\[
    W_\eta(A)
    =
    C(\eta)
    p^{|A|}
    (1-p)^{d-|A|}
    q^{-|A|}
    =
    C(\eta)
    \prod_{i=1}^d
    \left(\frac pq\right)^{\mathbf 1_{\{i\in A\}}}
    (1-p)^{\mathbf 1_{\{i\notin A\}}}.
\]
Summing over all $A\subseteq[d]$ gives the normalizing factor $C(\eta)((1-p)+p/q)^d$. Therefore
\[
    \frac{W_\eta(A)}{\sum_{A'\subseteq[d]}W_\eta(A')}
    =
    \prod_{i=1}^d
    \widehat p^{\mathbf 1_{\{i\in A\}}}
    (1-\widehat p)^{\mathbf 1_{\{i\notin A\}}},
\]
where $\widehat p =\frac{p}{p+q(1-p)}$. This proves the claimed conditional product distribution.
\end{proof}

The next lemma gives the uniform local survival estimate needed below. It says that no exterior configuration can make $\mathcal B_u$ arbitrarily unlikely.

\begin{lemma}[Uniform branch survival]
\label{lem:uniform-branch-survival}
Let $0<q<1$, $\Delta\ge3$, and $p>\pc$. There exists $\theta\in(0,1]$, depending only on $p,q,$ and $d$, such that, for every infinite-volume measure $\nu$ satisfying the wired-tree DLR condition and every vertex $u$, $\nu\left(\mathcal B_u\,\middle|\,\mathcal F_u^{\mathrm{ext}}\right)\ge\theta $ almost surely.
\end{lemma}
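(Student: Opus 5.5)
The plan is to prove a finite-depth version of the statement by induction on the depth and then let the depth tend to infinity. For $n\ge0$ and a vertex $v$, let $C_n(v)$ be the event that $v$ is joined by an open path inside $\mathcal T_v^{\mathrm{ext}}$ to a vertex at depth $n$ below $v$. These events are local and decrease in $n$ to $\mathcal B_v$ (König's lemma in the locally finite tree), and $\mathcal B_v\subseteq C_n(v)$ for every $n$. Set
\[
    \theta_n:=\inf_{\nu,v}\ \operatorname*{ess\,inf}\ \nu\bigl(C_n(v)\,\big|\,\mathcal F_v^{\mathrm{ext}}\bigr),
\]
where the infimum runs over all wired DLR measures and all vertices. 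It suffices to find $t_0>0$, depending only on $p,q,d$, with $\theta_n\ge t_0$ for every $n$. Conditional monotone convergence then gives $\nu(\mathcal B_u\mid\mathcal F_u^{\mathrm{ext}})\ge t_0=:\theta$. Throughout I use two conditional facts, both valid under arbitrary conditioning on all other edges. The first is Lemma~\ref{lem:conditional-star-factorization}: on $\mathcal R_v^c$ the child edges of $v$ are i.i.d.\ $\operatorname{Ber}(\widehat p)$. The second is the one-edge rule of Section~\ref{subsec:critical-point}: every single edge is open with conditional probability at least $\min\{p,\widehat p\}=p$.

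A one-generation recursion does not suffice. Conditioning on $\mathcal H_v$ and splitting on $\mathcal R_v$ gives a bound of the form $\theta_n\ge\mathbb E\,h(N)$. Here $N$ counts children $v_i$ for which $C_{n-1}(v_i)$ holds, and $h(N)=\min\{p,1-(1-\widehat p)^N\}$. On $\mathcal R_v$ a blue child is reached through its edge with probability at least $p$. On $\mathcal R_v^c$ the edges are independent $\widehat p$-coins, independent of the $\mathcal H_v$-measurable subtree events.

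To lower-bound $N$, I reveal the children sequentially. Since $\mathcal F_{v_i}^{\mathrm{ext}}$ contains $\mathcal F_v^{\mathrm{ext}}$ and all information about $v_1,\dots,v_{i-1}$, the indicators $\mathbf 1_{C_{n-1}(v_i)}$ stochastically dominate i.i.d.\ $\operatorname{Ber}(\theta_{n-1})$ variables. The recursion this produces, however, has linear coefficient $dp$ at $0$, while the hypothesis only gives $d\widehat p>1$.

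The remedy is to work in blocks of depth $L$, chosen so that $(d\widehat p)^L$ is large. In the block of depth $L$ below $v$, I explore the open cluster of $v$ generation by generation. At each explored vertex $w$, condition on $\mathcal H_w$ and distinguish two cases.
\begin{enumerate}[(i)]
    \item If $\mathcal R_w$ occurs, the exploration stops and we try to open the edge to a blue child. The path from $v$ to $w$ is already open, so this yields $\mathcal B_v$, hence $C_n(v)$, with conditional probability at least $p$. Since this happens before depth $L$, the probability of the event is at least $p^L$ overall.
    \item If $\mathcal R_w^c$ occurs at every explored vertex, the explored cluster is exactly a Galton--Watson tree with $\operatorname{Bin}(d,\widehat p)$ offspring. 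The reached vertices $w_1,\dots,w_Z$ at depth $L$ then each carry an independent-enough chance $\ge\theta_{n-L}$ of $C_{n-L}(w_j)$. This again uses the sequential conditioning on $\mathcal F_{w_j}^{\mathrm{ext}}$, which contains all block edges and the earlier subtrees.
\end{enumerate}
Combining the cases should give
\[
    \theta_n\ge F_L(\theta_{n-L}),
    \qquad
    F_L(t):=\min\Bigl\{p^L,\ \mathbb E\bigl[1-(1-t)^{Z_L}\bigr]\Bigr\},
\]
where $Z_L$ is the generation-$L$ size of the $\operatorname{Bin}(d,\widehat p)$ Galton--Watson process.

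The justification of this combined bound is the step I expect to be the main obstacle. The explored vertices, the stopping case and the conditioning $\sigma$-fields have to be organised so that the Markov-type bounds apply in the right order. The delicate point is that the event $\mathcal R_w$ depends on deeper edges that are later reused in the events $C_{n-L}(w_j)$. I would handle this by making the block exploration a stopping-set argument on the edges of the block alone, and then conditioning on $\mathcal F_{w_j}^{\mathrm{ext}}$ for the subtrees hanging below. The latter $\sigma$-field does contain the block, so the bound $\theta_{n-L}$ remains applicable.

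Granting this, finishing is elementary. We have $\mathbb E[1-(1-t)^{Z_L}]\ge t\,\mathbb E Z_L-\tfrac{t^2}{2}\mathbb E Z_L^2$, and $\mathbb E Z_L=(d\widehat p)^L$. Fix $L$ with $(d\widehat p)^L\ge 4$. Then there is $t_0\in(0,p^L]$, depending only on $p,q,d$, such that $F_L(t)\ge t_0$ for all $t\in[t_0,1]$ by monotonicity of $F_L$. The base cases $\theta_0=1$ and $\theta_j\ge p^j\ge p^L\ge t_0$ for $j<L$ follow from the one-edge bound along a single ray. Induction then gives $\theta_n\ge t_0$ for all $n$, which completes the argument.
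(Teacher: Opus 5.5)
There is a genuine gap, and it is in the block exploration itself. The $\operatorname{Bin}(d,\widehat p)$ offspring law at an explored vertex $w'$ holds only on $\mathcal R_{w'}^c$. So your exploration must reveal $\mathbf 1_{\mathcal R_{w'}}$ before it reveals the child edges of $w'$. But $\mathcal R_{w'}^c$ says that every child of $w'$ is non-blue. Hence every explored vertex $w\neq v$, and every depth-$L$ endpoint $w_1,\dots,w_Z$, is already known to satisfy $\mathcal B_w^c$ when it is processed. This breaks both of your cases.

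\emph{Case (i).} At a vertex $w$ of depth $\ge1$, the event $\mathcal R_w$ together with $\mathcal B_w^c$ forces every edge from $w$ to a blue child to be closed. The conditional success probability is therefore $0$, not $\ge p$. The one-edge bound cannot be used after conditioning on an event that involves that very edge. Re-randomizing the path from $v$ does not help either, because the event that the exploration stopped at $w$ is itself determined by those path edges. So case (i) works only at the root, which is your one-generation recursion with rate $dp$.

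\emph{Case (ii).} The history is not contained in $\mathcal F_{w_j}^{\mathrm{ext}}$. Besides the block edges and the earlier subtrees, it contains $\mathcal B_{w_j}^c$, which is measurable with respect to the edges of $\mathcal T_{w_j}^{\mathrm{ext}}$. Your $\theta_{n-L}$ gives no control of $\nu\bigl(C_{n-L}(w_j)\mid\mathcal F_{w_j}^{\mathrm{ext}},\mathcal B_{w_j}^c\bigr)$. In fact this quantity tends to $0$ as $n-L\to\infty$, since $C_m(w_j)\downarrow\mathcal B_{w_j}$.

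So $\theta_n\ge F_L(\theta_{n-L})$ does not follow. Your proposed remedy, a stopping-set argument on block edges alone, cannot repair it. The event $\mathcal R_w$ is not a block event, and without it the child edges are only known to be open with probability $\ge p$, which returns you to rate $dp$.

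The missing idea is to work conditionally on failure rather than against it. The paper stays on $\mathcal B_u^c$ throughout. There, edges to blue children are forced closed and edges to non-blue children are i.i.d.\ $\operatorname{Ber}(\widehat p)$. This information is consistent down the tree, since every explored vertex is non-blue.

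The paper then adds independent probe marks of density $\varepsilon$, with $d(1-\varepsilon)\widehat p>1$. These let the exploration test whether a probe child is blue without inspecting the probe edge. An induction on depth, conditional on $\widehat{\mathcal F}_v^{\mathrm{ext}}$ and $\mathcal B_v^c$, shows that with conditional probability at least $\delta$ the exploration finds such a probe edge hanging off the active open cluster.

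On $\mathcal B_u^c$ that edge must be closed. But it was never inspected, and its conditional odds of being open are at least $p/(1-p)$. Flipping it gives $a_u\ge\frac{p}{1-p}\,\delta\,(1-a_u)$, hence $a_u\ge\theta$. Some such device, turning a supercritical exploration given failure into a lower bound on success, is needed; a direct recursion for success probabilities of the kind you propose does not close.
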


\begin{proof}
Choose $\varepsilon\in(0,1)$ sufficiently small so that $d(1-\varepsilon)\widehat p>1$. For $b\in\{0,\ldots,d\}$, define
\[
    \Psi_b(z)
    :=
    1-(1-\varepsilon)^b
    \left(
        1-(1-\varepsilon)\widehat p z
    \right)^{d-b}.
\]
We have $\Psi_0(0)=0$ and $\Psi_0'(0)=d(1-\varepsilon)\widehat p>1$, whereas $\Psi_b(0)=1-(1-\varepsilon)^b>0$ for $b\ge1$. Hence there exists $\delta\in(0,1)$, depending only on $p,q,$ and $d$, such that $\Psi_b(\delta)\ge\delta$, for $b=0,\ldots,d$.

Independently of $\omega\sim\nu$, assign to every oriented child edge $e$ a Bernoulli mark $I_e$ with parameter $\varepsilon$, and denote their joint law by
\[
    \overline\nu
    :=
    \nu
    \otimes
    \bigotimes_e \operatorname{Ber}(\varepsilon),
\]
where the product runs over all oriented child edges. Edges with $I_e=1$ are called \emph{probes}. Starting from $u$, run the following breadth-first exploration. At each active vertex $v$, inspect the events $\mathcal B_w$ for its probe children $w$, without inspecting the corresponding probe edges. If a probe child is blue, select the first such probe edge and stop. For every nonprobe child edge, reveal its state and declare its child active when the edge is open.

We first show that, conditional on $\mathcal B_u^c$, with probability at least $\delta$ the exploration selects a probe edge $e=\{v,w\}$ whose parent $v$ is active and whose child $w$ is blue. This is the unrevealed contact edge whose conditional open and closed probabilities will be compared below.

The star calculation in Lemma~\ref{lem:conditional-star-factorization} gives the following slightly more general fact. Fix an active vertex $v$ with children $v_1,\ldots,v_d$, write $e_i:=\{v,v_i\}$, and condition on the configuration $\eta$ off these $d$ edges. Let $S:=\{i:\mathcal B_{v_i}\text{ occurs}\}$ and suppose $|S|=b$. On $\mathcal B_v^c$, every edge $e_i$ with $i\in S$ must be closed. For $A\subseteq[d]\setminus S$, consider the star configuration in which, among the nonblue-child edges, $e_i$ is open exactly when $i\in A$. 

Its unnormalized conditional weight, given $\eta$ and $\mathcal B_v^c$, is
\[
    W_\eta(A)
    =
    C(\eta)
    (1-p)^b
    p^{|A|}
    (1-p)^{d-b-|A|}
    q^{-|A|},
\]
where $C(\eta)>0$ does not depend on $A$. Indeed, each nonblue-child open edge merges one distinct finite child component into the component of $v$, decreasing the cluster count by one. After removing the factors independent of $A$, the weight is proportional to $\left(p/q\right)^{|A|}(1-p)^{d-b-|A|}$. Normalizing over $A\subseteq[d]\setminus S$ shows that the $d-b$ nonblue-child edges are independent Bernoulli variables with parameter $\widehat p$.

Let $\widehat{\mathcal F}_v^{\mathrm{ext}}:=\mathcal F_v^{\mathrm{ext}}\vee\sigma\left(I_e:e\notin E(\mathcal T_v^{\mathrm{ext}})\right)$, and let $\mathcal Q_n(v)$ be the event that the exploration started from $v$ either selects a probe edge before generation $n$ or reaches generation $n$. We claim inductively that
\begin{equation}
\label{eq:cavity-finite-depth}
    \overline\nu\left(
        \mathcal Q_n(v)
        \,\middle|\,
        \widehat{\mathcal F}_v^{\mathrm{ext}},
        \mathcal B_v^c
    \right)
    \ge\delta.
\end{equation}

The case $n=0$ is immediate. Assume~\eqref{eq:cavity-finite-depth} for $n$, and condition on
\[
    S_v:=\{w:w\text{ is a blue child of }v\}=S,
    \qquad
    |S|=b.
\]
Order the children as $w_1,\ldots,w_d$ and process their branches in this order. Before processing $w_j$, let $\mathcal G_{j-1}$ be generated by $\widehat{\mathcal F}_v^{\mathrm{ext}}$, $\mathbf 1_{\mathcal B_v}$, $S_v$, and the probe marks, edge states, and descendant-exploration outcomes revealed while processing $w_1,\ldots,w_{j-1}$. 

In the $j$th branch, first reveal the probe mark on $e_j:=\{v,w_j\}$. If $w_j$ is blue and $e_j$ is a probe, the exploration selects $e_j$. If $w_j$ is nonblue and $e_j$ is not a probe, reveal its state; when it is open, continue with the depth-$n$ exploration rooted at $w_j$. Let $\mathcal G_j$ be the sigma-field after this processing, and let $\mathcal D_j$ be the event that the $j$th branch produces neither a selected probe edge nor a continuation to depth $n$.

Fix $j$ and consider $\{S_v=S\}\cap\bigcap_{k<j}\mathcal D_k$. If $w_j\notin S$, then no information from the descendant branch $\mathcal T_{w_j}^{\mathrm{ext}}$ has yet been exposed, apart from the event $\mathcal B_{w_j}^c$. After revealing the nonprobe mark and the state of $e_j$, set $\mathcal H_j:=\mathcal G_{j-1}\vee\sigma(I_{e_j},\omega(e_j))$. The reveal order gives the sigma-field inclusion $\mathcal H_j\subseteq\widehat{\mathcal F}_{w_j}^{\mathrm{ext}}\vee\sigma(\mathbf 1_{\mathcal B_{w_j}})$.

The induction hypothesis at $w_j$, followed by the tower property from $\widehat{\mathcal F}_{w_j}^{\mathrm{ext}}\vee\sigma(\mathbf 1_{\mathcal B_{w_j}})$ down to $\mathcal H_j$, therefore gives conditional probability at least $\delta$ for $\mathcal Q_n(w_j)$ whenever $w_j$ is nonblue and $e_j$ is open. The star calculation, which applies Lemma~\ref{lem:conditional-star-factorization} after forcing every edge to a blue child to be closed on $\mathcal B_v^c$, shows that, conditional on $\mathcal B_v^c$ and $S_v=S$, the remaining nonblue-child edges are independent Bernoulli variables with parameter $\widehat p$. This remains true after conditioning on the previously processed branches. Finally, the probe marks are independent of the random-cluster configuration and of one another. Combining these three facts gives, on $\bigcap_{k<j}\mathcal D_k$,
\[
    \overline\nu\left(
        \mathcal D_j
        \,\middle|\,
        \mathcal G_{j-1}
    \right)
    \le
    \begin{cases}
        1-\varepsilon,
        & w_j\in S,\\
        1-(1-\varepsilon)\widehat p\,\delta,
        & w_j\notin S.
    \end{cases}
\]

Iterating the tower property along $(\mathcal G_j)_{j=0}^d$ gives
\[
    \overline\nu\left(
        \mathcal Q_{n+1}(v)^c
        \,\middle|\,
        \widehat{\mathcal F}_v^{\mathrm{ext}},
        \mathcal B_v^c,
        S_v=S
    \right)
    \le
    (1-\varepsilon)^b
    \left(
        1-(1-\varepsilon)\widehat p\,\delta
    \right)^{d-b}
    =
    1-\Psi_b(\delta).
\]
The factor $(1-\varepsilon)^b$ corresponds to having no probe among the blue children, while the second factor corresponds to having no successful continuation through the nonblue children. Thus, for every $S$,
\[
    \overline\nu\left(
        \mathcal Q_{n+1}(v)
        \,\middle|\,
        \widehat{\mathcal F}_v^{\mathrm{ext}},
        \mathcal B_v^c,
        S_v=S
    \right)
    \ge
    \Psi_{|S|}(\delta)
    \ge
    \delta.
\]
Averaging over $S_v$ proves~\eqref{eq:cavity-finite-depth} for $n+1$.

Here generations are measured by graph distance from the starting vertex. The events $\mathcal Q_n(u)$ decrease with $n$. If $\bigcap_{n\ge0}\mathcal Q_n(u)$ occurs without selecting an edge, then the finitely branching tree of active vertices contains vertices at every depth. K\"onig's infinity lemma therefore gives an infinite downward ray of open nonprobe edges from $u$, which implies $\mathcal B_u$. Conditional continuity from above in~\eqref{eq:cavity-finite-depth} now shows that, on $\mathcal B_u^c$, the probability that the exploration selects a probe edge is at least $\delta$.

Let $E_u$ denote the probe edge selected by the exploration, and set $\mathcal S_u:=\{E_u\text{ is defined}\}$. For a fixed oriented edge $e=\{v,w\}$, let $\mathcal A_e:=\{E_u=e\}$. To determine whether $\mathcal A_e$ occurs, the exploration checks the revealed path from $u$ to $v$, the earlier tested branches, the probe marks, and the event $\mathcal B_w$. It never checks whether $e$ itself is open or closed. Therefore $\mathcal A_e$ is measurable from the probe marks and the configuration off $e$.

Write $\mathbf I$ for the complete family of probe marks $(I_f)$, where $f$ ranges over all oriented child edges of $\mathcal T_\infty$. For every edge $e$, the single-edge conditional open probability is either $p$ or $\widehat p$. Since $\mathbf I$ is independent of the random-cluster configuration, the same is true under $\overline\nu$, and hence
\[
    \frac{
        \overline\nu\left(
            \omega(e)=1
            \,\middle|\,
            \omega_{E(\mathcal T_\infty)\setminus\{e\}},
            \mathbf I
        \right)
    }{
        \overline\nu\left(
            \omega(e)=0
            \,\middle|\,
            \omega_{E(\mathcal T_\infty)\setminus\{e\}},
            \mathbf I
        \right)
    }
    \ge
    \frac{p}{1-p}.
\]
Since $\mathcal A_e$ and $\mathcal F_u^{\mathrm{ext}}$ are measurable off $e$, integrating the preceding inequality over the configurations off $e$ gives
\[
    \overline\nu\left(
        \mathcal A_e\cap\{\omega(e)=1\}
        \,\middle|\,
        \mathcal F_u^{\mathrm{ext}}
    \right)
    \ge
    \frac{p}{1-p}
    \overline\nu\left(
        \mathcal A_e\cap\{\omega(e)=0\}
        \,\middle|\,
        \mathcal F_u^{\mathrm{ext}}
    \right).
\]
On $\mathcal A_e\cap\{\omega(e)=1\}$, the active open path from $u$ reaches the parent endpoint of $e$, and the child endpoint is blue; hence $\mathcal B_u$ occurs. On $\mathcal B_u^c\cap\mathcal S_u$, the selected edge is necessarily closed. The tree has countably many edges, so the edgewise conditional inequalities may be taken to hold on one common full-measure event. Since the events $(\mathcal A_e)_e$ are disjoint, applying the inequality first to finite collections of possible selected edges and then using conditional monotone convergence gives
\[
\begin{aligned}
    \nu\left(
        \mathcal B_u
        \,\middle|\,
        \mathcal F_u^{\mathrm{ext}}
    \right)
    &\ge
    \frac{p}{1-p}
    \overline\nu\left(
        \mathcal B_u^c\cap\mathcal S_u
        \,\middle|\,
        \mathcal F_u^{\mathrm{ext}}
    \right).
\end{aligned}
\]
Let $a_u:=\nu\left(\mathcal B_u\,\middle|\,\mathcal F_u^{\mathrm{ext}}\right)$. By the preceding exploration estimate, $\overline\nu\left(\mathcal B_u^c\cap\mathcal S_u\,\middle|\,\mathcal F_u^{\mathrm{ext}}\right)\ge\delta(1-a_u)$. Therefore
\[
    a_u
    \ge
    \frac{p}{1-p}\delta(1-a_u)
    \implies
    a_u
    \ge
    \frac{p\delta}{1-p+p\delta}
    =:
    \theta
    >0.
\]
The constant $\theta$ depends only on $p,q,$ and $d$ through $\delta$.
\end{proof}

The preceding lemma gives precisely the conditional estimate needed for stochastic domination on the boundary of a finite ball. The exterior branches attached to distinct boundary vertices are edge-disjoint, so the branch-survival indicators can be exposed sequentially. At each step, Lemma~\ref{lem:uniform-branch-survival} supplies the same conditional lower bound, irrespective of the configurations already exposed in the other branches.

We restate the lemma before giving its proof.

\inducedThetaWired*

\begin{proof}
Enumerate the boundary vertices as $\partial\dreg=\{x_1,\ldots,x_m\}$. For $i\in[m]$, define $X_i:=\mathbf 1_{\mathcal B_{x_i}}$. By Lemma~\ref{lem:uniform-branch-survival}, $\nu\left(X_i=1\,\middle|\,\mathcal F_{x_i}^{\mathrm{ext}}\right)\ge\theta>0$ almost surely.

For every $j<i$, the event $\mathcal B_{x_j}$ depends only on the exterior branch rooted at $x_j$. Since the exterior branches rooted at distinct vertices of $\partial\dreg$ are edge-disjoint, $X_1,\ldots,X_{i-1}$ are measurable with respect to $\mathcal F_{x_i}^{\mathrm{ext}}$. Hence the tower property gives $\nu\left(X_i=1\,\middle|\,X_1,\ldots,X_{i-1}\right)\ge\theta$ almost surely.

The standard sequential coupling therefore produces independent Bernoulli random variables $Y_1,\ldots,Y_m$ with parameter $\theta$ such that $Y_i\le X_i$ for every $i\in[m]$ almost surely. Equivalently, the random set $A:=\{x_i:Y_i=1\}$ is an independent Bernoulli subset of $\partial\dreg$ with density $\theta$ and satisfies $A\subseteq\{x_i:\mathcal B_{x_i}\text{ occurs}\}$.

Whenever $\mathcal B_{x_i}$ occurs, $x_i$ is connected to infinity through its exterior branch. Since all infinite exterior components are identified in the wired-tree DLR boundary condition, $\{x_i:\mathcal B_{x_i}\text{ occurs}\}\subseteq\mathcal C_1\bigl(\xi_h(\sigma)\bigr)$. Consequently, $A\subseteq\mathcal C_1\bigl(\xi_h(\sigma)\bigr)$ almost surely. Thus the distribution of $\xi_h(\sigma)$ is $\theta$-wired.
\end{proof}

Lemma~\ref{lem:induced-theta-wired} supplies the missing bridge between the infinite-volume wired DLR condition and Theorem~\ref{thm:wsm}. In particular, the boundary condition induced by any wired Gibbs measure satisfies the same positive-density hypothesis, with constants depending only on $p,q,$ and $d$ and not on the measure or on the radius $h$.

\subsection{The subcritical and critical regimes \texorpdfstring{$p\le\pc$}{p <= pc}}

The result in this regime is classical; see~\cite[Theorem~10.82(d)]{grimmett2006randomcluster} and its $d$-ary analogue. We include the following direct proof in our DLR convention. No contraction argument is needed: the one-edge DLR rule and subcritical Bernoulli domination rule out infinite exterior connections; once those connections disappear, every edge has the same constant conditional open probability.

\begin{theorem}[Uniqueness at and below criticality]
\label{thm:subcritical-uniqueness}
Let $0<q<1$, $\Delta\ge3$, and $p\le\pc$. There exists a unique infinite-volume random-cluster measure satisfying the wired DLR specification of Definition~\ref{def:dlr}, given by
\[
    \mu^*
    =
    \bigotimes_{e\in E(\mathcal T_\infty)}
    \operatorname{Ber}(\widehat p).
\]
\end{theorem}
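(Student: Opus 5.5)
The plan has two halves: show that the product measure satisfies the wired DLR specification, and show that any wired DLR measure $\nu$ must equal it. In both halves the central claim is that, for $p\le\pc$, almost surely no vertex is blue, i.e.\ $\nu(\mathcal B_u)=0$ for every $u$.

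\emph{Step 1: no blue vertices under any wired DLR measure.} Fix $u$. If a child $u_i$ of $u$ is blue, then every vertex on an infinite open ray from $u_i$ inside $\mathcal T_{u_i}^{\mathrm{ext}}$ is also blue. I would therefore show $\nu(\mathcal B_u)=0$ through a depth-truncated exploration.

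Let $A_n(u)$ be the event that $u$ is connected by an open path inside $\mathcal T_u^{\mathrm{ext}}$ to generation $n$. Then $\mathcal B_u\subseteq A_n(u)$ for every $n$. Explore the open cluster of $u$ downward. On $\mathcal B_u$, the cluster restricted to $\mathcal T_u^{\mathrm{ext}}$ contains an infinite ray, so a direct product bound for $\nu(A_n)$ fails: some children are blue, and Lemma~\ref{lem:conditional-star-factorization} gives no information at those vertices.

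To get around this, I would use the single-edge DLR rule. For every edge, the conditional open probability is either $p$ or $\widehat p$, and both are at most $\widehat p$. By sequentially revealing edges in breadth-first order, each newly revealed edge has conditional open probability at most $\widehat p$ given everything revealed so far. This follows from the tower property applied to the single-edge kernel, which is exactly the argument used for the $p/(1-p)$ ratio in Lemma~\ref{lem:uniform-branch-survival}, but run in the opposite direction.

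Hence the explored cluster is stochastically dominated by a Galton--Watson tree with $\mathrm{Bin}(d,\widehat p)$ offspring. Since $d\widehat p\le1$, this tree dies out almost surely, with the critical case included. So $\nu(A_n(u))\to0$, giving $\nu(\mathcal B_u)=0$. Taking a union over the countably many vertices, and noting that any infinite open cluster contains a vertex that is blue in the orientation away from $\rho$, we conclude that $\nu$-a.s.\ there is no infinite open component.

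\emph{Step 2: identify $\nu$.} On the event that there is no infinite component, the exterior partition $\xi_T(\eta)$ of any finite subtree $T$ comes only from finite exterior paths. On a tree, two vertices of $\partial_{\mathrm E}T$ cannot be joined by a path avoiding $E_T$, because the unique path between them lies in $T$. So the boundary condition is free, and $\gamma_T(\cdot\mid\eta)=\mu_T^{\mathbf 0}$. On a finite tree with free boundary, every open edge merges two clusters, so $\mu_T^{\mathbf 0}$ is $\bigotimes\operatorname{Ber}(\widehat p)$. Taking $T$ to be balls, the DLR condition identifies every finite marginal of $\nu$ as this product.

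\emph{Step 3: existence.} The product measure $\bigotimes_e\operatorname{Ber}(\widehat p)$ has no infinite cluster, since it is a Galton--Watson tree with mean $d\widehat p\le1$ along each branch (the root has $\Delta$ branches). Its conditional law inside any $T$ is therefore the same product, which matches $\gamma_T$ by the computation in Step 2. So it is a wired DLR measure. It also coincides with the all-wired local limit: by Theorem~\ref{thm:fixed-point-structure}(i), the messages $\phi^{(k)}(\infty)\to1$, and a message tending to $1$ means the incoming branches connect to the boundary with vanishing probability.

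The main obstacle is the domination claim in Step 1. A revealed closed edge could in principle sit where the conditional open probability is $p$ rather than $\widehat p$. That does not matter, because domination needs only the upper bound $\widehat p$ at each revealed edge, and $\widehat p\ge p$. The care needed is to condition on the full configuration off the edge, not just on the revealed part, and then average.
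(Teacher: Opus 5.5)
Your proposal is correct and is essentially the paper's argument. The single-edge DLR rule caps every conditional open probability at $\widehat p$, so $\nu$ is dominated by Bernoulli$(\widehat p)$ percolation, which has no infinite cluster when $d\widehat p\le1$; the exterior-induced boundary is then free, the kernel is the $\operatorname{Ber}(\widehat p)$ product, and existence is checked the same way. The differences are cosmetic: you use a Galton--Watson exploration instead of global domination, and you identify $\nu$ through the free kernel on balls rather than through the exact one-edge probability $\widehat p$. Your extra claim about the all-wired local limit is not needed for this statement.
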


\begin{proof}
Write $d:=\Delta-1$. Since $d\widehat p\le1$, Bernoulli bond percolation with parameter $\widehat p$ has no infinite open cluster. Consequently, under the product measure $\bigotimes_e\operatorname{Ber}(\widehat p)$, the exterior-induced boundary condition is almost surely free, and the finite-tree random-cluster kernel is again product Bernoulli with parameter $\widehat p$. Thus this product measure satisfies the wired DLR specification.

Let $\nu$ be any wired DLR measure. For an edge $e$, let $\mathcal F_{e^c}:=\sigma\bigl(\omega(f):f\in E(\mathcal T_\infty)\setminus\{e\}\bigr)$. The single-edge DLR rule gives conditional open probability either $p$ or $\widehat p$, according to whether the endpoints of $e$ are already connected through the exterior configuration. Since $0<q<1$, we have $p\le\widehat p$, and hence
\[
    \nu\left(
        \omega(e)=1
        \,\middle|\,
        \mathcal F_{e^c}
    \right)
    \le
    \widehat p
    \qquad
    \nu\text{-a.s.}
\]
for every edge $e$. For any finite edge set, condition on its complement and reveal its edges sequentially. By the tower property, each conditional open probability remains at most $\widehat p$, so the usual uniform coupling gives domination by independent $\operatorname{Ber}(\widehat p)$ variables. Hence $\nu$ is stochastically dominated by Bernoulli bond percolation with parameter $\widehat p$. The condition $p\le\pc$ is equivalent to $d\widehat p\le1$. Bernoulli bond percolation with parameter $\widehat p$ therefore has no infinite open cluster on $\mathcal T_\infty$, and the same is true under $\nu$. 

Fix an edge $e=\{u,v\}$. Since $\mathcal T_\infty$ is a tree, $u$ and $v$ have no finite alternative path outside $e$. They could be identified through the wired boundary at infinity only if the exterior configuration contained an infinite open component. This has $\nu$-probability zero, so the single-edge DLR rule gives
\[
    \nu\left(
        \omega(e)=1
        \,\middle|\,
        \mathcal F_{e^c}
    \right)
    =
    \widehat p
    \qquad
    \nu\text{-a.s.}
\]
for every edge $e$. Iterating these constant one-edge conditional probabilities over each finite edge set yields
\[
    \nu
    =
    \bigotimes_{e\in E(\mathcal T_\infty)}
    \operatorname{Ber}(\widehat p).
\]
Thus every wired DLR measure equals this product measure, proving uniqueness.
\end{proof}

\begin{corollary}[Wired DLR phase diagram]
\label{cor:wired-dlr-phase-diagram}
Let $0<q<1$, $\Delta\ge3$, and $p\in(0,1)$. The wired DLR specification on $\mathcal T_\infty$ admits a unique infinite-volume random-cluster measure. If $p\le\pc$, this measure is $\bigotimes_{e\in E(\mathcal T_\infty)}\operatorname{Ber}(\widehat p)$. If $p>\pc$, it is the all-wired finite-volume limit $\mu^*$ and satisfies $\mu^*(\rho\leftrightarrow\infty)>0$.
\end{corollary}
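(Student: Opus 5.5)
This corollary is an assembly of results already proved, so the plan is to split on the position of $p$ relative to $\pc$ and invoke the matching theorem in each case.

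For $p\le\pc$, I will quote Theorem~\ref{thm:subcritical-uniqueness}. It gives both existence and uniqueness of a wired DLR measure, and it identifies that measure as $\bigotimes_{e}\operatorname{Ber}(\widehat p)$. This yields the first clause of the corollary.

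For $p>\pc$, I will use three ingredients:
\begin{enumerate}[(i)]
    \item Lemma~\ref{lem:wired-limit} gives the local limit $\mu^*$ of the all-wired ball measures $\mu_{\dreg}^{\mathbf 1}$.
    \item Lemma~\ref{lem:wired-limit-dlr} shows that $\mu^*$ satisfies the wired DLR specification of Definition~\ref{def:dlr}. Theorem~\ref{thm:uniqueness} then shows that every wired DLR measure coincides with $\mu^*$.
    \item Proposition~\ref{prop:wired-limit-percolates} gives $\mu^*(\rho\leftrightarrow\infty)\ge(\widehat x^*-1)/(\widehat x^*+q-1)$. This bound is strictly positive because $x^*>1$, by Theorem~\ref{thm:fixed-point-structure}(ii).
\end{enumerate}

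The two cases $p\le\pc$ and $p>\pc$ exhaust $(0,1)$, so uniqueness holds for every $p$, and the identification of the measure in each regime is exactly as stated.

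There is no real obstacle in this deduction. The only point to check is that the word ``unique'' refers to the same specification in both regimes. Theorem~\ref{thm:subcritical-uniqueness} and Theorem~\ref{thm:uniqueness} are both formulated with respect to the single specification $(\gamma_T)_T$ of Definition~\ref{def:dlr}, whose consistency is Lemma~\ref{lem:wired-kernel-consistency}, so the two uniqueness statements combine directly.
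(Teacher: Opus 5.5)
Your proposal is correct and follows the paper's proof, which simply combines Theorem~\ref{thm:subcritical-uniqueness} for $p\le\pc$ with Theorem~\ref{thm:uniqueness} and Proposition~\ref{prop:wired-limit-percolates} for $p>\pc$. Your extra step of citing Lemmas~\ref{lem:wired-limit} and~\ref{lem:wired-limit-dlr} separately is harmless, since Theorem~\ref{thm:uniqueness} already relies on them.
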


\begin{proof}
Combine Theorems~\ref{thm:uniqueness} and~\ref{thm:subcritical-uniqueness} with Proposition~\ref{prop:wired-limit-percolates}.
\end{proof}

\begin{remark}
The division into the two regimes above agrees with the fixed-point classification in Theorem~\ref{thm:fixed-point-structure}. When $p\le\pc$, the scalar message recursion has only the trivial fixed point $x=1$. Correspondingly, no connection to the wired boundary survives at infinity, and the unique wired DLR measure reduces to independent Bernoulli percolation with parameter $\widehat p$. When $p>\pc$, the recursion has a unique nontrivial attracting fixed point $x^*>1$, and the all-wired finite-volume messages converge to $x^*$. The resulting boundary law determines the unique wired infinite-volume measure $\mu^*$, while Proposition~\ref{prop:wired-limit-percolates} separately verifies that $\mu^*(\rho\leftrightarrow\infty)>0$.

This agreement should not be interpreted as a one-to-one correspondence between all fixed points of the scalar recursion and wired DLR measures. In the supercritical regime, the trivial fixed point $x=1$ remains present but is unstable and is not selected by the wired finite-volume approximation. Thus the infinite-volume uniqueness result reflects not only the fixed points of the message recursion, but also their stability and the boundary condition used to select the limiting measure.
\end{remark}

\section{Negative dependence across wired branches}
\label{sec:negative-dependence}

Throughout this section we consider the random-cluster model with cluster weight $0<q<1$; the general network statements allow arbitrary positive edge activities. The application is to a finite tree $T$ whose leaves are all in one wired boundary class. After these leaves are identified, we denote the resulting single vertex by $\partial_{\mathrm w}$. Thus $\partial T$ is the set of leaves before wiring, whereas $\partial_{\mathrm w}$ is one vertex in the wired quotient graph.

Fix an internal vertex $x$ of $T$. Each component of $T\setminus\{x\}$, together with its incident edge to $x$ and the wired vertex $\partial_{\mathrm w}$, forms a branch joining $x$ to $\partial_{\mathrm w}$. Distinct branches have disjoint edge sets and meet only at these two vertices. The original graph is a tree, but the graph obtained after wiring the leaves need not be a tree; we therefore regard each branch as a finite network with two distinguished vertices, called its \emph{terminals}.

We first analyze the terminal-connectivity indicators of branches placed in parallel and show that their finite-dimensional joint law has the CNA+ property. We then compare the random-cluster laws within a branch conditioned on terminal connection and disconnection. This allows us to use the indicator-level result to prove a covariance inequality for increasing observables supported in disjoint collections of branches and, finally, to pass that inequality to the wired Gibbs measure. The conclusion is a branch-separated form of negative association, rather than full negative association on arbitrary disjoint edge sets.

\subsection{Connectivity indicators for parallel branches}

We begin in a slightly more general setting that isolates the interaction created by two common terminals. Write $[m]:=\{1,\ldots,m\}$. For each $i\in[m]$, let $H_i=(V_i,E_i)$ be a finite graph with two distinguished, distinct vertices $x_i,b_i\in V_i$ called its terminals. The vertices $x_i$ and $b_i$ both belong to the branch $H_i$; they need not be adjacent, and a path between them may run through any number of internal vertices and edges of $H_i$. Whether they are connected by \emph{open} edges depends on the random-cluster configuration.

Assume that the graphs $H_1,\ldots,H_m$ are pairwise vertex-disjoint before gluing. Their \emph{parallel union} $H$ is obtained from their disjoint union by the identifications
\[
    x_1=\cdots=x_m=:x,
    \qquad
    b_1=\cdots=b_m=: \partial_{\mathrm w}.
\]
Thus $x_i$ and $b_i$ are temporary copies, before gluing, of the two common vertices $x$ and $\partial_{\mathrm w}$. After gluing, distinct branches have disjoint edge sets and disjoint internal vertices, and meet only at $x$ and $\partial_{\mathrm w}$. In this abstract statement, $\partial_{\mathrm w}$ is simply the second common terminal; the notation anticipates its role as the wired boundary vertex in the tree application.

We equip $H$ with the random-cluster measure and the prescribed positive edge activities. For each branch, define
\[
    C_i
    :=
    \mathbf 1
    \left\{
        \text{there is an open path from }x\text{ to }\partial_{\mathrm w}
        \text{ using only edges of }H_i
    \right\}.
\]
Equivalently, before the identifications are made, $C_i$ records whether $x_i$ is connected to $b_i$ inside $H_i$. Let $Q_i^0$ and $Q_i^1$ be the restricted partition functions of the isolated branch $H_i$ according to whether $x_i$ and $b_i$ are disconnected or connected, respectively. These partition functions use the full random-cluster weights on $H_i$, with the component count taken in the isolated branch before the terminal copies are identified across branches. If $Q_i^0=0$, then only the connected state is possible and $C_i=1$ deterministically. If $Q_i^1=0$, then only the disconnected state is possible and $C_i=0$ deterministically. Such nonrandom coordinates may be omitted. For every remaining branch, $Q_i^0,Q_i^1>0$, and we set $\rho_i:=Q_i^1/Q_i^0.$

The joint law of $C=(C_1,\ldots,C_m)$ is an external-field tilt of an exchangeable law whose normalized rank sequence is log-concave; Pemantle's exchangeable criterion will therefore give the stronger CNA+ conclusion below.

\begin{proposition}[CNA+ for parallel branch connectivity]
\label{prop:parallel-branch-cna}
Let $0<q<1$. Under the random-cluster measure on the parallel union of $H_1,\ldots,H_m$, the vector $C=(C_1,\ldots,C_m)$ has the CNA+ property.
\end{proposition}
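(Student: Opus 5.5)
Our plan is to compute the joint law of $C$ explicitly and recognize it as an external-field tilt of an exchangeable measure. Fix $c\in\{0,1\}^m$ and sum the random-cluster weight on $H$ over all configurations with $C=c$. Inside each branch the edge weights factor. The only coupling between branches comes from the cluster count after the identifications $x_1=\dots=x_m=x$ and $b_1=\dots=b_m=\partial_{\mathrm w}$.

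The bookkeeping goes as follows. Before gluing, each isolated branch has its own cluster count, and $\sum_i\kappa(H_i)$ counts the clusters of $x_i$ and $b_i$ separately. Gluing the $m$ copies of $x$ into one vertex, and the $m$ copies of $\partial_{\mathrm w}$ into one vertex, merges clusters. If $k:=|c|=\sum_i c_i\ge1$, all clusters containing some $x_i$ or $b_i$ merge into a single cluster. If $k=0$, they merge into exactly two clusters. The clusters of a branch containing its terminals number $1$ if $c_i=1$ and $2$ if $c_i=0$, so these clusters total $2m-k$ before gluing. The correction exponent is therefore $1-(2m-k)=k+1-2m$ when $k\ge1$, and $2-2m$ when $k=0$. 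The weight of $C=c$ is then proportional to
\[
  \prod_{i}Q_i^{c_i}\,q^{\,|c|+1-2m}\,q^{\mathbf 1\{|c|=0\}},
\]
up to normalization, which gives
\[
  \mu(C=c)\propto \prod_i (q\rho_i)^{c_i}\cdot w_{|c|},\qquad w_0=q,\quad w_k=1\ (k\ge1).
\]
So the law of $C$ is the tilt by fields $a_i=q\rho_i>0$ of the exchangeable measure $\pi$ with $\pi(c)\propto w_{|c|}$. Branches with $Q_i^0=0$ or $Q_i^1=0$ are deterministic coordinates; pinning them only multiplies by constants, so we drop them.

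Next we invoke Pemantle's criterion for exchangeable measures: if the rank sequence $r_k:=\binom mk w_k$ is ultra-log-concave, i.e.\ $w_k^2\ge w_{k-1}w_{k+1}$ with no internal zeros, then $\pi$ is CNA+, and CNA+ is preserved by external fields and projections by definition. For $k\ge2$ we have $w_k^2=1=w_{k-1}w_{k+1}$. At $k=1$ we need $w_1^2=1\ge w_0w_2=q$, which holds exactly because $q<1$ (with equality at $q=1$, the independent case). Hence $(w_k)$ is log-concave, so $r_k/\binom mk$ is log-concave, which is ultra-log-concavity.

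The main obstacle is the exact cluster-count bookkeeping above. It must be checked that the correction depends on $c$ only through $|c|$ and the indicator $|c|=0$, including in degenerate cases such as $x_i,b_i$ adjacent or branches with extra internal clusters; those internal clusters are unaffected by gluing. A second point needing care is that the precise form of Pemantle's theorem (ultra-log-concave rank sequence implies CNA+ for exchangeable measures, as in \cite{pemantle2000towards}) applies to the sequence $w_k$ as normalized here. If only a weaker version of that theorem is available, we would instead verify that the generating polynomial $\sum_k w_k e_k(z)$ is handled by the same exchangeable argument after conditioning, noting that pinning coordinates yields another exchangeable-tilt of the same form with a shifted sequence $(w_{k+j})_k$, which is still log-concave.
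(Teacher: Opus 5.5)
Your proposal is correct and follows essentially the same route as the paper. Both proofs use the same cluster-count bookkeeping to exhibit the law of $C$ as a positive external-field tilt of an exchangeable measure, check log-concavity of the normalized rank sequence, where only the index $k=1$ is strict and this uses $q<1$, and then invoke Pemantle's exchangeable ULC criterion together with closure of CNA+ under tilts. The only difference is cosmetic: your base sequence $(q,1,\ldots,1)$ with fields $q\rho_i$ and the paper's $(1,1,q,\ldots,q^{m-1})$ with fields $\rho_i$ differ by a uniform field $q$.
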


\begin{proof}
For a probability measure $\nu$ on $\{0,1\}^m$ and a coordinate vector $C=(C_1,\ldots,C_m)$, its \emph{rank sequence} is $R_k:=\nu\bigl(|C|=k\bigr)$ for $0\le k\le m$. If $\nu$ is exchangeable, then every vector $c\in\{0,1\}^m$ with $|c|=k$ has probability $R_k/\binom{m}{k}$. The rank sequence is called \emph{ultra-log-concave} (ULC) when the normalized sequence $\left(R_k/\binom{m}{k}\right)_{k=0}^m$ has no internal zeros and is log-concave. Here, having no internal zeros means that the indices $k$ for which $R_k>0$ form an interval, while log-concavity means
\[
    \left(
        \frac{R_k}{\binom{m}{k}}
    \right)^2
    \ge
    \frac{R_{k-1}}{\binom{m}{k-1}}
    \frac{R_{k+1}}{\binom{m}{k+1}},
    \qquad 1\le k\le m-1.
\]

Pemantle~\cite[Theorem~2.7]{pemantle2000towards} proves that, for an exchangeable binary measure, the rank sequence is ULC if and only if the measure is CNA+. In that theorem, CNA means negative association after every positive-probability coordinate conditioning, while the $+$ requires this property after arbitrary external-field tilts and projections. Thus its conclusion implies the CNA+ convention used here. We use only the implication from ULC to CNA+.

Condition on the terminal-connectivity state of every branch. Before the terminal copies are identified, the restricted partition functions factor over the branches. The component-count correction created by the two identifications depends only on the number $|C|:=\sum_{i=1}^m C_i$ of connected branches.

Before gluing, each disconnected branch contributes two terminal-containing components, while each connected branch contributes one, giving $2m-|C|$ components in total. If $|C|=0$, the two identifications leave two terminal-containing components and reduce the component count by $2m-2$. If $|C|\ge1$, at least one branch connects the common terminals, so all terminal-containing components merge into one and the reduction is $2m-|C|-1$. Thus the correction relative to the case $|C|=0$ is therefore $q^{|C|-1}$. Accordingly, define
\[
    g(k)
    :=
    \begin{cases}
        1, & k=0,\\
        q^{k-1}, & k\ge1.
    \end{cases}
\]
Consequently,
\begin{equation}
\label{eq:parallel-branch-law}
    \mathbb P(C=c)
    \propto
    \left(
        \prod_{i=1}^m \rho_i^{c_i}
    \right)
    g(|c|),
    \qquad
    c\in\{0,1\}^m.
\end{equation}

The case $m=1$ is immediate, so assume $m\ge2$. First set $\rho_i=1$ for every $i$, and denote the resulting exchangeable law by $\nu$. If $(R_k)_{k=0}^m$ is its rank sequence, then~\eqref{eq:parallel-branch-law} gives
\[
    \frac{R_k}{\binom{m}{k}}
    \propto
    g(k),
    \qquad
    (g(k))_{k=0}^m
    =
    (1,1,q,q^2,\ldots,q^{m-1}).
\]
Since $0<q<1$, the sequence $(g(k))_{k=0}^m$ is positive and log-concave. Indeed, $g(1)^2=1>q=g(0)g(2)$, whereas $g(k)^2=g(k-1)g(k+1)$ for $2\le k\le m-1$. Thus $\nu$ is exchangeable and strictly positive, and its normalized rank sequence is log-concave with no internal zeros. Hence $(R_k)_{k=0}^m$ is ULC, and the exchangeable criterion recalled above implies that $\nu$ is CNA+.

For general positive $\rho_1,\ldots,\rho_m$, the factor $\prod_i\rho_i^{c_i}$ in~\eqref{eq:parallel-branch-law} is a positive external-field tilt of $\nu$. The CNA+ property is preserved under such tilts, so the general law is also CNA+. Adjoining deterministic coordinates preserves CNA+ and does not affect this conclusion.
\end{proof}

\begin{remark}
For $m\ge3$, the exchangeable base measure $\nu$ in the proof is not strongly Rayleigh. Indeed, up to a positive normalizing constant, its rank generating polynomial is
\[
    P(z)
    =
    \sum_{k=0}^m
    \binom{m}{k}g(k)z^k
    =
    1+\frac{(1+qz)^m-1}{q}.
\]
Its zeros satisfy
\[
    1+qz
    =
    (1-q)^{1/m}
    \exp\left(\frac{2\pi i j}{m}\right),
    \qquad
    0\le j\le m-1,
\]
and hence some are nonreal when $m\ge3$. By the exchangeable strong-Rayleigh criterion~\cite[Theorem~3.8]{borceaBrandenLiggett2009}, which follows from the Grace--Walsh--Szeg\H{o} coincidence theorem, $\nu$ is therefore not strongly Rayleigh. Since positive external-field tilts are invertible rescalings of the variables, the same conclusion holds for the law in~\eqref{eq:parallel-branch-law} when all coordinates are nondegenerate. Thus the use of Pemantle's exchangeable ULC criterion genuinely goes beyond the strong-Rayleigh framework.
\end{remark}

\subsection{Conditioning on terminal connectivity}

To lift Proposition~\ref{prop:parallel-branch-cna} from connectivity indicators to arbitrary increasing branch observables, we use the series-parallel class arising from the wired-tree geometry. A \emph{two-terminal network} is a finite graph $H$ with two distinguished vertices, denoted by $s$ and $t$.

Let $H_1$ and $H_2$ be vertex-disjoint two-terminal networks with terminals $(s_1,t_1)$ and $(s_2,t_2)$, respectively. Their \emph{series composition}, denoted by $H_1\circ H_2$, is obtained by identifying $ t_1=s_2$, and taking $s_1$ and $t_2$ as the two terminals of the resulting network. Thus a terminal path must pass through $H_1$ and then through $H_2$.

Their \emph{parallel composition}, denoted by $H_1\parallel H_2$, is obtained by identifying $s_1=s_2=:s$ and $t_1=t_2=:t$. The two subnetworks then give alternative routes between the common terminals $s$ and $t$.

A \emph{two-terminal series-parallel network} is obtained from a single edge by finitely many series and parallel compositions. We also allow a finite graph to be attached to the series-parallel core through a single articulation vertex. Such an attachment does not create an additional path between the two terminals and therefore does not change the terminal-connectivity structure.

To see how the wired-tree branches fit this definition, fix an internal vertex $x$ of the finite tree and one component of $T\setminus\{x\}$. Let $y$ be the unique vertex of this component adjacent to $x$, and identify all leaves in the component with the wired vertex $\partial_{\mathrm w}$. The edge $\{x,y\}$ is placed in series with the network below $y$. Recursively, at each vertex $v$, the edge from $v$ to each child $u$ is placed in series with the network below $u$, and the resulting child networks are placed in parallel between $v$ and $\partial_{\mathrm w}$. 

Starting from the edge of the branch incident to $x$ and recursively following this decomposition toward $\partial_{\mathrm w}$ shows that the entire branch is a two-terminal series-parallel network with terminals $x$ and $\partial_{\mathrm w}$. The allowance for single-vertex attachments is a harmless generalization that covers additional finite decorations without altering this terminal-connectivity recursion.

For probability measures $\nu_1$ and $\nu_2$ on the same edge configuration space, we write $\nu_1\succeq\nu_2$ when $\nu_1$ stochastically dominates $\nu_2$, meaning that $\mathbb E_{\nu_1}[F]\ge\mathbb E_{\nu_2}[F]$ for every bounded increasing function $F$.

\begin{lemma}
\label{lem:terminal-conditioning-order}
Let $H$ be a finite two-terminal series-parallel network with terminals $s$ and $t$. Finite graphs attached to the series-parallel core through a single vertex are also allowed. Let $\mu_H^1$ and $\mu_H^0$ denote the random-cluster laws on $H$ conditioned on $s\leftrightarrow t$ and $s\nleftrightarrow t$, respectively. If both conditioning events have positive probability, then $\mu_H^1\succeq\mu_H^0$. This holds for every $q>0$ and for arbitrary positive edge activities.
\end{lemma}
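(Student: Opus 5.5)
Stochastic domination will be proved by structural induction on the series--parallel decomposition. The claim is strengthened to a conditional form that is stable under composition: for every network $H$ in the class, arbitrary positive activities and $q>0$, we show $\mu_H^1\succeq\mu_H^0$ whenever both conditionings are nondegenerate. If one conditioning event has probability zero, the subnetwork's terminal state is deterministic and plays no role below.

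\emph{Base case and pendant attachments.} For a single edge, $\mu^1$ is the point mass on ``open'' and $\mu^0$ the point mass on ``closed'', so domination is trivial. For a graph $G$ attached to the core at one articulation vertex, the random-cluster weight factorizes: the cluster count is additive up to a constant once the shared vertex is accounted for. Thus the configuration on $G$ is independent of the core, and its law does not depend on the terminal event. The conditional laws are therefore (core conditional law) $\otimes$ (law on $G$), and domination on the core lifts to the product.

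\emph{Parallel and series steps.} Let $H=H_1\parallel H_2$ or $H_1\circ H_2$, and let $C_j$ be the terminal indicators of the pieces. As in the proof of Proposition~\ref{prop:parallel-branch-cna}, conditional on $(C_1,C_2)$ the configurations on $H_1,H_2$ are independent with laws $\mu_{H_j}^{C_j}$. The gluing correction depends only on $(C_1,C_2)$, so the joint law of $(C_1,C_2)$ is $\propto \rho_1^{c_1}\rho_2^{c_2}g(c_1,c_2)$ for an explicit $g$. In the parallel case $g(0,0)=1$ and $g=q^{c_1+c_2-1}$ otherwise. In the series case $g(c_1,c_2)=q^{-c_1c_2}$ up to a constant, since connecting both halves saves one component.

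Hence $\mu_H^i$ is a mixture $\sum_c \pi^i(c)\,\mu_{H_1}^{c_1}\otimes\mu_{H_2}^{c_2}$. The conditioning event is $\{c_1\vee c_2=1\}$ or $\{c_1=1\}\cup\{c_2=1\}$ in the parallel case, and $\{c_1c_2=1\}$ versus its complement in the series case. By induction, $c\mapsto \mu_{H_1}^{c_1}\otimes\mu_{H_2}^{c_2}$ is increasing in $c$ for stochastic order, since products of dominated laws are dominated. It therefore suffices to couple $\pi^1$ and $\pi^0$ on $\{0,1\}^2$ with $c^{(1)}\ge c^{(0)}$ coordinatewise, and then compose couplings.

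For the series case, $\pi^1=\delta_{(1,1)}$ dominates everything, so this is immediate. For the parallel case, $\pi^0=\delta_{(0,0)}$ is dominated by anything. Both comparisons are therefore automatic, and no log-concavity of $g$ is needed. This explains why the lemma holds for all $q>0$.

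The main obstacle is the bookkeeping in the mixture representation. One must check that, conditional on $(C_1,C_2)$, the two pieces really are independent with laws exactly $\mu_{H_j}^{C_j}$, including the degenerate cases where some $\mu_{H_j}^{c}$ is undefined. Those values of $c$ have $\pi^i(c)=0$, so they can be dropped, but this needs to be verified. One must also check that the gluing weight depends on the configuration only through $(C_1,C_2)$. For series composition, this is the observation that the glued vertex $t_1=s_2$ merges exactly the cluster of $t_1$ in $H_1$ with that of $s_2$ in $H_2$, a merge always counted once. The terminal connection then occurs iff $C_1=C_2=1$. The parallel case is the two-branch instance of the count already done in Proposition~\ref{prop:parallel-branch-cna}.
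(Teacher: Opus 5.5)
Your proof is correct and is essentially the paper's argument. Both proceed by induction over the series--parallel construction, using four facts: conditional on the piece indicators, the pieces are independent with laws $\mu_{H_j}^{c_j}$; series connection is the single top state and parallel disconnection the single bottom state; pendant attachments factor off; and the paper's appeal to FKG for $q\ge1$ is optional, since its induction, like yours, never uses $q<1$.

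One harmless slip: in the series case the gluing correction is constant, not $q^{-c_1c_2}$. Identifying $t_1=s_2$ always merges exactly two distinct clusters, so $\kappa=\kappa_1+\kappa_2-1$, as your own final paragraph notes. Hence $C_1$ and $C_2$ are independent there. This does not affect the argument, which uses only $\pi^1=\delta_{(1,1)}$ in that case.
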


\begin{proof}
For $q\ge1$, the result follows directly from FKG because $\{s\leftrightarrow t\}$ is increasing. For $0<q<1$, we argue by induction over the series-parallel construction. For a single edge, the connected law is the point mass on the open state and the disconnected law is the point mass on the closed state.

Suppose first that $H=H_1\circ\cdots\circ H_\ell$ is a series composition. Let $C_i$ indicate terminal connection in $H_i$. The global terminals of $H$ are connected exactly when $C_1=\cdots=C_\ell=1$. 
For either a series or a parallel composition and any fixed vector $c=(c_1,\ldots,c_\ell)$, the component count of the glued configuration equals the sum of the component counts in the subnetworks plus a correction depending only on $c$. Since the edge weights factor over the subnetworks, conditional on $(C_1,\ldots,C_\ell)=c$ their configurations are independent, with law $\bigotimes_i\mu_{H_i}^{c_i}$.

Thus global terminal connection corresponds to the single state $(C_1,\ldots,C_\ell)=(1,\ldots,1)$, whereas global terminal disconnection is a mixture over all other states. By the induction hypothesis, we obtain
\[
    \bigotimes_{i=1}^\ell\mu_{H_i}^1
    \succeq
    \bigotimes_{i=1}^\ell\mu_{H_i}^{c_i}
\]
for every $c\in\{0,1\}^\ell$ with $c\neq(1,\ldots,1)$. Indeed, for an increasing function on the product configuration space, replacing the factors one at a time by stochastically larger measures can only increase its expectation. It therefore dominates their mixture, which is the law conditioned on global terminal disconnection.

Suppose next that $H=H_1\parallel\cdots\parallel H_\ell$ is a parallel composition. Global terminal disconnection is equivalent to $(C_1,\ldots,C_\ell)=(0,\ldots,0)$, whereas global connection is equivalent to the complementary event. For every nonzero state $c$, the induction hypothesis gives
\[
    \bigotimes_{i=1}^\ell\mu_{H_i}^{c_i}
    \succeq
    \bigotimes_{i=1}^\ell\mu_{H_i}^{0}.
\]
By the conditional factorization above, the connected law is a mixture of the product laws on the left, while the disconnected law is the product law on the right. This proves the assertion for the parallel composition.

Finally, if a graph $K$ is attached to the series-parallel core $H'$ through one vertex, then
\[
    \kappa_{H'\vee K}(\omega',\omega_K)
    =
    \kappa_{H'}(\omega')+\kappa_K(\omega_K)-1.
\]
Hence the random-cluster weight factorizes up to a constant independent of the terminal-connectivity state of $H'$. The attachment can therefore be coupled identically under the two conditional laws. This completes the induction.
\end{proof}

\subsection{Negative association between tree branches}

We now return to the tree setting described at the beginning of the section. Let $T$ be a finite tree with all leaves in one wired boundary class, and let $\mu_T^{\mathbf 1}$ be its random-cluster measure with this all-wired boundary condition. Fix an internal vertex $x$. After the wired leaves are identified as the vertex $\partial_{\mathrm w}$, the components incident to $x$ become two-terminal series-parallel branches $H_1,\ldots,H_m$ in parallel between $x$ and $\partial_{\mathrm w}$.

\begin{theorem}[Branch-separated negative association]
\label{thm:branch-separated-na}
Let $0<q<1$, and let $I,J\subseteq[m]$ be disjoint. Suppose that
\[
    F
    =
    F\left(
        \omega_{\bigcup_{i\in I}E(H_i)}
    \right),
    \qquad
    G
    =
    G\left(
        \omega_{\bigcup_{j\in J}E(H_j)}
    \right)
\]
are bounded increasing functions. Then $\operatorname{Cov}_{\mu_T^{\mathbf 1}}(F,G)\le0$.
\end{theorem}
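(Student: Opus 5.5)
We will prove the inequality through the law of total covariance, conditioning on the branch-connectivity vector $C=(C_1,\ldots,C_m)$ from Proposition~\ref{prop:parallel-branch-cna}.

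\emph{Step 1: conditional independence.} Under $\mu_T^{\mathbf 1}$, viewed as the random-cluster measure on the parallel union of $H_1,\ldots,H_m$, the cluster-count correction created by gluing the branches depends only on $C$. This is the computation already used in the proof of Proposition~\ref{prop:parallel-branch-cna}. Consequently, conditional on $C=c$, the branch configurations are independent, and the law on $H_i$ is $\mu_{H_i}^{c_i}$, the isolated-branch law conditioned on terminal connection or disconnection. Since $I\cap J=\varnothing$, $F$ and $G$ are conditionally independent given $C$. Therefore $\operatorname{Cov}(F,G\mid C)=0$, and
\[
    \operatorname{Cov}_{\mu_T^{\mathbf 1}}(F,G)
    =
    \operatorname{Cov}\bigl(\bar F(C_I),\bar G(C_J)\bigr),
\]
where $\bar F(c_I):=\mathbb E_{\bigotimes_{i\in I}\mu_{H_i}^{c_i}}[F]$ and $\bar G$ is defined analogously. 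In particular, $\bar F$ depends only on the coordinates in $I$, and $\bar G$ only on those in $J$.

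\emph{Step 2: monotonicity of the averaged observables.} Each branch is a two-terminal series--parallel network, as explained before Lemma~\ref{lem:terminal-conditioning-order}. That lemma gives $\mu_{H_i}^1\succeq\mu_{H_i}^0$ whenever both states are possible. Suppose we raise one coordinate $c_i$ from $0$ to $1$. Then exactly one factor of the product measure is replaced by a stochastically larger one. Because $F$ is increasing on the product configuration space, $\bar F$ does not decrease. Hence $\bar F$ and $\bar G$ are increasing functions of $C_I$ and $C_J$.

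Coordinates $C_i$ that are deterministic, meaning $Q_i^0=0$ or $Q_i^1=0$, need separate handling. For these, $\bar F$ is defined only at the single possible value of $c_i$. We therefore freeze such coordinates before the comparison, and they play no role in Step 3.

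\emph{Step 3: negative association of $C$.} Proposition~\ref{prop:parallel-branch-cna} shows that $C$ is CNA+, and in particular NA. Applying this to the increasing functions $\bar F(C_I)$ and $\bar G(C_J)$, which have disjoint supports $I$ and $J$, gives
\[
    \operatorname{Cov}\bigl(\bar F,\bar G\bigr)\le0.
\]
Combined with Step 1, this proves the theorem.

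\emph{Expected main obstacle.} The delicate point is Step 1. We must verify carefully that the all-wired tree measure, restricted to the branch decomposition at $x$, really is the parallel-union measure with a gluing correction depending only on $|C|$. This requires the branches to share only the vertices $x$ and $\partial_{\mathrm w}$. It also requires that the wired class, which appears inside each isolated branch's component count, is handled consistently, just as the constant factor $q^{1-m}$ was handled in Corollary~\ref{cor:local-wsm}. Once this factorization is in place, the conditional laws are exactly the products $\bigotimes_i\mu_{H_i}^{c_i}$, and the remaining steps are formal.
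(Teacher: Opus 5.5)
Your proposal is correct and follows the paper's proof essentially step for step. You condition on $C$, use the conditional product law $\bigotimes_i\mu_{H_i}^{c_i}$ to get $\operatorname{Cov}(F,G\mid C)=0$, obtain monotonicity of $\mathbb E[F\mid C]$ and $\mathbb E[G\mid C]$ from Lemma~\ref{lem:terminal-conditioning-order}, and conclude with negative association of $C$ from Proposition~\ref{prop:parallel-branch-cna} and the law of total covariance; your separate treatment of deterministic coordinates is harmless but unnecessary, since, as the paper notes, every branch of the wired tree contains a path from $x$ to $\partial_{\mathrm w}$, so both values of each $C_i$ have positive probability.
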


\begin{proof}
For each branch, let $C_i:=\mathbf 1\left\{H_i\text{ connects }x\text{ to }\partial_{\mathrm w}\right\}$. Since each branch contains a path between its terminals and all edge activities are positive, both states $C_i=0$ and $C_i=1$ have positive probability. Conditionally on $C=(C_1,\ldots,C_m)$, the branch configurations are independent, and the conditional law in $H_i$ is $\mu_{H_i}^{C_i}$. It follows that $\operatorname{Cov}(F,G\mid C)=0$.

By the same conditional product factorization, $\mathbb E[F\mid C]$ depends only on $C_I$, while $\mathbb E[G\mid C]$ depends only on $C_J$. Define these functions by $\overline F(C_I):=\mathbb E[F\mid C]$ and $\overline G(C_J):=\mathbb E[G\mid C]$. By Lemma~\ref{lem:terminal-conditioning-order}, $\overline F$ is increasing in $C_I$, and $\overline G$ is increasing in $C_J$. Indeed, changing $C_i$ from $0$ to $1$ replaces $\mu_{H_i}^0$ by the stochastically larger law $\mu_{H_i}^1$, so the expectation of an increasing observable cannot decrease. Proposition~\ref{prop:parallel-branch-cna} says that $C$ is CNA+, and hence negatively associated. Since $I\cap J=\varnothing$, $\operatorname{Cov}\bigl(\overline F(C_I),\overline G(C_J)\bigr)\le0$.
The law of total covariance now yields
\[
    \operatorname{Cov}(F,G)
    =
    \mathbb E
    \left[
        \operatorname{Cov}(F,G\mid C)
    \right]
    +
    \operatorname{Cov}
    \left(
        \mathbb E[F\mid C],
        \mathbb E[G\mid C]
    \right)
    \le0.
\]
\end{proof}

Uniqueness identifies the local limit of all-wired balls with the wired Gibbs state. It therefore lets us pass the finite branch-separated covariance inequality to the infinite-volume measure.

For a vertex $x\in\mathcal T_\infty$ and a neighbor $y$ of $x$, let $\mathcal E_x(y)$ consist of the edge $\{x,y\}$ together with all edges in the component of $\mathcal T_\infty\setminus\{x\}$ containing $y$. We call the sets $\mathcal E_x(y)$ the edge branches incident to $x$. 

\begin{corollary}[Branch-separated NA in the wired Gibbs state]
\label{cor:infinite-branch-na}
Let $0<q<1$, $\Delta\ge3$, and $p\in(0,1)$, and let $\mu^*$ be
the unique Gibbs measure for the wired DLR specification on
$\mathcal T_\infty$. Let $F$ and $G$ be bounded increasing cylinder functions. Suppose that there exist a vertex $x\in\mathcal T_\infty$ and disjoint sets $I,J$ of neighbors of $x$ such that
\[
    \operatorname{supp}(F)
    \subseteq
    \bigcup_{y\in I}\mathcal E_x(y),
    \qquad
    \operatorname{supp}(G)
    \subseteq
    \bigcup_{y\in J}\mathcal E_x(y).
\]
Then $\operatorname{Cov}_{\mu^*}(F,G)\le0$. Moreover, if $p\le\pc$, then $\operatorname{Cov}_{\mu^*}(F,G)=0$.
\end{corollary}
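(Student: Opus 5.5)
Two cases. For $p\le\pc$, Theorem~\ref{thm:subcritical-uniqueness} identifies $\mu^*$ with the product measure $\bigotimes_e\operatorname{Ber}(\widehat p)$. Since $I\cap J=\varnothing$, the edge branches $\mathcal E_x(y)$, $y\in I$, are disjoint from those with $y\in J$. Hence $F$ and $G$ depend on disjoint coordinate sets and are independent, so the covariance vanishes.

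For $p>\pc$, Theorem~\ref{thm:uniqueness} identifies $\mu^*$ with the local limit of $\mu_{\dreg}^{\mathbf 1}$ from Lemma~\ref{lem:wired-limit}. I would approximate $\mu^*$ by finite wired balls and apply Theorem~\ref{thm:branch-separated-na} in each. Local convergence does not depend on the centering, so I center the balls so that $x$ is an internal vertex. Concretely, take $h$ large enough that $\dreg$ contains $x$ in its interior and contains the finitely many edges in $\operatorname{supp}(F)\cup\operatorname{supp}(G)$. In the wired quotient of $T:=\dreg$, the components of $T\setminus\{x\}$ are exactly the truncations $\mathcal E_x(y)\cap E(T)$ for the $\Delta$ neighbors $y$ of $x$. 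Each is a two-terminal branch $H_y$ between $x$ and $\partial_{\mathrm w}$, since every such component reaches leaves when $h$ exceeds $\dist(\rho,x)$.

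The supports of $F$ and $G$ lie in $\bigcup_{y\in I}E(H_y)$ and $\bigcup_{y\in J}E(H_y)$, respectively, and these unions are disjoint. Viewed as functions of the finite configuration on $E(T)$, $F$ and $G$ remain bounded and increasing. Theorem~\ref{thm:branch-separated-na} therefore gives $\operatorname{Cov}_{\mu_T^{\mathbf 1}}(F,G)\le0$ for every large $h$.

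Now let $h\to\infty$. The functions $F$, $G$, and $FG$ are bounded cylinder functions, so local convergence gives convergence of $\mathbb E[F]$, $\mathbb E[G]$, and $\mathbb E[FG]$. Hence $\operatorname{Cov}_{\mu_{\dreg}^{\mathbf 1}}(F,G)\to\operatorname{Cov}_{\mu^*}(F,G)$, and the limit is $\le0$.

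The main point to check is that the truncated branches in the ball match the abstract setting of Theorem~\ref{thm:branch-separated-na}. Specifically, each $H_y$ must be a genuine branch containing a terminal path. It must also be a series--parallel network, which the recursive decomposition preceding Lemma~\ref{lem:terminal-conditioning-order} verifies. Finally, the measure must be the all-wired one, and it is because the leaves of $\dreg$ form a single wired class. The covariance step itself is routine once these are in place.
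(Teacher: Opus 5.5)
Your proof is correct and follows the paper's argument: the product-measure case for $p\le\pc$, and for $p>\pc$ an application of Theorem~\ref{thm:branch-separated-na} on finite all-wired balls, followed by passing $\mathbb E[F]$, $\mathbb E[G]$, $\mathbb E[FG]$ to the local limit. The only difference is that the paper first uses uniqueness to make $\mu^*$ automorphism-invariant and recenters the balls at $x$. You instead keep the $\rho$-centered balls of Lemma~\ref{lem:wired-limit} and apply the finite theorem at the off-center internal vertex $x$, which is legitimate. This makes the recentering step unnecessary, as well as your remark that local convergence does not depend on the centering.
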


\begin{proof}
Since the infinite $\Delta$-regular tree and the wired specification are invariant under graph automorphisms, and since the wired Gibbs measure is unique, $\mu^*$ is automorphism-invariant. We may therefore relabel the tree so that $x$ is its root. We split the proof into the supercritical and the subcritical--critical regimes.

Suppose first that $p>\pc$. For all sufficiently large $h$, let $\dreg$ denote the ball of radius $h$ centered at $x$. The supports of $F$ and $G$ are contained in $\dreg$. Every component incident to $x$ inside $\dreg$ contains wired leaves, and hence Theorem~\ref{thm:branch-separated-na} gives 
\[
    \operatorname{Cov}_{\mu_{\dreg}^{\mathbf 1}}(F,G)
    \le0.
\]
Local convergence $\mu_{\dreg}^{\mathbf 1}\to\mu^*$ as $h\to\infty$ and boundedness of the cylinder functions allow us to pass to the limit, proving the assertion when $p>\pc$.

If $p\le\pc$, Theorem~\ref{thm:subcritical-uniqueness} identifies
$\mu^*$ as a Bernoulli product measure. Since $F$ and
$G$ have disjoint edge supports, they are independent, and hence their
covariance is zero.

Combining the two regimes completes the proof.
\end{proof}
\begin{remark}
Theorem~\ref{thm:branch-separated-na} does not imply full negative association for arbitrary disjoint edge sets. The argument requires a vertex whose incident branches separate the supports of the two observables. Such a vertex need not exist for general disjoint supports.
\end{remark}

\begin{acks}[Disclosure of generative AI use]
OpenAI's ChatGPT was used to assist with English-language editing, exposition, and pre-submission review of the manuscript. All mathematical ideas, arguments, and results are the author's own, and the author takes full responsibility for the contents of the manuscript.
\end{acks}

\bibliographystyle{plain}
\bibliography{bib}

\end{document}